\documentclass[a4paper, 9pt]{article}
\usepackage{graphics}
\usepackage[dvipdfmx]{hyperref}
\usepackage[backend=bibtex]{biblatex}
\bibliography{kmref.bib}
\usepackage{color}

\usepackage[normalem]{ulem}
\usepackage{lscape}
\usepackage{latexsym}
\usepackage{amsmath,verbatim}
\usepackage{graphics}
\usepackage{amsthm}
\usepackage{amssymb}
\usepackage[mathscr]{euscript}
\usepackage{yfonts}
\usepackage{makeidx}
\usepackage{stmaryrd}
\usepackage{multicol}
\usepackage{bm}
\usepackage[all]{xy}
\numberwithin{equation}{section}
\newtheorem{thm}{Theorem}[section]
\newtheorem{prop}[thm]{Proposition}
\newtheorem{lem}[thm]{Lemma}
\newtheorem{cor}[thm]{Corollary}
\newtheorem{claim}{Claim}{\bf}{\it}

\newtheorem{fthm}{Theorem}{\bf}{\it}
{\bf}{\it}
{\bf}{\it}
{\bf}{\it}
{\bf}{\it}

\theoremstyle{definition}
\newtheorem{defn}[thm]{Definition}

{\bf}{\rm}

\theoremstyle{remark}

\newtheorem{rem}[thm]{Remark}
\newtheorem{frem}[fthm]{Remark}{\bf}{\it}

\newtheorem{definition and corollary}[thm]{Definition and Corollary}

{\it}{\rm}

\newcommand{\cA}{{\mathcal A}}

\newcommand{\al}{\alpha}

\newcommand{\C}{{\mathbb C}}

\newcommand{\cP}{{\mathcal P}}

\newcommand{\la}{\lambda}
\newcommand{\La}{\Lambda}

\newcommand{\Spec}{\mbox{\rm Spec}}

\newcommand{\tI}{\mathtt{I}}

\newcommand{\Q}{\mathbb{Q}}

\newcommand{\Z}{\mathbb{Z}}

\title{Symmetric functions and Springer representations\footnote{MSC2010: 14N15,20G44}}
\author{Syu \textsc{Kato}\footnote{Department of Mathematics, Kyoto University, Oiwake Kita-Shirakawa Sakyo Kyoto 606-8502 JAPAN \tt{E-mail:syuchan@math.kyoto-u.ac.jp}}}

\begin{document}
\maketitle

\begin{center}
{\bf Dedicated to the memory of Tonny Albert Springer}
\end{center}

\medskip

%

\begin{abstract}
The characters of the (total) Springer representations are identified with the Green functions by Kazhdan [Israel J. Math. {\bf 28} (1977)], and the latter are identified with Hall-Littlewood's $Q$-functions by Green [Trans. Amer. Math. Soc. (1955)]. In this paper, we present a purely algebraic proof that the (total) Springer representations of $\mathop{GL} ( n )$ are $\mathrm{Ext}$-orthogonal to each other, and show that it is compatible with the natural categorification of the ring of symmetric functions.
\end{abstract}

\section*{Introduction}
Let $G$ be a connected reductive algebraic group over an algebraically closed field with a Borel subgroup $B$. Let $W$ be the Weyl groups of $G$, and let $\mathcal N \subset \mathrm{Lie} \, G$ denote the variety of nilpotent elements. The cohomology of a fiber of the Springer resolution
$$\mu : T^* ( G / B ) \longrightarrow \mathcal N,$$
affords a representation of $W$. This is widely recognized as the Springer representation \cite{Spr76}, and it is proved to be an essential tool in representation theory of finite and $p$-adic Chevalley groups \cite{Lus84,KL87,Lus88,Lus90b,Kat09}. Here and below, we understand that the Springer representation refers to the {\it total} cohomology of a Springer fiber instead of the top cohomology, commonly seen in the literature. 

In \cite{Kat15}, we found a module-theoretic realization of Springer representations that is axiomatized as Kostka systems. For $G = \mathop{GL} ( n )$, it takes the following form: Let
$$A = A_n := \C \mathfrak S_n \ltimes \C [X_1,\ldots,X_n]$$
be a graded ring obtained by the smash product of the symmetric group $\mathfrak S_n$ and a polynomial algebra $\C [X_1,\ldots,X_n]$ such that $\deg \, \mathfrak S_n = 0$ and $\deg \, X_i = 1$ ($1 \le i \le n$). Let $A \mathchar`-\mathsf{gmod}$ be the category of finitely generated graded $A$-modules. Let $\mathrm{hom}_{A}$, $\mathrm{end}_{A}$, and $\mathrm{ext}_{A}$ denote the graded versions of $\mathrm{Hom}_{A}$, $\mathrm{End}_{A}$, and $\mathrm{Ext}_{A}$, respectively. The set of simple graded $A$-modules is parametrized by $\mathsf{Irr} \, \mathfrak S_n$ (up to grading shift), and is denoted as $\{L_\la\}_{\la \in \mathsf{Irr} \, \mathfrak S_n}$. We have a projective cover $P_\la \rightarrow L_\la$ as graded $A$-modules.

\begin{fthm}\label{forth}
For each $\la \in \mathsf{Irr} \, \mathfrak S_n$, we have two modules $\widetilde{K}_\la$ and $K_{\la}$ in $A_n \mathchar`-\mathsf{gmod}$ with the following properties:
\begin{enumerate}
\item We have a sequence of $A_n$-module surjections $P_\la \rightarrow\!\!\!\!\!\rightarrow \widetilde{K}_\la \rightarrow\!\!\!\!\!\to K_\la \rightarrow\!\!\!\!\!\to L_\la$, where the first map is obtained by annihilating all graded Jordan-H\"older components $L_{\mu}$ such that $\mu \not\ge \la$ with respect to the dominance order on $\mathsf{Irr} \, \mathfrak S_n$;
\item The graded ring $\mathrm{end}_A ( \widetilde{K}_\la )$ is a polynomial ring. The $($unique$)$ graded quotient $\mathrm{end}_A ( \widetilde{K}_\la ) \to \C_0 \cong \C$ yields $K_\la \cong \C_0 \otimes_{\mathrm{end}_A ( \widetilde{K}_\la )} \widetilde{K}_\la$;
\item We have the following $\mathrm{ext}$-orthogonality:
$$\mathrm{ext}^i_{A}(\widetilde{K}_\la,K_\mu^*)\cong \C^{\oplus\delta_{i,0}\delta_{\la, \mu}}.$$
\end{enumerate}
\end{fthm}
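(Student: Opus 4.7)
The plan is to construct $\widetilde{K}_\la$ explicitly by parabolic induction from a Young-type subalgebra, obtain $K_\la$ by specialization, and verify the three properties in increasing order of difficulty. The geometric picture to keep in mind throughout is that $\widetilde{K}_\la$ should model the $S$-equivariant Borel--Moore homology of the Springer fiber over a nilpotent element of Jordan type $\la$, where $S$ is a maximal torus in the reductive quotient of the centralizer in $GL ( n )$; this immediately explains the polynomial ring of rank $\ell(\la)$, the number of parts of $\la$, that appears in property (2).

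For the construction, fix $\la = (\la_1 \geq \cdots \geq \la_k) \vdash n$ and let $A_\la := \mathfrak{S}_\la \ltimes \C[X_1,\ldots,X_n] \subset A_n$, where $\mathfrak{S}_\la$ is the Young subgroup. Define a graded $A_\la$-module $N_\la$ on which $\mathfrak{S}_\la$ acts through a one-dimensional character $\chi_\la$ (sign or trivial, according to the Springer normalization), and on which the variables $X_j$ belonging to the $i$-th block all act by a single fresh polynomial generator $h_i$; thus $N_\la \cong \chi_\la \otimes \C[h_1,\ldots,h_k]$ as a graded vector space after a suitable shift. Set
$$\widetilde{K}_\la := A_n \otimes_{A_\la} N_\la, \qquad K_\la := \widetilde{K}_\la / (h_1,\ldots,h_k)\widetilde{K}_\la.$$
Property (2) then follows from a Mackey-type computation: freeness of $A_n$ as a right $A_\la$-module, combined with the fact that only the identity double-coset summand of $\widetilde{K}_\la|_{A_\la}$ contains $\chi_\la$-isotypical vectors, forces $\mathrm{end}_{A_n}(\widetilde{K}_\la) \cong \C[h_1,\ldots,h_k]$, and $K_\la$ is manifestly the specialization at $h_i = 0$. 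Property (1) follows from classical symmetric-function combinatorics: the $\mathfrak{S}_n$-character of $\widetilde{K}_\la$ is essentially the induced character $\mathrm{ind}_{\mathfrak{S}_\la}^{\mathfrak{S}_n}\chi_\la = \sum_\mu K_{\mu\la} L_\mu$ multiplied by the polynomial factor, and the Kostka numbers $K_{\mu\la}$ are supported on $\mu \geq \la$.

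For property (3), exactness of induction together with adjunction collapses the computation to
$$\mathrm{ext}^i_{A_n}(\widetilde{K}_\la, K_\mu^*) \;\cong\; \mathrm{ext}^i_{A_\la}(N_\la, K_\mu^*|_{A_\la}),$$
reducing everything to an analysis of the restriction $K_\mu^*|_{A_\la}$. I would proceed by descending induction on $\mu$ in the dominance order, producing a filtration of $K_\mu^*|_{A_\la}$ whose subquotients are built from pieces of the form $N_\nu$ with $\nu \geq \mu$, in such a way that $\mathrm{ext}^\bullet_{A_\la}(N_\la, -)$ contributes exactly one non-zero term, namely in bidegree $(i,\nu) = (0,\la)$, and only when $\la = \mu$. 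The diagonal case itself uses the freeness of $\widetilde{K}_\mu$ over $\C[h_1,\ldots,h_k]$, so that $K_\mu$ is resolved by a Koszul complex of length $\ell(\mu)$, and the endomorphism computation of property (2) pins down the scalar.

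The central technical difficulty lies in producing the above filtration on $K_\mu^*|_{A_\la}$ with sufficiently rigid subquotients, and simultaneously in proving vanishing of the cross-term Ext groups $\mathrm{ext}^i_{A_\la}(N_\la, N_\nu)$ for $\nu \neq \la$ in every cohomological degree. A character-level calculation using the Kostka--Foulkes and Hall--Littlewood identities recalled in the abstract already gives the correct Euler characteristic; upgrading this to degree-wise Ext vanishing requires a careful dovetailing of the inductions on $\la$ and $\mu$ along the dominance order, together with a purity or semi-orthogonality argument ensuring that the successive filtration subquotients contribute no off-diagonal extensions. This is precisely where a purely algebraic proof diverges most substantively from a geometric argument based on sheaf-theoretic decomposition theorems.
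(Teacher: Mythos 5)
There is a genuine gap, and it sits right at the foundation: the module you propose to call $\widetilde{K}_\la$, namely $A_n \otimes_{A_\la} N_\la$, is not the Kostka system module. It is what the paper denotes $\widetilde{K}^+_\la$ (see (\ref{Kplus}), combined with Lemma \ref{Ktriv}), and it is strictly larger than the true $\widetilde{K}_\la$. Concretely, its degree-zero piece is $\mathrm{Ind}^{\mathfrak S_n}_{\mathfrak S_\la}\mathsf{triv} = \bigoplus_{\mu\ge\la}K_{\mu\la}L_\mu$, so it is generated by more than one simple and is \emph{not} a quotient of $P_\la$; for $\la=(1^n)$ your $\widetilde{K}_\la$ is all of $A_n$ rather than $P_{(1^n)}$. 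This kills property (1) outright. The correct object is the image of the unique degree-zero map $P_\la/(\ldots)\to\widetilde{K}^+_\la$ (the module $\widetilde{K}'_\la$ in \S 3 of the paper), and establishing that this image actually equals the intrinsically defined quotient $\widetilde{K}_\la$ of $P_\la$ is a major share of the work (Proposition \ref{ext-est} through Claim \ref{direct}).

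Your Mackey computation for property (2) also fails whenever $\la$ has a repeated part: the double cosets corresponding to permutations of equal-sized blocks of $\la$ \emph{do} preserve the $\chi_\la$-isotypic data, so $\mathrm{end}_{A_n}(\widetilde{K}^+_\la)$ contains the group $\mathfrak S(\la)=\prod_j\mathfrak S_{m_j(\la)}$ acting by block-permutation and is in particular non-commutative. The endomorphism ring of the correct $\widetilde{K}_\la$ is the invariant subring $B(\la)^{\mathfrak S(\la)}$, a polynomial ring on $\ell(\la)$ generators of degrees $1,2,\ldots,m_j(\la)$ rather than all degree one, with graded dimension $b_\la^{-1}$ rather than $(1-q)^{-\ell(\la)}$ (Proposition \ref{ext-est}). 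This discrepancy also propagates into (3): the multiplicity space $\mathrm{ext}^0(\widetilde{K}_\la,K_\la^*)$ is one-dimensional precisely because one has factored out $\mathfrak S(\la)$, which your set-up does not do. Finally, you explicitly leave open the vanishing of cross-term Ext groups and the rigidity of the filtration on $K_\mu^*|_{A_\la}$, and this is exactly where the real content lies; the paper handles it by a one-box-at-a-time restriction to $A_{1,n-1}$ via the Garsia--Procesi filtration (Theorem \ref{GP}) rather than restricting all the way to $A_\la$, which keeps the induction on $n$ tractable and lets the Frobenius--Nakayama reciprocity (Theorem \ref{fnr}) and the graded-dimension estimate (Theorem \ref{support}) do the heavy lifting.
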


\begin{frem}
If we identify $\la \in \mathsf{Irr} \, \mathfrak S_n$ with a partition, and hence with a nilpotent element $x_\la \in \mathcal N \subset \mathfrak{gl} ( n, \C )$ via the theory of Jordan normal form, then we have
$$K_\la \cong H^\bullet ( \mu^{-1} ( x_{\la} ), \C ) \hskip 5mm \text{and} \hskip 5mm \widetilde{K}_\la \cong H^\bullet_{\mathrm{Stab}_{\mathop{GL} ( n, \C )} (x_\la)} ( \mu^{-1} ( x_{\la} ), \C )$$
with a suitable adjustment of conventions (\cite{Kat15,Kat17}).
\end{frem}

Theorem \ref{forth} follows from works of many people (\cite{Hai01,Hai03,Xi02,KX12,Kle14,CI15,FKM19}) in several different ways as well as an exact account (\cite{Kat15, Kat17}) that works for an arbitrary $G$. All of these proofs utilize some structures (geometry, cells, or affine Lie algebras) that is hard to see in the category of graded $A$-modules.

The main goal of this paper is to give a new proof of Theorem \ref{forth} based on a detailed analysis of $K_{\la}^*$ due to Garsia-Procesi \cite{GP92} and some algebraic results from \cite{Kle14, Kat15}. This completes author's attempt \cite[Appendix A]{Kat15} to give a proof of Theorem \ref{forth} inside the category of graded $A$-modules.
 
As a byproduct, we obtain an interesting consequence: We call $M \in A \mathchar`-\mathsf{gmod}$ (resp. $M \in A \boxtimes A \mathchar`-\mathsf{gmod}$) to be $\Delta$-filtered (resp. $\overline{\Delta}$-filtered) if $M$ admits a decreasing separable filtration (resp. finite filtration) whose associated graded is isomorphic to the direct sum of $\{\widetilde{K}_\la\}_\la$ (resp. direct sum of $\{L_{\la} \boxtimes K_\mu\}_{\la,\mu}$) up to grading shifts.

\begin{fthm}[$\doteq$ Theorem \ref{indres}]\label{ffilt}
The induction of graded $A$-modules sends the external tensor product of $P_{\la}$ and a $\Delta$-filtered module to a $\Delta$-filtered module. Dually, the restriction of graded $A_n$-modules sends a $\overline{\Delta}$-filtered module of $A_n$ $(= A_0 \boxtimes A_n)$ to a $\overline{\Delta}$-filtered module of $A_{r} \boxtimes A_{n-r}$ $(0 \le r \le n)$.
\end{fthm}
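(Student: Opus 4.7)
My plan is to deduce both assertions from the $\mathrm{ext}$-orthogonality of Theorem~\ref{forth}(3) by means of Frobenius reciprocity. The parabolic induction $\mathrm{Ind} := A_n \otimes_{A_r \boxtimes A_{n-r}} (-)$ is exact because $A_n$ is free as a right $(A_r \boxtimes A_{n-r})$-module on coset representatives of $\mathfrak S_n/(\mathfrak S_r \times \mathfrak S_{n-r})$. Consequently the $(\mathrm{Ind},\mathrm{Res})$ adjunction promotes to a derived version:
\[
\mathrm{ext}^i_{A_n}\bigl(\mathrm{Ind}\, N,\, M\bigr) \;\cong\; \mathrm{ext}^i_{A_r \boxtimes A_{n-r}}\bigl(N,\, \mathrm{Res}\, M\bigr) \qquad (i \geq 0).
\]

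The first key step is a homological criterion for $\Delta$-filteredness: a graded $A_n$-module $M$ is $\Delta$-filtered if and only if $\mathrm{ext}^{\geq 1}_{A_n}(M, K_\mu^*) = 0$ for every $\mu \in \mathsf{Irr}\, \mathfrak S_n$. The forward direction is immediate from Theorem~\ref{forth}(3). For the converse I would proceed by descending induction on the dominance order: pick $\la$ maximal with $\mathrm{hom}(P_\la, M) \neq 0$, use Theorem~\ref{forth}(2) to lift a map from a direct sum of shifts of $\widetilde{K}_\la$ into $M$ whose image $M_\la \subseteq M$ is $\widetilde{K}_\la$-isotypic, and check that $M/M_\la$ still satisfies the ext vanishing; the ext-orthogonality of Theorem~\ref{forth}(3) is exactly what is needed for this last step, and the bounded-below grading ensures that the resulting decreasing filtration is separable.

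With the criterion in hand, the first assertion follows once $\mathrm{Res}\, K_\nu^*$ is known to be filtered by external tensor products $L_\alpha \boxtimes K_\mu^*$: for $M$ a $\Delta$-filtered $A_{n-r}$-module, the Frobenius isomorphism gives
\[
\mathrm{ext}^i_{A_n}\bigl(\mathrm{Ind}(P_\la \boxtimes M),\, K_\nu^*\bigr) = \mathrm{ext}^i_{A_r \boxtimes A_{n-r}}\bigl(P_\la \boxtimes M,\, \mathrm{Res}\, K_\nu^*\bigr),
\]
and by the long exact sequences induced by the filtration of $\mathrm{Res}\, K_\nu^*$, together with K\"unneth and the projectivity of $P_\la$, the right hand side collapses for $i \geq 1$ to a direct sum of terms $\mathrm{ext}^i_{A_{n-r}}(M, K_\mu^*)$, all of which vanish.

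The main obstacle is therefore the second assertion, that $\mathrm{Res}\, K_\nu$ is $\overline{\Delta}$-filtered for every $\nu$; the version needed above for $K_\nu^*$ follows by graded duality, using $L_\alpha^* \cong L_\alpha$ and the fact that $*$ commutes with the forgetful functor $\mathrm{Res}$. For this I plan to work with the Garsia-Procesi presentation \cite{GP92} of $K_\nu^*$ as $\C[X_1,\ldots,X_n]/J_\nu$, where $J_\nu$ is the $\mathfrak S_n$-stable ideal generated by certain partial elementary symmetric functions determined by the shape $\nu$. Restriction to $\mathfrak S_r \times \mathfrak S_{n-r}$ preserves the polynomial-ring structure, and I would construct the filtration by grouping the generators of $J_\nu$ according to how the columns of $\nu$ are distributed between the blocks $(X_1,\ldots,X_r)$ and $(X_{r+1},\ldots,X_n)$, expecting the multiplicities on the associated graded to match the Pieri/Littlewood-Richardson rule for Hall-Littlewood $Q$-functions. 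The delicate step will be upgrading this expected character match to an honest filtration by $L_\alpha \boxtimes K_\mu$-layers; here I would invoke the algebraic recognition results from \cite{Kle14, Kat15} to rigidify the combinatorics at the module level rather than only at the level of graded characters.
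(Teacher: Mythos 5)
Your overall architecture coincides with the paper's: Frobenius reciprocity plus the recognition criterion of Kleshchev \cite[Theorem 7.21, Lemma 7.22]{Kle14} (Theorem \ref{crit-filt} in the paper) reduce both assertions to showing that $\mathrm{res}_{r,n-r}\,K_\nu$ is $\overline{\Delta}$-filtered, and the K\"unneth argument using projectivity of $P_\la$ then handles the induction part. The genuine gap is precisely where you flag the ``main obstacle.'' You propose to build a filtration of $\mathrm{res}_{r,n-r}\,K_\nu$ in one shot by grouping Tanisaki generators of $J_\nu$ according to a column-block split, and to ``rigidify'' the expected character identity afterwards using recognition lemmas. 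As written this is circular: the recognition results you appeal to are $\mathrm{ext}^1$-vanishing criteria, and a graded-character match does not on its own produce or certify $\mathrm{ext}^1$-vanishing --- you would already need a filtration, or some other concrete module-theoretic input, to invoke them.

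The ingredient you are missing is that the required filtration for $r=1$ is already a theorem in \cite[\S 1]{GP92}: the recursion $\mathrm{gr}^F_j R_\nu \cong R_{\nu_{(j+1)}}\left<j\right>$ of Theorem \ref{GP} is a filtration by $\mathfrak S_{n-1}$-stable subspaces, hence an $A_{1,n-1}$-module filtration, and dualizing via Proposition \ref{RK-id} gives an $\overline{\Delta}$-filtration of $\mathrm{res}_{1,n-1}\,K_\nu$. The general $r$ then follows by iterating this $r$ times, peeling off one variable at a time; the $\mathfrak S_r$-module structure on the multiplicity spaces comes for free because $\C\mathfrak S_r$ is semisimple, which is why the criterion need only be verified in the $A_{n-r}$-direction. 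Replacing your from-scratch Littlewood-Richardson-style construction by this citation-and-iteration of Theorem \ref{GP} closes the gap and recovers exactly the paper's proof of Theorem \ref{indres}.
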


Recall that the graded modules
$$\bigoplus_{n \ge 0} K ( A_n \mathchar`-\mathsf{gmod}) \subset \Q (\!(q)\!) \otimes _\Z \bigoplus_{n \ge 0} K ( \mathfrak S_n \mathchar`-\mathsf{mod}),$$
are Hopf algebras by Zelevinsky \cite{Zel81}, that is identified with the ring $\La$ of symmetric functions up to scalar extensions (Theorem \ref{Sind}). In particular, this ring is equipped with four bases $\{s_\la\}_\la, \{Q^{\vee}_\la\}_\la, \{Q_\la\}_\la$, and $\{ S_\la \}_\la$, usually referred to as the Schur functions, the Hall-Littlewood $P$-functions, the Hall-Littlewood $Q$-functions, and the big Schur functions, respectively (\cite{Mac95}). We exhibit a natural character identification (that we call the {\it twisted} Frobenius characteristic)
\begin{equation}
\begin{array}{c|cccc}
\textrm{Modules of }A & P_\la & \widetilde{K}_\la & K_\la & L_\la\\ \hline
\textrm{Basis of }\La & s_\la & Q^{\vee}_\la & Q_\la & S_\la
\end{array}
\end{equation}
that intertwines the products with inductions, and the coproducts with restrictions. (The complete symmetric functions and the elementary symmetric functions are expanded positively by the Schur functions, and hence corresponds to a direct sum of projective modules in this table).

Under this identification, Theorem \ref{ffilt} implies that the multiplication of a Schur function in $\La$
exhibits positivity with respect to the Hall-Littlewood functions (Corollary \ref{HLnum}). In addition, we deduce a homological interpretation of skew Hall-Littlewood functions (Corollary \ref{skewpos}).

In a sense, our exposition here can be seen as a direct approach to an algebraic avatar of the Springer correspondence. We note that interpreting sheaves appearing in the Springer correspondence as constructible functions produces totally different algebraic avatar of the Springer correspondence via Hall algebras (as pursued in Shimoji-Yanagida \cite{SY20}). Although our Hopf algebra structure is closely related to the Heisenberg categorification (cf. \cite{CL12}), the author was not able to find a result of this kind in the literature. Nevertheless, he plans to write a follow-up paper that covers the relation with the Heisenberg categorification in an occasion.

Finally, the author was very grateful to find related \cite{SZ84} during the preparation of this paper.

\section{Preliminaries}\label{sec:prelim}

A vector space is always a $\C$-vector space, and a graded vector space refers to a $\Z$-graded vector space whose graded pieces are finite-dimensional and its grading is bounded from the below. Tensor products are taken over $\C$ unless stated otherwise. We define the graded dimension of a graded vector space as
$$\mathrm{gdim} \, M := \sum_{i\in \Z} q^i \dim _{\C} M_i \in \Q (\!(q)\!).$$
In case $\dim \, M < \infty$, we set $M^* := \bigoplus_{i \in \Z}( M^* )_i$, where $(M^*)_i := ( M_{-i} )^*$ for each $i \in \Z$. We set $[n]_q := \frac{1-q^n}{1-q}$ for each $n\in \Z_{\ge 0}$.

For a $\C$-algebra $A$, let $A \mathchar`-\mathsf{mod}$ denote the category of finitely generated left $A$-modules. If $A$ is a graded algebra in the sense that $A = \bigoplus_{i \in \Z_{\ge 0}} A_i$ and $A_i A_j \subset A_{i+j}$ ($i,j \in \Z_{\ge 0}$), then we denote by $A \mathchar`-\mathsf{gmod}$ the category of finitely generated graded $A$-modules. We also have a full subcategory $A \mathchar`-\mathsf{fmod}$ of $A \mathchar`-\mathsf{gmod}$ consisting of finite-dimensional modules.

For a graded algebra $A$, the category $A \mathchar`-\mathsf{gmod}$ admits an autoequivalence $\left< n \right>$ for each $n \in \Z$ such that $M = \bigoplus_{i \in \Z} M_i$ is sent to $M \left< n \right> := \bigoplus_{i \in \Z} ( M \left< n \right> )_i$, where $( M \left< n \right> )_i = M_{i - n}$. For $M, N \in A \mathchar`-\mathsf{gmod}$, we set
\begin{align*}
\mathrm{hom}_A ( M, N ) & \, := \bigoplus_{j \in \Z} \mathrm{hom}_{A} ( M , N )_j, \hskip 3mm \mathrm{hom}_{A} ( M , N )_j := \mathrm{Hom}_{A \mathchar`-\mathsf{gmod}} ( M\left< j \right>, N  ),\\
\mathrm{ext}^i_A ( M, N ) & \, := \bigoplus_{j \in \Z} \mathrm{ext}^i_{A} ( M, N )_j, \hskip 3mm \mathrm{ext}^i_{A} ( M, N )_j := \mathrm{Ext}^i_{A \mathchar`-\mathsf{gmod}} ( M \left< j \right>, N  ).
\end{align*}
In particular, $\mathrm{hom}_A ( M, N )$ and $\mathrm{ext}^\bullet_A ( M, N )$ are graded vector spaces if $\dim \, A_i < \infty$ for each $i \in \Z_{\ge 0}$. Moreover, $\mathrm{hom}_A ( M, N )_j$ consists of graded $A$-module homomorphisms that raise the degree by $j$.

For $M \in A \mathchar`-\mathsf{gmod}$, the head of $M$ (that we denote by $\mathsf{hd}\, M$) is the maximal semisimple graded quotient of $M$, and the socle of $M$ (that we denote by $\mathsf{soc}\, M$) is the maximal semisimple graded submodule of $M$.

For a decreasing filtration
$$M = F_0 M \supset F_1 M \supset F_2 M\supset \cdots$$
of graded vector spaces, we define its $k$-th associated graded piece as $\mathrm{gr}^F_k M := F_k M / F_{k+1} M$ ($k \ge 0$). We call such a filtration separable if $\bigcap _{k\ge 0} F_k M = \{ 0 \}$.

For an exact category $\mathcal C$, let $[\mathcal C]$ denote its Grothendieck group. For $M \in \mathcal C$, we have its class $[M] \in [\mathcal C]$. In case $\mathcal C$ admits the grading shift functor $\left< n \right>$ ($n\in \Z$), an element $f = \sum_{n}a_n q^n \in \Z[q^{\pm 1}]$ ($a_n \in \Z_{\ge 0}$) defines the direct sum
$$M^{\oplus f} := \bigoplus_{n \in \Z} \left( M \left< n \right> \right)^{\oplus a_n} \hskip 5mm M \in \mathcal C.$$
We may represent a number that is not important by $\star \in \Z [q^{\pm 1}]$.

\subsection{Partitions and the ring of symmetric functions}
We employ \cite{Mac95} as the general reference about partitions and symmetric functions. We briefly recall some key notions there. The set of partitions is denoted by $\cP$, and the set of partitions of $n$ $(\in \Z_{\ge 0})$ is denoted by $\cP_n$. Each of $\cP_n$ is equipped with a partial order $\le$ such that $(n)$ is the largest element. We extend the order $\le$ to the whole $\cP$ by declaring that elements of $\cP_n$ and $\cP_m$ are comparable only if $n = m$. Let $m_i ( \la )$ be the multiplicity of $i$, let $\ell ( \la )$ be the partition length, and let $|\la|$ be the partition size of $\la \in \cP$. The conjugate partition of $\la \in \cP$ is denoted by $\la'$. We set
$$n ( \la ) := \sum_{i \ge 1} (i-1)\la_i = \sum_{i \ge 1} \left( \begin{matrix} \la'_i\\2\end{matrix}\right).$$
For $\la \in \cP_n$ and $1 \le j \le \ell ( \la )+1$, let $\la^{(j)} \in \cP_{n}$ be the partition of $(n+1)$ obtained by rearranging $\{\la_i\}_{i \neq j} \cup \{\la_j + 1\}$, and for $1 \le j \le \ell ( \la )$, we set $\la_{(j)}$ be the partition of $(n-1)$ obtained by rearranging $\{\la_i\}_{i \neq j} \cup \{\la_j - 1\}$. We set
$$b_\la (q) = \prod_{j \ge 1} \left( (1-q) \cdots (1-q^{m_j ( \la )})\right).$$

Let $\La$ be the ring of symmetric functions with their coefficients in $\Z$. Let $\La_{q}$ be its scalar extension to $\Q (\!( q )\!)$. We have direct sum decompositions $\La = \bigoplus_{n \ge 0} \La_n$ and $\La_q = \bigoplus_{n \ge 0} \La_{q,n}$ into the graded components. The ring $\La$ is equipped with four distinguished bases
$$\{ h _\la \}_{\la \in \cP}, \hskip 3mm \{ s_\la \}_{\la \in \cP}, \hskip 3mm \{ e_\la \}_{\la \in \cP}, \hskip 3mm \text{and} \hskip 3mm \{ m_{\la} \}_{\la \in \cP},$$
called (the sets of) complete symmetric functions, Schur functions, elementary symmetric functions, and monomial symmetric functions, respectively. We have equalities
$$h_1 = s_{(1)} = e_1 = m_{(1)}, \hskip 3mm h_n = s_{(n)}, \hskip 3mm \text{and} \hskip 3mm e_n = s_{(1^n)} \hskip 5mm n \in \Z_{>0}.$$

We have a symmetric inner product $(\bullet,\bullet)$ on $\La$ such that
$$(s_\la, s_{\mu}) = ( h_\la, m_\mu ) = \delta_{\la,\mu} \hskip 5mm \la, \mu \in \cP.$$

The ring $\La$ has a structure of a Hopf algebra with the coproduct $\Delta$ satisfying
$$\Delta ( h_n ) = \sum_{i+j=n} h_i \otimes h_j, \hskip 3mm \text{and} \hskip 3mm \Delta ( e_n ) = \sum_{i+j = n} e_i \otimes e_j$$
and the antipode $S$ satisfying
$$S ( h_n ) = (-1)^n e_n, \hskip 3mm \text{and} \hskip 3mm S ( e_n ) = (-1)^n h_n.$$
The antipode $S$ preserves the inner product $(\bullet,\bullet)$.

\subsection{Zelevinsky's picture for symmetric groups}

For a (not necessarily non-increasing) sequence $\la = ( \la_1,\la_2,\ldots) \in \Z_{\ge 0}^{\infty}$ such that $\sum_{j} \la_j = n$, we define the subgroup
$$\mathfrak S_\la := \prod_{j \ge 1} \mathfrak S_{\la_j} \subset \mathfrak S_n.$$
We usually omit $0$ in $\la = ( \la_1,\la_2,\ldots)$. Each $\la \in \cP_n$ defines an irreducible representation of $L_\la$ of $\mathfrak S_n$. We normalize $L_{\la}$ such that
$$L_{(n)} \cong \mathsf{triv}, \hskip 3mm \text{and} \hskip 3mm L_{(1^n)} \cong \mathsf{sgn}.$$
For $0 < r < n$, we have induction/restriction functors
\begin{align*}
\mathrm{Ind}_{r,n-r} : \,  & \C \mathfrak S_{(r,n-r)} \mathchar`-\mathsf{mod} \ni ( M, N ) \mapsto \C \mathfrak S_{n} \otimes_{\C \mathfrak S_{(r,n-r)}} ( M \boxtimes N) \in \mathfrak S_{n} \mathchar`-\mathsf{mod}\\
\mathrm{Res}_{r,n-r} : \, & \C \mathfrak S_{n} \mathchar`-\mathsf{mod} \longrightarrow \C \mathfrak S_{(r,n-r)} \mathchar`-\mathsf{mod},
\end{align*}
where the latter is the natural restriction. They induce corresponding maps between the Grothendieck groups that we denote by the same letter.

\begin{thm}[Zelevinsky \cite{Zel81}]\label{Sind}
We have a $\Z$-module isomorphism
$$\Psi_0 : \bigoplus_{n \ge 0} [\C \mathfrak S_{n} \mathchar`-\mathsf{mod}] \ni [L_\la] \mapsto s_\la \in \La.$$
with the following properties: For $M \in \C \mathfrak S_{r} \mathchar`-\mathsf{mod}$ and $N \in \C \mathfrak S_{n} \mathchar`-\mathsf{mod}$, we have
$$
\Psi_0 ( \mathrm{Ind}_{r,n} \, [ M \boxtimes N ] ) = \Psi_0 ( [M] ) \cdot \Psi_0 ( [N] ), \hskip 5mm \sum_{s=0}^n \Psi_0 ( \mathrm{Res}_{s,n-s} \, [N] ) = \Delta ( [N] ).
$$
In particular, we have
$$
h_r \cdot \Psi_0 ( [N] ) = \Psi_0 ( \mathrm{Ind}_{r,n} \, [ L_{(r)} \boxtimes N ] ), \hskip 5mm  e_r \cdot \Psi_0 ( [N] ) = \Psi_0 ( \mathrm{Ind}_{r,n} \, [ L_{(1^r)} \boxtimes N ] ).
$$
\end{thm}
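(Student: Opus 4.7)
The plan is to identify $\Psi_0$ with the classical Frobenius characteristic map
\[
\ch(M) := \frac{1}{n!}\sum_{w \in \mathfrak S_n}\chi_M(w)\, p_{\rho(w)},\qquad M \in \C\mathfrak S_n\mathchar`-\mathsf{mod},
\]
where $p_\rho \in \La$ is the power-sum symmetric function and $\rho(w) \in \cP_n$ is the cycle type of $w$. The first step is to verify $\ch(L_\la) = s_\la$, a classical identity one can extract by computing the permutation character of $\C\mathfrak S_n \otimes_{\C \mathfrak S_\la}\mathsf{triv}$ (which produces $h_\la$) and then using the unitriangular expansion $h_\mu = \sum_{\la \ge \mu} K_{\la\mu}\, s_\la$ to single out $s_\la$. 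Once this is established, $\Psi_0 = \ch$ is automatically a $\Z$-module isomorphism since $\{[L_\la]\}_{\la \in \cP}$ and $\{s_\la\}_{\la \in \cP}$ are $\Z$-bases of the two sides.

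For the multiplicativity, I would apply the induced-character formula to $\mathrm{Ind}_{r,n-r}(M\boxtimes N)$ at $w \in \mathfrak S_n$ of cycle type $\rho$: the induced character expands as a weighted sum of $\chi_M(\rho')\chi_N(\rho'')$ over pairs $(\rho',\rho'')$ with $\rho' \in \cP_r$, $\rho'' \in \cP_{n-r}$, and $\rho = \rho' \sqcup \rho''$ as multisets. Substituting into the definition of $\ch$ and using the crucial multiplicativity $p_\rho = p_{\rho'}\, p_{\rho''}$ produces $\ch(M)\cdot \ch(N)$, which is the first displayed identity. The Pieri-type identities in the statement then follow immediately from $L_{(r)} \cong \mathsf{triv}$, $L_{(1^r)} \cong \mathsf{sgn}$, together with $\ch(\mathsf{triv}_{\mathfrak S_r}) = h_r$ and $\ch(\mathsf{sgn}_{\mathfrak S_r}) = e_r$.

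For the coproduct compatibility, the cleanest route is by duality. The ring $\La$ carries a canonical Hopf-algebra self-duality making $\{s_\la\}$ self-dual, while the Grothendieck side inherits a self-duality through Frobenius reciprocity
\[
\mathrm{Hom}_{\mathfrak S_n}\!\bigl(\mathrm{Ind}_{s,n-s}(A\boxtimes B),\,C\bigr) \cong \mathrm{Hom}_{\mathfrak S_s\times \mathfrak S_{n-s}}\!\bigl(A\boxtimes B,\,\mathrm{Res}_{s,n-s}C\bigr).
\]
Since $\Psi_0$ is compatible with products and sends one self-dual basis to the other, it automatically intertwines the coproduct on $\La$ with the total sum $\sum_{s=0}^n \mathrm{Res}_{s,n-s}$ on the Grothendieck side.

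The main obstacle is the combinatorial bookkeeping in the induced-character formula: one must carefully unpack the weighted double-coset sum appearing in $\chi_{\mathrm{Ind}_{r,n-r}(M\boxtimes N)}(w)$ on each cycle type and match it with the expansion of the power-sum product $p_{\rho'}p_{\rho''}$. Once that combinatorial matching is in place, the rest is a clean algebraic consequence, and the entire statement becomes the classical fact that $\ch$ provides a graded Hopf-algebra isomorphism from the tower of symmetric-group representations to $\La$.
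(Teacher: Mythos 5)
The paper states this result with a citation to Zelevinsky and does not supply a proof, so there is no internal argument to compare against; your proposal correctly fills in the standard proof via the classical Frobenius characteristic map. The computation $\ch(\mathrm{Ind}_{H}^{G}(M\boxtimes N)) = \ch(M)\ch(N)$ using the induced-character formula plus $p_{\rho'\sqcup\rho''} = p_{\rho'}p_{\rho''}$ is the right mechanism, and your dualization argument for the coproduct is sound: once $\Psi_0$ is established to be an isometry (because it carries the orthonormal basis $\{[L_\la]\}$ to the Hall-orthonormal basis $\{s_\la\}$) and to intertwine induction with multiplication, Frobenius reciprocity on the module side and the self-adjointness of $\La$ force it to intertwine total restriction with $\Delta$. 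One cautionary note: establishing $\ch(L_\la)=s_\la$ via $\ch(\text{permutation module on }\mathfrak S_\mu)=h_\mu$ and the Kostka triangularity requires either first knowing that $\ch$ preserves inner products (which then pins down $s_\la$ uniquely by a Gram--Schmidt argument) or invoking Young's rule independently, so the logical order there should be made explicit. It is also worth mentioning that Zelevinsky's own treatment proceeds through his classification of positive self-adjoint Hopf algebras, a more axiomatic and structural route, whereas your approach is the concrete classical one going back to Frobenius; both are valid, and the concrete route is arguably more transparent for the specific identities $\Psi_0(\mathsf{triv})=h_r$, $\Psi_0(\mathsf{sgn})=e_r$ needed in the last display.
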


\subsection{The algebra $A_n$ and its basic properties}

We follow \cite[\S 2]{Kat15} here. We set
$$A _n := \C \mathfrak S_n \ltimes \C [X_1,\ldots, X_n],$$
where $\mathfrak S_n$ acts on the ring $\C [X_1,\ldots, X_n]$ by
$$(w \otimes 1) ( 1 \otimes X_i) = ( 1 \otimes X_{w ( i )} ) ( w \otimes 1 ) \hskip 5mm w \in \mathfrak S_n, 1 \le i \le n.$$
We usually denote $w$ in place of $w \otimes 1$, and $f \in \C [X_1,\ldots,X_n]$ in place of $1 \otimes f$. The ring $A_n$ acquires the structure of a graded ring by
$$\deg \, w = 0, \hskip 3mm \deg \, X_i = 1 \hskip 5mm w \in \mathfrak S_n, 1 \le i \le n.$$
The grading of the ring $A_n$ is non-negative, and the positive degree part $A_n^+ := \bigoplus_{j > 0} A_n^j$ defines a graded ideal such that $A_n / A_n^+ \cong\C \mathfrak S_n\cong A_n^0$. In particular, each $L_\la$ can be understood to be a graded $A_n$-module concentrated in degree $0$.

The assignments $w \mapsto w^{-1}$ ($w \in W$) and $X_i \mapsto X_i$ ($1 \le i \le n$) define an isomorphism $A_n \cong A_n^{op}$. Therefore, if $M \in A_n \mathchar`-\mathsf{fmod}$, then $M^*$ acquires the structure of a graded $A_n$-module. We have $( L_\la )^* \cong L_\la$ for each $\la \in \cP_n$ as $\mathfrak S_n$ is a real reflection group.

For each $\la \in \cP_n$, we have an idempotent $e_\la \in \C \mathfrak S_n$ such that $L_\la \cong \C \mathfrak S_n e_\la$. We set $P_\la := A_n e_\la$.

\begin{prop}[see \cite{Kat15} \S 2]\label{PLcov}
The modules $\{L_\la \left< j \right>\}_{\la \in \cP_n, j \in \Z}$ is the complete collection of simple objects in $A_n \mathchar`-\mathsf{gmod}$. In addition, $P_\la$ is the projective cover of $L_\la$ in $A_n \mathchar`-\mathsf{gmod}$ for each $\la \in \cP_n$. \hfill $\Box$
\end{prop}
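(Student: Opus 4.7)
The plan is to reduce both assertions to the ordinary representation theory of $\mathfrak{S}_n$, which controls the degree-zero part $A_n / A_n^+ \cong \C \mathfrak{S}_n$, by means of a graded Nakayama argument based on the fact that $A_n^+$ is concentrated in strictly positive degrees while every object of $A_n \mathchar`-\mathsf{gmod}$ is bounded from below in degree.

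For the classification of simples, I would take a simple graded $A_n$-module $S$, let $d$ be its minimal degree, and observe that $(A_n^+ S)_d = 0$ while $S_d \ne 0$, so $A_n^+ S \subsetneq S$. Simplicity then forces $A_n^+ S = 0$, so $S$ descends to a simple graded module over $A_n / A_n^+ \cong \C \mathfrak{S}_n$. Since the latter is concentrated in degree $0$, the module $S$ itself must sit in a single degree $j \in \Z$, and its degree-$j$ piece must be simple as an $\mathfrak{S}_n$-module. The standard parametrization of $\mathsf{Irr} \, \mathfrak{S}_n$ by $\cP_n$ then yields $S \cong L_\la \langle j \rangle$ for a uniquely determined pair $(\la, j)$; distinct pairs give non-isomorphic modules, since they are distinguished either by their concentration degree or by their $\mathfrak{S}_n$-isotype.

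For the projective covers, I would invoke Wedderburn's theorem to write $1 \in \C \mathfrak{S}_n$ as a sum of primitive idempotents whose $\mathfrak{S}_n$-conjugacy classes are in bijection with $\cP_n$ and which can be chosen to include $e_\la$ for each $\la$. Lifting this decomposition to $A_n$ gives $A_n \cong \bigoplus_\mu P_\mu^{\oplus \dim L_\mu}$ as a graded left $A_n$-module, exhibiting each $P_\la$ as a graded direct summand of a free module and hence as graded projective. The canonical quotient $P_\la = A_n e_\la \twoheadrightarrow A_n e_\la / A_n^+ e_\la \cong \C \mathfrak{S}_n e_\la \cong L_\la$ identifies $L_\la$ with $P_\la / A_n^+ P_\la$. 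Because the graded Nakayama argument above also shows that $A_n^+$ lies in the graded Jacobson radical of $A_n$, the quotient $P_\la / A_n^+ P_\la$ is precisely the graded head of $P_\la$; being simple, it forces $P_\la$ to be indecomposable and the map $P_\la \twoheadrightarrow L_\la$ to be the projective cover.

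The only substantive ingredient is the mild graded Nakayama argument, which is essentially immediate from the boundedness-below hypothesis on $A_n \mathchar`-\mathsf{gmod}$, so I do not anticipate any real obstacle beyond routine bookkeeping.
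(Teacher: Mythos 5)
Your argument is correct. The paper itself gives no proof, deferring to \cite{Kat15} \S 2, and the graded Nakayama argument you give (the minimal-degree piece of a simple is killed by $A_n^+$, hence simples descend to $\C\mathfrak S_n$; $A_n^+$ lies in the graded radical, so $P_\la = A_n e_\la$ has simple head $L_\la$ and is the projective cover) is the standard and essentially canonical route, matching the cited reference.
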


We define
$$\widetilde{K}_\la := \frac{P_\la}{\sum_{\mu \not\ge \la, f \in \hom_A ( P_{\mu}, P_\la )} \mathrm{Im} \, f} \hskip 3mm \text{and} \hskip 3mm K_\la := \frac{\widetilde{K}_\la}{\sum_{j > 0, f \in \hom_A ( P_{\la}, \widetilde{K}_\la )_{j}} \mathrm{Im} \, f}.$$

For each $M \in A \mathchar`-\mathsf{gmod}$, we set
$$[M:L_\la]_q := \mathrm{gdim}\, \mathrm{hom}_A ( P_\la, M ) = \sum_{i \in \Z} q^i \dim \, \mathrm{Hom}_{\mathfrak S_n} ( L_\la, M_i ) \in \Z (\!(q)\!).$$
In case the $q = 1$ specialization of $[M:L_{\la}]_q$ makes sense, we denote it by $[M:L_\la]$.

\begin{lem}[see \cite{Kat15} \S 2]\label{Kmult}
For each $\la \in \cP_n$, we have
$$
[K_\la : L_{\mu}]_q = \begin{cases} 0 & \la \not\le \mu\\ 1 & \la = \mu \end{cases}, \hskip 3mm [\widetilde{K}_\la : L_{\mu}]_q \in \begin{cases} 0 & \la \not\le \mu\\ 1 + q \Z [\![q]\!] & \la = \mu \end{cases}.
$$
\end{lem}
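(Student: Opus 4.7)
The plan is to reduce both assertions to the standard identification $\mathrm{hom}_A(P_\nu, M) \cong e_\nu M$ of graded vector spaces coming from $P_\nu = A_n e_\nu$, which gives $[M:L_\nu]_q = \mathrm{gdim}\, e_\nu M$. Under this reduction, both claims become computations of the $e_\nu$-isotypic parts of $\widetilde{K}_\la$ and $K_\la$, and I would argue directly from the defining quotient presentations.

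For the vanishing when $\la \not\le \mu$ (equivalently $\mu \not\ge \la$), I would take any $g \in \mathrm{hom}_A(P_\mu, \widetilde{K}_\la)_j$ and lift it, via the projectivity of $P_\mu$, to some $\widetilde{g} \in \mathrm{hom}_A(P_\mu, P_\la)_j$. By the very definition of $\widetilde{K}_\la$, the image of $\widetilde{g}$ lies in the submodule $N \subset P_\la$ that is being quotiented, so $g = 0$. Hence $\mathrm{hom}_A(P_\mu, \widetilde{K}_\la) = 0$, and the same conclusion transfers to its further quotient $K_\la$ (either by projectivity again, or because $e_\mu K_\la$ is a quotient of $e_\mu \widetilde{K}_\la = 0$).

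For $\mu = \la$, the key step is to verify that the submodule $N$ defining $\widetilde{K}_\la$ has trivial degree-$0$ part. By non-negativity of the grading on $A_n$, only $j \ge 0$ contribute to $\mathrm{hom}_A(P_\nu, P_\la)_j = e_\nu (P_\la)_j$; and for $j = 0$ this equals $\mathrm{Hom}_{\mathfrak S_n}(L_\nu, L_\la)$, which vanishes unless $\nu = \la$. Therefore $N_0 = 0$, so $(\widetilde{K}_\la)_0 = (P_\la)_0 = L_\la$, giving $e_\la(\widetilde K_\la)_0 = \C$ and hence $[\widetilde K_\la : L_\la]_q \in 1 + q\Z[\![q]\!]$. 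For $K_\la$, the quotienting submodule is generated in strictly positive degrees, so $(K_\la)_0 = L_\la$ still holds and the $q^0$-coefficient remains $1$. For $j > 0$, any $x \in e_\la(\widetilde K_\la)_j$ corresponds (via $e_\la \mapsto x$) to an element of $\mathrm{hom}_A(P_\la, \widetilde{K}_\la)_j$, and the cyclic submodule it generates is precisely what is annihilated in passing to $K_\la$; hence $e_\la(K_\la)_j = 0$ for $j > 0$, yielding $[K_\la:L_\la]_q = 1$. The whole argument is essentially formal; the only place that needs care is the systematic use of non-negativity of the grading to rule out maps of negative degree between projectives.
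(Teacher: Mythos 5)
Your proposal is correct, and it follows the same route the paper intends when it says the lemma is "immediate from the definition": identify $[M:L_\mu]_q$ with $\mathrm{gdim}\,e_\mu M$, use projectivity to show any map $P_\mu\to\widetilde K_\la$ with $\mu\not\ge\la$ lifts into the quotiented submodule and hence vanishes, observe that $N_0=0$ by non-negativity of the grading together with $\mathrm{Hom}_{\mathfrak S_n}(L_\nu,L_\la)=\delta_{\nu,\la}\C$, and note that the positive-degree $e_\la$-part of $\widetilde K_\la$ is exactly what gets killed in passing to $K_\la$. Nothing is missing; you have simply made explicit the formal argument the paper leaves to the reader.
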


\begin{proof}
Immediate from the definition.
\end{proof}

For $0 \le r \le n$, we consider the subalgebra
$$A_{r,n-r} := \C \mathfrak S_{(r,n-r)} \ltimes \C [X_1,\ldots, X_n] \cong A_r \boxtimes A_{n-r} \subset A_n.$$

We have induction/restriction functors
\begin{align*}
\mathrm{ind} _{r,n-r} & \, : A_{r,n-r} \mathchar`-\mathsf{gmod} \ni M \mapsto A_n \otimes_{A_{r,n-r}}M \in A_{n} \mathchar`-\mathsf{gmod},\\
\mathrm{res} _{r,n-r} & \, : A_{n} \mathchar`-\mathsf{gmod} \longrightarrow A_{r,n-r} \mathchar`-\mathsf{gmod}.
\end{align*}
Since $A_n$ is free of rank $\frac{n!}{r!(n-r)!}$ over $A_{r,n-r}$, we find that the both functors are exact, and preserves finite-dimensionality of the modules. We sometimes omit the functor $\mathrm{res}_{r,n-r}$ from notation in case there are no possible confusion.

We consider the category $\cA := \bigoplus_{n\ge 0} A_{n} \mathchar`-\mathsf{gmod}$. We define
$$\mathrm{ind} := \bigoplus_{r,s} \mathrm{ind}_{r,s} : \cA \times \cA \rightarrow \cA, \hskip 5mm \mathrm{res} := \bigoplus_{r,s} \mathrm{res}_{r,s} : \cA \rightarrow \cA \boxtimes \cA.$$

\begin{lem}\label{id-indres}
We embed $\mathfrak S_n \mathchar`-\mathsf{mod}$ into $A_{n} \mathchar`-\mathsf{gmod}$ by regarding $M \in \mathfrak S_n \mathchar`-\mathsf{mod}$ as a semisimple graded $A_n$-module concentrated in degree $0$ for each $n \in \Z_{\ge 0}$. Then, we have
$$\mathrm{Ind}_{r,s} = \mathrm{ind}_{r,s} \hskip 5mm \text{and} \hskip 5mm \mathrm{Res}_{r,s} = \mathrm{res}_{r,s} \hskip 5mm r, s \in \Z_{\ge 0}$$
on $\bigoplus_{n \ge 0} \mathfrak S_n \mathchar`-\mathsf{mod}$. In particular, $[\cA]$ can be understood as a $($Hopf$)$ subalgebra of $\C (\!(q)\!) \otimes \La = \La_q$ by extending the scalar in Theorem \ref{Sind}. \hfill $\Box$
\end{lem}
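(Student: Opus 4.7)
The plan is a direct unpacking of definitions exploiting the semidirect product structure $A_n = \C \mathfrak S_n \ltimes \C [X_1, \ldots, X_n]$, followed by an application of Theorem \ref{Sind}.

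First I would verify the identity $\mathrm{ind}_{r,s} = \mathrm{Ind}_{r,s}$ on a module $M \in \mathfrak S_{(r,s)} \mathchar`-\mathsf{mod}$ viewed as a graded $A_{r,s}$-module concentrated in degree $0$. The key observation is that any such $M$ must satisfy $X_i . M = 0$ for all $i$, since the $X_i$ raise the grading by one and there is no room for their images. Combined with the PBW-type decomposition $A_n \cong \C \mathfrak S_n \otimes \C [X_1, \ldots, X_n]$ as a left $\C \mathfrak S_n$-module --- which also exhibits $A_n$ as a free right $A_{r,s}$-module with a basis of minimal-length coset representatives for $\mathfrak S_n / \mathfrak S_{(r,s)}$ --- an arbitrary element $w f \otimes m \in A_n \otimes_{A_{r,s}} M$, with $w \in \mathfrak S_n$ and $f \in \C [X_1, \ldots, X_n]$, rewrites as $w \otimes f . m$. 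Only the constant term of $f$ survives, and the tensor product collapses to $\C \mathfrak S_n \otimes_{\C \mathfrak S_{(r,s)}} M = \mathrm{Ind}_{r,s}(M)$, again concentrated in degree $0$ (the $X_i$-action on the result is again forced to be zero).

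The identity $\mathrm{res}_{r,s} = \mathrm{Res}_{r,s}$ on degree-$0$ modules is then immediate: the functor $\mathrm{res}_{r,s}$ is the forgetful functor along the subalgebra inclusion $A_{r,s} \hookrightarrow A_n$ and in particular preserves the grading, so for a module concentrated in degree $0$ it just restricts the underlying $\mathfrak S_n$-action to $\mathfrak S_{(r,s)}$.

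For the Hopf-subalgebra assertion, I would extend $\Psi_0$ by $\Q (\!( q )\!)$-linearity using the graded multiplicities $[M : L_\la \left< j \right>]$ of Lemma \ref{Kmult} to obtain a map $[\cA] \hookrightarrow \La_q$, which is well-defined because the grading of every object of $\cA$ is bounded below. Exactness of $\mathrm{ind}$ and $\mathrm{res}$ together with their compatibility with the grading shift reduce the compatibility with the multiplication and comultiplication on $\La_q$ to the semisimple degree-$0$ case, which is exactly Theorem \ref{Sind}. No substantive obstacle is anticipated; the only point requiring care is tracking how the $\mathfrak S_n$-action and the $X_i$-action interact across the tensor product defining $\mathrm{ind}_{r,s}$.
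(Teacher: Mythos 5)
Your argument is correct; it spells out the direct computation the paper leaves implicit (the lemma is stated with $\Box$, i.e.\ without proof), namely that $\C[X_1,\ldots,X_n] \subset A_{r,s}$ annihilates any degree-zero module, so $A_n \otimes_{A_{r,s}} M$ collapses to $\C\mathfrak S_n \otimes_{\C\mathfrak S_{(r,s)}} M$ with the $X_i$-action forced to vanish, and that exactness of $\mathrm{ind}$, $\mathrm{res}$ together with compatibility with $\left<\cdot\right>$ reduce the Hopf compatibility of $\Psi$ on $[\cA]$ to the degree-zero statement of Theorem~\ref{Sind}. One small citation slip: the graded multiplicity $[M:L_\la]_q$ is defined in the display immediately preceding Lemma~\ref{Kmult}, not in Lemma~\ref{Kmult} itself (which only records its values for $K_\la$ and $\widetilde K_\la$).
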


The following three theorems are quite well-known to experts.

\begin{thm}[Frobenius-Nakayama reciprocity]\label{fnr}
For $M \in A_{r,n-r} \mathchar`-\mathsf{gmod}$ and $N \in A_n\mathchar`-\mathsf{gmod}$, it holds
$$\mathrm{ext}^k_{A_{n}} ( \mathrm{ind}_{r,n-r}\,M, N )\cong\mathrm{ext}^k_{A_{r,n-r}} ( M, \mathrm{res}_{r,n-r} \, N ) \hskip 5mm k \in \Z.$$
\end{thm}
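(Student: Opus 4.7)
The plan is to derive the Ext-level identity from the familiar Hom-level adjunction between $\mathrm{ind}_{r,n-r}$ and $\mathrm{res}_{r,n-r}$, and then propagate it to all $k$ by resolving the left-hand argument.

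First, I would observe that the tensor-hom adjunction for $A_n$ as an $(A_n, A_{r,n-r})$-bimodule yields the graded isomorphism
\[
\mathrm{hom}_{A_n}\!\bigl(\mathrm{ind}_{r,n-r}\,M,\,N\bigr)\;=\;\mathrm{hom}_{A_n}\!\bigl(A_n\otimes_{A_{r,n-r}}\!M,\,N\bigr)\;\cong\;\mathrm{hom}_{A_{r,n-r}}\!\bigl(M,\,\mathrm{res}_{r,n-r}\,N\bigr),
\]
functorially in $M\in A_{r,n-r}\mathchar`-\mathsf{gmod}$ and $N\in A_n\mathchar`-\mathsf{gmod}$, with the grading compatibility guaranteed because the bimodule structure on $A_n$ is homogeneous. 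This is the $k=0$ case.

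Next, I would record two standard consequences of the fact, already noted before the statement, that $A_n$ is free of rank $\binom{n}{r}$ as a right $A_{r,n-r}$-module (and, by the antiautomorphism $w\mapsto w^{-1}$, as a left module as well). Namely: (i) $\mathrm{ind}_{r,n-r}=A_n\otimes_{A_{r,n-r}}(-)$ is exact, and (ii) $\mathrm{ind}_{r,n-r}$ sends projective graded $A_{r,n-r}$-modules to projective graded $A_n$-modules, either by the usual argument that a left adjoint to an exact functor preserves projectives, or directly because $\mathrm{ind}_{r,n-r}\,A_{r,n-r}\!\left<j\right>\cong A_n\!\left<j\right>$ and hence $\mathrm{ind}_{r,n-r}$ carries free modules to free modules.

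Finally, I would choose a projective resolution $P_\bullet\twoheadrightarrow M$ in $A_{r,n-r}\mathchar`-\mathsf{gmod}$. By (i), $\mathrm{ind}_{r,n-r}\,P_\bullet\twoheadrightarrow\mathrm{ind}_{r,n-r}\,M$ remains exact, and by (ii) each $\mathrm{ind}_{r,n-r}\,P_i$ is projective in $A_n\mathchar`-\mathsf{gmod}$; so it is a projective resolution of $\mathrm{ind}_{r,n-r}\,M$. Applying the Hom-level identity term by term and taking cohomology gives
\[
\mathrm{ext}^k_{A_n}\!\bigl(\mathrm{ind}_{r,n-r}\,M,\,N\bigr)\;\cong\;H^k\!\bigl(\mathrm{hom}_{A_{r,n-r}}(P_\bullet,\,\mathrm{res}_{r,n-r}\,N)\bigr)\;\cong\;\mathrm{ext}^k_{A_{r,n-r}}\!\bigl(M,\,\mathrm{res}_{r,n-r}\,N\bigr),
\]
which is the asserted equality.

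There is really no serious obstacle here; the only point that requires a moment of care is checking that all constructions respect the $\Z$-grading, but this is automatic because the embedding $A_{r,n-r}\hookrightarrow A_n$ and the coset decomposition of $A_n$ over $A_{r,n-r}$ (by any set of minimal-length coset representatives in $\mathfrak S_n/\mathfrak S_{(r,n-r)}$ multiplied by monomials in the $X_i$) are homogeneous with respect to $\deg\,w=0$, $\deg\,X_i=1$.
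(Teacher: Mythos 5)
Your proof is correct and is essentially the same argument the paper gives: the Hom-level Frobenius reciprocity (tensor-hom adjunction) combined with the observation that, since $A_n$ is free over $A_{r,n-r}$, the induction functor carries a projective resolution of $M$ to one of $\mathrm{ind}_{r,n-r}M$, so the isomorphism propagates to all $\mathrm{ext}^k$. You have simply written out the details that the paper's one-sentence proof leaves implicit.
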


\begin{proof}
This follows from the fact that $A_n$ is a free $A_{r,n-r}$-module by the classical Frobenius reciprocity as $\mathrm{ind}_{r,n-r}$ sends a projective resolution of $M$ to a projective resolution of $\mathrm{ind}_{r,n-r}\,M$.
\end{proof}

\begin{thm}\label{ad}
For $M, N \in A_n\mathchar`-\mathsf{fmod}$, it holds
$$\mathrm{ext}^k_{A_{n}} ( M, N )\cong\mathrm{ext}^k_{A_{n}} ( N^*, M^* ) \hskip 5mm k \in \Z.$$
\end{thm}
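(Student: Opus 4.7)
The plan is to reduce the stated Ext duality to a symmetry of Tor induced by the anti-involution $\tau: A_n \to A_n^{op}$ (with $w \mapsto w^{-1}$, $X_i \mapsto X_i$) already used to define $(-)^*$. For a left $A_n$-module $Y$, write $Y^{(\tau)}$ for the right $A_n$-module with the same underlying graded space and action $y \cdot a := \tau(a) y$. Unwinding the definition $(a \cdot f)(y) = f(\tau(a) y)$ of the left action on $Y^*$ produces a natural adjunction
$$\mathrm{hom}_{A_n}(X, Y^*) \cong \mathrm{hom}_{\C}(Y^{(\tau)} \otimes_{A_n} X, \C),$$
and since $\mathrm{hom}_\C(-, \C)$ is exact, deriving in the first variable via a projective resolution of $X$ yields
$$\mathrm{ext}^k_{A_n}(X, Y^*) \cong \mathrm{Tor}_k^{A_n}(Y^{(\tau)}, X)^*$$
for any $X$ and any finite-dimensional $Y$.

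Next I would apply this formula twice. Writing $N = (N^*)^*$ gives $\mathrm{ext}^k_{A_n}(M, N) \cong \mathrm{Tor}_k^{A_n}((N^*)^{(\tau)}, M)^*$, and direct application to the right-hand side yields $\mathrm{ext}^k_{A_n}(N^*, M^*) \cong \mathrm{Tor}_k^{A_n}(M^{(\tau)}, N^*)^*$. To identify the two Tor groups I would establish the canonical swap $W \otimes_{A_n} X \xrightarrow{\sim} X^{(\tau)} \otimes_{A_n} W^{(\tau)}$ (for right $W$ and left $X$) sending $w \otimes x \mapsto x \otimes w$, well-defined precisely because $\tau(ab) = \tau(b) \tau(a)$; this descends to derived functors, giving $\mathrm{Tor}_k^{A_n}(W, X) \cong \mathrm{Tor}_k^{A_n}(X^{(\tau)}, W^{(\tau)})$. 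Applying this with $W = (N^*)^{(\tau)}$ and $X = M$, and using the tautology $((N^*)^{(\tau)})^{(\tau)} = N^*$ as left $A_n$-modules, matches the two Tor groups and finishes the proof.

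I do not expect a genuine obstacle: the only point requiring attention is tracking the various $\tau$-twistings so that the resulting left/right module structures line up on the nose. Since $\tau$ preserves the grading on $A_n$ (the generators $X_i$ are fixed and $\mathfrak S_n$ sits in degree zero), all the constructions are grading-compatible, and no property of $A_n$ beyond those already invoked in the definition of $(-)^*$ is required.
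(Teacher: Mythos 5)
Your argument is correct, and it takes a genuinely different route from the paper's. The paper establishes the degree-zero isomorphism $\mathrm{hom}_{A_n}(M,N)\cong\mathrm{hom}_{A_n}(N^*,M^*)$ and then invokes Grothendieck's theory of universal $\delta$-functors: both sides of the asserted isomorphism are $\delta$-functors in each variable, and approximating $N$ by an injective envelope (hence $N^*$ by a projective cover) shows both are effaceable in the second variable, so they agree by the universality criterion of \cite[2.2.1 Proposition]{Gro57}. By contrast, you bypass injectives and $\delta$-functor machinery altogether by routing through the adjunction $\mathrm{hom}_{A_n}(X,Y^*)\cong\mathrm{hom}_{\C}(Y^{(\tau)}\otimes_{A_n}X,\C)$ and the symmetry of $\mathrm{Tor}$ under the anti-involution $\tau$: since $\mathrm{hom}_{\C}(-,\C)$ is exact on graded vector spaces with finite-dimensional pieces, deriving gives $\mathrm{ext}^k_{A_n}(X,Y^*)\cong\mathrm{Tor}^{A_n}_k(Y^{(\tau)},X)^*$, and the swap $\mathrm{Tor}^{A_n}_k(W,X)\cong\mathrm{Tor}^{A_n}_k(X^{(\tau)},W^{(\tau)})$ (using $\tau^2=\mathrm{id}$ and the balance of $\mathrm{Tor}$) matches the two sides after substituting $N=(N^*)^*$. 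What each buys: your route stays entirely within projective resolutions of finitely generated graded modules and needs no passage to a larger ambient category with enough injectives, a point the paper glosses over with the parenthetical ``(defined in an appropriate ambient categories)''; the paper's route is shorter on the page and, being a purely formal $\delta$-functor argument, transposes verbatim to other situations once the $\mathrm{hom}$-isomorphism in degree zero is in place. Both proofs ultimately use only the anti-involution on $A_n$ and the finite-dimensionality of $M$ and $N$ (to invoke biduality $N\cong(N^*)^*$), so neither is substantially more general than the other.
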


\begin{proof}
We borrow terminology from \cite[\S 2.2]{Gro57}. We have natural isomorphism
$$\mathrm{hom}_{A_{n}} ( M, N )\cong\mathrm{hom}_{A_{n}} ( N^*, M^* ).$$
Since the derived functors of the both sides (defined in an appropriate ambient categories) are $\delta$-functors in each variables, it suffices to see that they are universal $\delta$-functors. By approximating $N$ by its injective envelope (and hence $N^*$ by its projective cover), we find that the both sides are effacable on the second variables. Thus, they must coincide by \cite[2.2.1 Proposition]{Gro57}.
\end{proof}

\begin{thm}\label{gdim}
The global dimension of $A_n$ is finite. In particular, every $M \in A_{n} \mathchar`-\mathsf{gmod}$ admits a graded projective resolution of finite length. 
\end{thm}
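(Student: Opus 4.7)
Set $B := \C [X_1,\ldots,X_n] \subset A_n$. The plan is to transfer Hilbert's syzygy theorem $(\mathrm{gldim}\, B = n)$ to $A_n$, using three structural facts: $A_n$ is a free graded $B$-module of rank $n!$ with basis $\mathfrak S_n$; the augmentation ideals satisfy $A_n^+ = B^+ \cdot A_n$ (because $\mathfrak S_n \subset A_n^0$); and $\C \mathfrak S_n$ is semisimple by Maschke. Noetherianness of $A_n$ (inherited from $B$ via module-finiteness) guarantees that every $M \in A_n \mathchar`-\mathsf{gmod}$ has finitely generated graded syzygies within $A_n \mathchar`-\mathsf{gmod}$.

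Given $M$, I would first construct a minimal graded projective resolution over $A_n$ by iterating the projective cover: set $\Omega^0 M := M$ and inductively $P_k := A_n \otimes_{\C \mathfrak S_n} (\Omega^k M / A_n^+ \Omega^k M)$ with the canonical surjection $P_k \twoheadrightarrow \Omega^k M$, and $\Omega^{k+1} M := \ker (P_k \to \Omega^k M)$. Each $P_k$ is a finite direct sum of shifts of the $P_\la$'s (by Proposition \ref{PLcov}), hence projective in $A_n \mathchar`-\mathsf{gmod}$ and graded free as a $B$-module. Since $A_n^+ \Omega^k M = B^+ \Omega^k M$, this same chain complex, read over $B$, is simultaneously the minimal graded free resolution of $M$ as a $B$-module.

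Hilbert's syzygy theorem for $B$ then forces the $n$-th syzygy $\Omega^n M$ to be graded free over $B$. Since $\Omega^n M$ also carries a compatible graded $\mathfrak S_n$-action (being an $A_n$-submodule of $P_{n-1}$), Maschke's theorem will let me choose a graded $\mathfrak S_n$-equivariant lift of $\Omega^n M \twoheadrightarrow \Omega^n M / B^+ \Omega^n M$; writing $V_n$ for its image, I obtain a $B$-linear, $\mathfrak S_n$-equivariant isomorphism $B \otimes V_n \xrightarrow{\sim} \Omega^n M$, equivalently an $A_n$-linear isomorphism $A_n \otimes_{\C \mathfrak S_n} V_n \xrightarrow{\sim} \Omega^n M$. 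The target is projective in $A_n \mathchar`-\mathsf{gmod}$, so the resolution terminates after at most $n$ steps, giving $\mathrm{gldim}\, A_n \le n$ and the claimed finite graded projective resolutions.

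The one delicate step will be the $\mathfrak S_n$-equivariant splitting of the terminal syzygy — that is, showing that a $\mathfrak S_n$-equivariant graded free $B$-module is induced from some graded $\C \mathfrak S_n$-module. This is precisely where the semisimplicity of $\C \mathfrak S_n$ (hence the characteristic-zero hypothesis) is essential; everything else is formal bookkeeping with syzygies and with the freeness of $A_n$ over $B$.
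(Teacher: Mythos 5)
Your argument is correct and self-contained, and it differs from the paper's route, which simply cites the general bound for skew group rings from McConnell--Robson--Small \cite[7.5.6]{MR01} (finite group of invertible order acting on a ring of finite global dimension). Your proof instead builds the minimal graded projective resolution over $A_n$ directly and observes that, because $\mathfrak S_n$ sits in degree zero so that $A_n^+ M = B^+ M$ for $B = \C[X_1,\ldots,X_n]$, the same complex is simultaneously the minimal graded free resolution over $B$; Hilbert's syzygy theorem then forces the $n$-th syzygy to be $B$-free, and degree-wise Maschke supplies a graded $\mathfrak S_n$-equivariant section of $\Omega^n M \to \Omega^n M / B^+ \Omega^n M$, yielding an $A_n$-isomorphism $\Omega^n M \cong A_n \otimes_{\C\mathfrak S_n} V_n$, which is projective by Proposition \ref{PLcov}. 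The two approaches trade off as follows: the citation is quicker and covers ungraded global dimension in one stroke, while your argument is elementary, stays entirely inside $A_n\mathchar`-\mathsf{gmod}$ (in the spirit of the paper's stated goal), and produces the explicit bound $\mathrm{pd}_{A_n} M \le n$ with the explicit form of the terminal syzygy. One small point worth making explicit when you write this up: the equivariant section exists degree-by-degree because each $N_d$ is a finite-dimensional $\mathfrak S_n$-module, and the resulting $B$-linear map $B \otimes V_n \to \Omega^n M$ is an isomorphism by graded Nakayama plus a rank count, since $\Omega^n M$ is $B$-free of finite rank.
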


\begin{proof}
See McConnell-Robson-Small \cite[7.5.6]{MR01}.
\end{proof}

We have a $\Z [ q^{\pm 1} ]$-bilinear inner product $\left< \bullet, \bullet \right>_{EP}$ on $[\cA]$ prolonging
$$A_{n} \mathchar`-\mathsf{gmod} \times A_{n} \mathchar`-\mathsf{fmod} \ni (M,N) \mapsto \sum_{i \ge 0} (-1)^i \mathrm{gdim} \, \mathrm{ext}^i_{A_n} ( M, N^* )^* \in \Q (\!( q )\!).$$

\begin{lem}
The pairing $\left< \bullet, \bullet \right>_{EP}$ is a well-defined symmetric form on $[\mathcal A]$.
\end{lem}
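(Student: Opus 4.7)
The plan is to verify three properties in sequence: finiteness of each $\mathrm{ext}$ term, additivity over short exact sequences, and symmetry via duality.

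\emph{Finiteness.} Fix $M \in A_n \mathchar`-\mathsf{gmod}$ and $N \in A_n \mathchar`-\mathsf{fmod}$. Since $A_n$ is graded Noetherian, $M$ admits a graded projective resolution $P_\bullet \to M$ in which each $P_k$ is a finite direct sum of shifted copies of the $P_\la$ (Proposition \ref{PLcov}), and by Theorem \ref{gdim} we may take $P_\bullet$ of finite length. Because $\mathrm{hom}_{A_n}(P_\la \langle j \rangle, N^*) \cong e_\la N^* \langle -j \rangle$ is finite-dimensional whenever $N$ is, each $\mathrm{ext}^i_{A_n}(M, N^*)$ is finite-dimensional and only finitely many are nonzero. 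Hence the alternating sum is a well-defined element of $\Z[q^{\pm 1}]$, and its graded dual contributes to a value in $\Z[q^{\pm 1}] \subset \Q(\!(q)\!)$.

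\emph{Descent to the Grothendieck group.} For a short exact sequence $0 \to M_1 \to M_2 \to M_3 \to 0$ in $A_n \mathchar`-\mathsf{gmod}$, the long exact sequence of $\mathrm{ext}^\bullet_{A_n}(-, N^*)$ combined with finite global dimension yields the additivity identity $\langle [M_2], [N] \rangle_{EP} = \langle [M_1], [N] \rangle_{EP} + \langle [M_3], [N] \rangle_{EP}$. An analogous argument in the second variable, using the exactness of $(-)^*$ on $A_n \mathchar`-\mathsf{fmod}$, produces additivity on the right. One then extends $\Z[q^{\pm 1}]$-bilinearly to $[\cA] \times [\cA]$: an arbitrary f.g. class can be written (formally, graded degree by graded degree) as a $\Z(\!(q)\!)$-combination of the finite-dimensional simples $[L_\la]$ via the graded Jordan--H\"older filtration, and the formula respects this expansion.

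\emph{Symmetry.} For $M, N \in A_n \mathchar`-\mathsf{fmod}$, Theorem \ref{ad} applied to the pair $(N, M^*)$ gives
$$\mathrm{ext}^i_{A_n}(N, M^*) \cong \mathrm{ext}^i_{A_n}\bigl((M^*)^*, N^*\bigr) = \mathrm{ext}^i_{A_n}(M, N^*).$$
Taking graded dimensions, applying $(-)^*$ (which sends $q \mapsto q^{-1}$ on graded dimensions), and summing with signs yields $\langle [M], [N] \rangle_{EP} = \langle [N], [M] \rangle_{EP}$. This symmetry propagates to the full $\Z[q^{\pm 1}]$-bilinear extension to $[\cA]$.

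The main delicate point is the extension step in the second paragraph: the defining formula requires one argument to be finite-dimensional in order for $(-)^*$ to be meaningful, so one must check that the two natural partial evaluations (treating $M$ f.d.\ or treating $N$ f.d.) agree on their common overlap before bilinearly extending. This compatibility is precisely what Theorem \ref{ad} guarantees at the level of each pair $(L_\la, L_\mu)$ of simple modules, which generate $[A_n \mathchar`-\mathsf{fmod}]$ as a $\Z[q^{\pm 1}]$-module, so the extension is unambiguous.
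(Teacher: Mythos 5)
Your argument is correct and follows essentially the same route as the paper: additivity from long exact sequences, $\Z[q^{\pm 1}]$-bilinear (and then $\Q(\!(q)\!)$-bilinear) extension using the projective/simple presentations of $[A_n\mathchar`-\mathsf{gmod}]$, and symmetry by Theorem~\ref{ad}. The only difference is emphasis: you devote a paragraph to finiteness and to checking that the extension is unambiguous, whereas the paper compresses these into a few lines (and makes explicit the one small point you leave implicit, namely that both $M \mapsto M\langle n\rangle$ and $N \mapsto N\langle n\rangle$ scale the pairing by $q^n$ because of how the dual is placed in the definition --- this is what makes the $\Z[q^{\pm 1}]$-bilinearity claim literal). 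Your worry about the ``two partial evaluations'' is a reasonable check but, as you note, it is exactly settled by Theorem~\ref{ad}, and since each $\cP_n$ is finite the $\Q(\!(q)\!)$-bilinear extension involves only finite sums and raises no convergence issues.
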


\begin{proof}
Since the Euler-Poincar\'e form respects the short exact sequences, the form $\left< \bullet, \bullet \right>_{EP}$ must be additive with respect to the both variables.

By the arrangement of duals in the definition of $\left< \bullet, \bullet \right>_{EP}$, we find that replacing $M$ with $M\left< n \right>$ and replacing $N$ with $N \left< n \right>$ both result in multiplying $q^n$ ($n \in \Z$). As the category $\cA$ has finite direct sums, we conclude that $\left< \bullet, \bullet \right>_{EP}$ must be $\Z [q^{\pm 1}]$-bilinear.

We have
$$[A_{n} \mathchar`-\mathsf{gmod}] = \bigoplus_{\la \in \cP_n} \Z[q^{\pm 1}] [P_\la] \subset \bigoplus_{\la \in \cP_n} \Q(\!(q)\!) [L_\la]$$
  by Proposition \ref{PLcov}. In particular, $\left< L_\la,  L_{\mu}\right>_{EP} \in \Q(\!(q)\!)$ ($\la, \mu \in \cP_n$) uniquely determines a well-defined $\Q (\!(q)\!)$-bilinear form $\left< \bullet, \bullet \right>_{EP}$ that restricts to $[\cA]$. It is symmetric by Theorem \ref{ad}.
\end{proof}

\section{Main results}

Keep the setting of the previous section.

\begin{defn}
Fix $0 \le r \le n$. A $\Delta$-filtration (resp. $\overline{\Delta}$-filtration) of $M \in A_{n} \mathchar`-\mathsf{gmod}$ is a decreasing separable filtration
$$M = F_0 M \supset F_1 M \supset F_2 M \supset \cdots$$
of graded $A_n$-modules (resp. graded $A_{r,n-r}$-modules) such that
$$\mathrm{gr}^F_k M \in \{\widetilde{K}_\la \left< m \right> \}_{\la \in \cP_n, m \in \Z} \hskip 4mm \text{(resp. } \mathrm{gr}^F_k M \in \{ L_{\mu} \boxtimes K_\nu \left< m \right> \}_{\mu \in \cP_r, \nu \in \cP_{n-r}, m \in \Z}\text{)}$$
for each $k \ge 0$. In case $M$ admits a $\Delta$-filtration, then we set
$$( M: \widetilde{K}_{\la})_q := \sum_{k = 0}^{\infty} \sum_{m \in \Z} q^m \chi ( \mathrm{gr}^F_k M \cong \widetilde{K}_\la \left< m \right> ),$$
where $\chi ( \mathfrak X )$ takes value $1$ if the proposition $\mathfrak X$ is true, and $0$ otherwise.
\end{defn}

\begin{lem}[\cite{Kat15} \S 2 or \cite{Kle14}]
The multiplicity $( M: \widetilde{K}_{\la} )_q$ does not depend on the choice of $\Delta$-filtration. \hfill $\Box$
\end{lem}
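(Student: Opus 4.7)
My plan is to express the multiplicities $(M:\widetilde{K}_\lambda)_q$ in terms of data intrinsic to $M$, namely the graded Jordan--H\"older multiplicities $[M:L_\mu]_q$. The bridge between the two will be the change-of-basis matrix $\bigl([\widetilde{K}_\lambda : L_\mu]_q\bigr)_{\lambda,\mu \in \cP_n}$, which is controlled by Lemma \ref{Kmult}.

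The first step I would carry out is to verify that $[{-}:L_\mu]_q$ is additive over a $\Delta$-filtration $F_\bullet M$. This is the only step requiring care, because the filtration is a priori infinite. Fix $i \in \Z$. Since each $\widetilde{K}_\lambda$ is bounded below in degree and $F_\bullet$ is separable, for this fixed $i$ only finitely many of the graded pieces $(\mathrm{gr}^F_k M)_i$ are nonzero, and $M_i \cong \bigoplus_k (\mathrm{gr}^F_k M)_i$ as $\C\mathfrak S_n$-modules. Reading off $L_\mu$-isotypic components in each degree, and then grouping the filtration quotients by isomorphism type via $[\widetilde{K}_\lambda\langle m\rangle : L_\mu]_q = q^m[\widetilde{K}_\lambda : L_\mu]_q$, yields
\[
[M:L_\mu]_q \;=\; \sum_{\lambda \in \cP_n} (M:\widetilde{K}_\lambda)_q \cdot [\widetilde{K}_\lambda : L_\mu]_q \qquad (\mu \in \cP_n).
\]

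In the second step, I would invert this linear system. By Lemma \ref{Kmult}, the square matrix $T := \bigl([\widetilde{K}_\lambda : L_\mu]_q\bigr)_{\lambda,\mu}$ is upper triangular with respect to the dominance order on $\cP_n$ (its $(\lambda,\mu)$-entry vanishes unless $\lambda \le \mu$), and the diagonal entries lie in $1 + q\Z[\![q]\!]$, which are units of $\Z(\!(q)\!)$. Hence $T$ is invertible, and the displayed system determines $\bigl((M:\widetilde{K}_\lambda)_q\bigr)_\lambda$ uniquely from the intrinsic data $\bigl([M:L_\mu]_q\bigr)_\mu$. The main obstacle is the additivity established in the first step, and it dissolves entirely by the degree-by-degree reduction to a finite filtration; once additivity is secured, the inversion afforded by Lemma \ref{Kmult} is purely formal.
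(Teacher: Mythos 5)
Your proof is correct, and it is the standard argument for well-definedness of $\Delta$-filtration multiplicities: reduce to graded Jordan--H\"older multiplicities and invert the unitriangular decomposition matrix. The paper itself offers no proof (it cites \cite{Kat15} and \cite{Kle14}), so there is nothing in the text to compare against, but this is exactly the argument one expects at this point, since it uses only Lemma \ref{Kmult} and none of the $\mathrm{ext}$-orthogonality that the paper has yet to establish.

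One small point of precision in your first step. The reason that, for fixed $i$, only finitely many $(\mathrm{gr}^F_k M)_i$ are nonzero is primarily that $M_i$ is finite-dimensional (this is built into the paper's convention for graded vector spaces) together with separability: the nested subspaces $(F_k M)_i \subset M_i$ decrease, have intersection $\{0\}$, and live in a finite-dimensional space, so $(F_k M)_i = 0$ for $k \gg 0$. The boundedness below of each $\widetilde{K}_\lambda$ (equivalently, of $M$) is what you need separately to guarantee that the shifts $m_k$ are bounded below, so that $(M:\widetilde{K}_\lambda)_q$ actually lands in $\Z(\!(q)\!)$ rather than in an uncontrolled formal expression. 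As written you attribute the finiteness to boundedness-below plus separability; it works, but it is cleaner to cite finite-dimensionality of the graded pieces. With that clarification the argument is complete and self-contained.
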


The following theorem is not new (see Remark \ref{hist}). Nevertheless, the author feels it might worth to report a yet another proof based on Garsia-Procesi \cite{GP92}, that differs significantly from other proofs and is carried out within the category of $A_n$-modules:

\begin{thm}\label{main}
Let $\la,\mu \in \cP_n$. We have the following:
\begin{enumerate}
    \item For each $\la \in \cP_n$, the graded ring $\mathrm{end}_{A_n} ( \widetilde{K}_\la )$ is a polynomial ring generated by homogeneous polynomials of positive degrees;
\item The module $\widetilde{K}_\la$ is free over $\mathrm{end}_{A_n} ( \widetilde{K}_\la )$, and we have
$$\C_0 \otimes _{\mathrm{end}_{A_n} ( \widetilde{K}_\la )} \widetilde{K}_\la \cong K_\la,$$
where $\C_0$ is the unique graded one-dimensional quotient of $\mathrm{end}_{A_n} ( \widetilde{K}_\la )$; 
\item We have the $\mathrm{Ext}$-orthogonality:
$$\mathrm{ext}^i_{A_n} ( \widetilde{K}_\la, K_\mu^* ) \cong \C^{\oplus \delta_{\la,\mu} \delta_{i,0}};$$
\item Each $P_\la$ admits a $\Delta$-filtration, and we have $(P_\la:\widetilde{K}_\mu) _q = [K_\mu:L_\la]_q$.
\end{enumerate}
\end{thm}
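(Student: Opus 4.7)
The plan is to exploit Garsia-Procesi's explicit graded realization of $K_\mu^*$ as a Tanisaki-type cyclic quotient of $\C[X_1,\ldots,X_n]$, together with their branching recursion: $\mathrm{res}_{1,n-1}\, K_\mu^*$ admits a filtration whose associated graded is a direct sum $\bigoplus_j K_{\mu_{(j)}}^* \left< \star_j \right>$ with explicit shifts. Combined with Frobenius-Nakayama reciprocity (Theorem~\ref{fnr}), the duality of Theorem~\ref{ad}, and Zelevinsky's identification (Theorem~\ref{Sind} and Lemma~\ref{id-indres}), this recursion inductively pins down the graded multiplicities $[K_\mu:L_\la]_q$ as the Kostka-Foulkes polynomials; the upper-triangularity from Lemma~\ref{Kmult} ($[K_\mu:L_\la]_q = 0$ unless $\la \ge \mu$) then becomes the qualitative fuel for all subsequent inductions.

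I would first establish (4) by descending induction on $\la \in \cP_n$ in the dominance order. The base case $\la = (n)$ is trivial: $P_{(n)} = \widetilde{K}_{(n)}$ is the trivial-isotypic component of $A_n$, already a $\Delta$-summand. For the induction step, the surjection $P_\la \twoheadrightarrow \widetilde{K}_\la$ has kernel $N_\la$ equal to the image of $\bigoplus_{\mu \not\ge \la} P_\mu \to P_\la$; by the definition of $\widetilde{K}_\la$ and Lemma~\ref{Kmult}, every simple constituent of $N_\la$ is labelled by some $\nu > \la$. Using the inductively available $\Delta$-filtrations of the contributing $P_\mu$'s together with Kleshchev-type patching arguments \cite{Kle14}, $N_\la$ acquires a $\Delta$-filtration, producing the desired $\Delta$-filtration of $P_\la$. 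The multiplicity identity $(P_\la:\widetilde{K}_\mu)_q = [K_\mu:L_\la]_q$ is enforced by matching the resulting graded character of $P_\la$ against the Garsia-Procesi output on the $K_\mu$-side.

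Parts (3), (1), (2) then follow by homological bookkeeping. For (3): the equality $\mathrm{hom}_{A_n}(\widetilde{K}_\la, K_\mu^*) \cong \C^{\oplus \delta_{\la,\mu}}$ comes from head/socle analysis (Lemma~\ref{Kmult}), and higher $\mathrm{ext}^i$-vanishing is extracted from the short exact sequence $0 \to N_\la \to P_\la \to \widetilde{K}_\la \to 0$ of the previous step: the projectivity of $P_\la$ and dévissage through the $\Delta$-subquotients $\widetilde{K}_\nu$ of $N_\la$ (with $\nu > \la$, so the required Ext-vanishing is available by downward induction) closes the argument, with Theorem~\ref{gdim} ensuring termination. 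For (1) and (2): the graded character identity
\[
\mathrm{gdim}\, \mathrm{end}_{A_n}(\widetilde{K}_\la) \cdot \mathrm{gdim}\, K_\la = \mathrm{gdim}\, \widetilde{K}_\la
\]
forces $\mathrm{gdim}\, \mathrm{end}_{A_n}(\widetilde{K}_\la)$ to be the Poincaré polynomial of a polynomial ring whose generator degrees are read off from the shape of $\la$; concrete polynomial generators are then extracted from the symmetric-function action of $\C[X_1,\ldots,X_n]$ on $\widetilde{K}_\la$, and the freeness assertion together with $K_\la \cong \C_0 \otimes_{\mathrm{end}_{A_n}(\widetilde{K}_\la)} \widetilde{K}_\la$ follows by a graded Nakayama argument applied to these generators.

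The principal obstacle, in my view, is not the combinatorial backbone --- Garsia-Procesi's recursion delivers the graded multiplicities cleanly --- but the final step of upgrading the resulting character identity to an explicit polynomial ring structure on $\mathrm{end}_{A_n}(\widetilde{K}_\la)$, with algebraically independent homogeneous generators realized inside $A_n$, purely within $A_n\mathchar`-\mathsf{gmod}$ (i.e.\ without recourse to the geometric interpretation of \cite{Kat15,Kat17}). Verifying the commutation, algebraic independence, and compatibility of these generators with the $\Delta$-filtration produced in (4) is where the bulk of the technical labor will sit.
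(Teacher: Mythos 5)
Your proposal contains several genuine gaps, and the overall organization is circular in ways that cannot be patched without the paper's central construction.

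First, the base case of your induction for part (4) is false. You assert that $P_{(n)} = \widetilde{K}_{(n)}$, but $P_{(n)} \cong \C[X_1,\ldots,X_n]$ whereas $\widetilde{K}_{(n)}$ is (Lemma~\ref{Ktriv}) only $L_{(n)} \otimes \C[X_1+\cdots+X_n]$, i.e.\ a polynomial ring in one variable: the degree-one piece of $\C[X_1,\ldots,X_n]$ contributes an $L_{(n-1,1)}$, whose image must be killed in $\widetilde{K}_{(n)}$. Relatedly, your assertion that every simple constituent of $N_\la = \ker(P_\la \twoheadrightarrow \widetilde{K}_\la)$ is labelled by some $\nu > \la$ is not correct on the face of it; Lemma~\ref{Kmult} only controls $[\widetilde{K}_\la : L_\mu]_q$, and $N_\la$ can (and typically does) contain $L_\mu$ with $\mu \not\geq \la$ as well as $\mu \geq \la$. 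The clean statement ``$N_\la$ has a $\Delta$-filtration by $\widetilde{K}_\nu$ with $\nu > \la$'' is a consequence of the quasi-hereditary structure, not something available at the outset.

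Second, the ordering (4) $\Rightarrow$ (3) $\Rightarrow$ (1), (2) is circular. The ``Kleshchev-type patching'' you invoke for (4) is precisely Theorem~\ref{crit-filt} ($M$ has a $\Delta$-filtration iff $\mathrm{ext}^1_{A_n}(M, K_\la^*) = 0$), and that criterion presupposes that $A_n$ is an affine quasi-hereditary algebra with standard objects $\widetilde{K}_\la$ and costandard objects $K_\la^*$ --- that is, it presupposes parts (1)--(3). In the paper (see the beginning of \S\ref{indproof}), the quasi-heredity of $A_n$, and with it the entirety of (4), is extracted from (1)--(3), not the other way round. Similarly, the graded character identity you write for (1)/(2),
$\mathrm{gdim}\,\mathrm{end}_{A_n}(\widetilde{K}_\la)\cdot\mathrm{gdim}\,K_\la=\mathrm{gdim}\,\widetilde{K}_\la$,
is not a priori true: it is exactly the content of the freeness claim in part (2). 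You are assuming the conclusion.

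Third, and most substantively, you are missing the paper's central device: the modules $\widetilde{K}^+_\la = A_n \otimes_{A_\la}(\widetilde{K}_{(\la_1)}\boxtimes\cdots\boxtimes\widetilde{K}_{(\la_\ell)})$ and $\widetilde{K}'_\la := \mathrm{Im}(\widetilde{K}_\la \to \widetilde{K}^+_\la)$. These give the \emph{only} concrete handle on $\mathrm{end}_{A_n}(\widetilde{K}_\la)$: Proposition~\ref{ext-est} shows $\mathrm{end}_{A_n}(\widetilde{K}'_\la) = B(\la)^{\mathfrak S(\la)}$, an explicitly realized polynomial ring, and the induction on $n$ (not on dominance within $\cP_n$!) proves as a byproduct that $\widetilde{K}_\la = \widetilde{K}'_\la$. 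This transport from $n-1$ to $n$ is exactly the direction in which Garsia-Procesi's recursion (Theorem~\ref{GP}) moves; a downward induction inside a fixed $\cP_n$ has no such fuel, since $R_{\la_{(j)}}$ lives over $\mathfrak S_{n-1}$ rather than $\mathfrak S_n$. The ``bulk of the technical labor'' you correctly identify at the end is in fact the entire content of \S2.2--2.3 (the Euler--Poincar\'e/Hall form identification, Propositions~\ref{ext-est}, \ref{selfex}, \ref{support}, Corollary~\ref{surjmult}), and none of it can be replaced by the character-level bookkeeping you propose.
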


\begin{proof}
Postponed to \S \ref{mainproof}.
\end{proof}

\begin{rem}\label{hist}
Theorem \ref{main} is originally proved in \cite{Kat15, Kat17} essentially in this form by using the geometry of Springer correspondence (that works for arbitrary Weyl groups with arbitrary cuspidal data). Theorem \ref{main} also follows from results of Haiman \cite{Hai01,Hai03} that employ the geometry of Hilbert schemes of points on $\C^2$. We also have two algebraic proofs of Theorem \ref{main}, one is to use a detailed study of two-sided cells of affine Hecke algebras by Xi \cite{Xi02} together with K\"onig-Xi \cite{KX12} and Kleshchev \cite{Kle14}, and another is an analogous result for affine Lie algebras (Chari-Ion \cite{CI15}) together with Feigin-Khoroshkin-Makedonskyi \cite{FKM19}.
\end{rem}

We exhibit applications of Theorem \ref{main} in \S \ref{indproof}.

\subsection{Garsia-Procesi's theorem}

For each $\tI \subset [1,n]$ and $|\tI| \ge r \ge 1$, let $e_r ( \tI )$ be the $r$-th elementary symmetric function with respect to the variables $\{X_i\}_{i \in \tI}$. For $\la \in \cP_n$, we set
$$d_r ( \la ) := \la'_1 + \cdots + \la'_r \hskip 5mm (1 \le r \le n).$$
We set
$$\mathcal C_\la := \{ e_t ( \tI ) \mid r \ge t \ge r - d_r ( \la ), |\tI| = r, \tI \subset [1,n] \}.$$
Let $I _\la \subset \C [X_1,\ldots,X_n]$ be the ideal generated by $\mathcal C_\la$ (originally introduced in \cite{Tan82}).

\begin{defn}
We set $R_\la := \C [X_1,\ldots,X_n] / I_\la$, and call it the Garsia-Procesi module.
\end{defn}

\begin{lem}[\cite{GP92} \S 3]
The algebra $R_\la$ admits a structure of graded $A_n$-module generated by $L_{(n)}$. In addition, $[R_\la:L_{(n)}] _q = 1$.
\end{lem}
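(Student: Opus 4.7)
The plan is to verify three claims: (a) the ideal $I_\la$ is $\mathfrak S_n$-stable, so that $R_\la$ inherits a graded $A_n$-module structure; (b) $R_\la$ is cyclically generated by a copy of $L_{(n)}$ in degree $0$; (c) the graded multiplicity $[R_\la : L_{(n)}]_q$ equals $1$.

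For (a) I would note that $w \cdot e_t(\tI) = e_t(w(\tI))$ for any $w \in \mathfrak S_n$; since $\mathcal C_\la$ contains $e_t(\tI)$ for \emph{all} $\tI \subset [1,n]$ of a given size $r$ while the constraint $r \ge t \ge r - d_r(\la)$ depends only on $r = |\tI|$, the set $\mathcal C_\la$ is $\mathfrak S_n$-stable and hence so is the ideal it generates. For (b) the image $\bar 1$ of $1 \in \C[X_1,\ldots,X_n]$ in $R_\la$ is nonzero because every generator of $I_\la$ has strictly positive degree, is $\mathfrak S_n$-invariant, and is homogeneous of degree $0$; it generates $R_\la$ already as a $\C[X_1,\ldots,X_n]$-module, hence a fortiori as an $A_n$-module, so $\C \cdot \bar 1 \cong L_{(n)}$ is the desired generating submodule.

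The substantive step is (c), which I would treat by matching upper and lower bounds. The bound $[R_\la : L_{(n)}]_q \ge 1$ is supplied by the nonzero invariant $\bar 1$. For the upper bound I would verify that the ordinary full elementary symmetric polynomials $e_t(X_1,\ldots,X_n)$ for $1 \le t \le n$ all lie in $\mathcal C_\la$: taking $\tI = [1,n]$ gives $r = n$ and $d_n(\la) = |\la| = n$, so the condition $n \ge t \ge n - d_n(\la) = 0$ is satisfied by every $t \in \{1,\ldots,n\}$. Consequently $R_\la$ is a graded $A_n$-quotient of the coinvariant algebra $\C[X_1,\ldots,X_n]/(e_1,\ldots,e_n)$, which by the Chevalley--Shephard--Todd theorem realizes the regular representation of $\mathfrak S_n$ and in particular contains the trivial representation $L_{(n)}$ exactly once, concentrated in degree $0$. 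This gives $[R_\la : L_{(n)}]_q \le 1$ and closes the argument.

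The only real obstacle is the numerological check $d_n(\la) = n$ that produces the inclusion $(e_1,\ldots,e_n) \subseteq I_\la$; once that is in place, the proof is a clean application of the Chevalley--Shephard--Todd theorem combined with the manifest $\mathfrak S_n$-equivariance of the Tanisaki presentation.
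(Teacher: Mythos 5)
Your proposal is correct and follows essentially the same route as the paper: both hinge on the two observations that $\mathcal C_\la$ is manifestly $\mathfrak S_n$-stable and that the full elementary symmetric polynomials $e_1,\ldots,e_n$ all lie in $\mathcal C_\la$ (your $d_n(\la)=n$ check), so that $R_\la$ is a quotient of the coinvariant ring and hence carries $L_{(n)}$ only in degree zero. The only cosmetic difference is that you spell out the Chevalley--Shephard--Todd step explicitly, whereas the paper leaves it implicit in the remark that $I_\la$ contains the augmentation ideal of $\C[X_1,\ldots,X_n]^{\mathfrak S_n}$.
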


\begin{proof}
Since $R_\la$ is the quotient of $P_{(n)}$, it suffices to see that the ideal $I_\la$ is graded and $\mathfrak S_n$-stable. Since $\mathcal C_\la$ consists of homogeneous polynomials and it is stable under the $\mathfrak S_n$-action, we conclude the first assertion. For the second assertion, it suffices to notice that $\mathcal C_\la$ contains all the elementary symmetric polynomials in $\C [X_1,\ldots,X_n]$, and hence $I_\la$ contains all the positive degree part of $\C [X_1,\ldots,X_n]^{\mathfrak S_n}$.
\end{proof}

\begin{thm}[Garsia-Procesi \cite{GP92} \S 1]\label{GP}
Let $\la \in \cP_n$. The $\C [X_1,\ldots,X_n]$-module $R_\la$ admits a decreasing filtration
\begin{equation}
R_\la = F_0 R_\la \supset F_1 R_\la \supset \cdots \supset F_{\ell ( \la )} R_\la = \{ 0 \}\label{GPfilt}
\end{equation}
such that $\mathrm{gr}^F_j R_\la \cong R_{\la_{(j+1)}} \left< j \right>$ for $0 \le j < \ell ( \la )$. In addition, this filtration respects the $\mathfrak S_{n-1}$-action, and hence can be regarded as an $A_{1,n-1}$-module filtration. \hfill $\Box$
\end{thm}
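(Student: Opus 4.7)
The plan is to realize the filtration as the $X_1$-adic filtration on $R_\la$ and identify each graded piece with a smaller Garsia-Procesi module via a direct comparison of the defining Tanisaki ideals.

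For the construction, set
$$F_j R_\la := X_1^j \cdot R_\la \subset R_\la,$$
the image in $R_\la$ of the principal ideal $(X_1^j) \subset \C[X_1,\ldots,X_n]$. This is a decreasing chain of $\C[X_1,\ldots,X_n]$-submodules, and since $X_1$ is fixed by the copy of $\mathfrak S_{n-1}$ permuting $\{X_2,\ldots,X_n\}$, the filtration is automatically an $A_{1,n-1}$-module filtration. Termination, i.e.\ $F_{\ell(\la)}R_\la = \{0\}$, amounts to $X_1^{\ell(\la)} \in I_\la$, which follows by expressing $X_1^{\ell(\la)}$ through an inclusion-exclusion among the generators $e_t(\tI) \in \mathcal C_\la$ with $\tI \ni 1$ of size up to $\ell(\la)$ and $t$ near the top of the allowed range $r \ge t \ge r - d_r(\la)$.

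For the identification of the graded pieces, for each $0 \le j < \ell(\la)$ I would construct a surjective morphism of graded $A_{1,n-1}$-modules
$$\phi_j : R_{\la_{(j+1)}}\left< j \right> \longrightarrow F_j R_\la / F_{j+1} R_\la, \qquad \bar f \longmapsto \overline{X_1^j \cdot f},$$
identifying $R_{\la_{(j+1)}}$ with $\C[X_2,\ldots,X_n]/I_{\la_{(j+1)}}$ via the Tanisaki recipe for $\la_{(j+1)} \in \cP_{n-1}$. Surjectivity is immediate, since any element of $F_j R_\la / F_{j+1} R_\la$ has a representative of the form $X_1^j f$ with $f \in \C[X_2,\ldots,X_n]$. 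The substantive content is well-definedness: for every generator $e_t(\tI)$ of $I_{\la_{(j+1)}}$ with $\tI \subset [2,n]$, one needs
$$X_1^j \cdot e_t(\tI) \in I_\la + X_1^{j+1}\cdot\C[X_1,\ldots,X_n].$$
The strategy is to iterate the elementary identity $e_t(\tI \cup \{1\}) = e_t(\tI) + X_1 \cdot e_{t-1}(\tI)$, rewriting $X_1^j e_t(\tI)$ as an alternating combination of $e_s(\tI \cup \{1\})$'s times lower powers of $X_1$, modulo an error divisible by $X_1^{j+1}$. The resulting $e_s(\tI \cup \{1\})$'s land in $\mathcal C_\la$ thanks to the combinatorial identity $d_r(\la_{(j+1)}) = d_r(\la) - \chi(r \ge \la_{j+1})$, which controls the ranges of $t$ appearing in $\mathcal C_\la$ versus $\mathcal C_{\la_{(j+1)}}$.

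A graded-dimension count then promotes each surjective $\phi_j$ into an isomorphism. The total dimension $\dim R_\la = n!/\prod_i(\la_i!)$ obeys the standard multinomial recursion obtained by deciding which part the element $n$ belongs to, and its graded enhancement
$$\gdim R_\la = \sum_{j=0}^{\ell(\la)-1} q^j \, \gdim R_{\la_{(j+1)}}$$
coincides with the classical branching recursion for the modified Kostka-Foulkes polynomials $K_{\mu\la}(q)$ under restriction $\mathfrak S_n \downarrow \mathfrak S_{n-1}$. Combined with the surjections $\phi_j$, this forces each $\phi_j$ to be an isomorphism of graded $A_{1,n-1}$-modules. The main obstacle is the well-definedness step above: the combinatorial identity between the $d_r$'s, though elementary, must be matched by a precise ideal-theoretic manipulation, and this is the technical heart of \cite{GP92}.
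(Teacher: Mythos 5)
Your construction is essentially the Garsia-Procesi argument: filter $R_\lambda$ by powers of the variable fixed by the $\mathfrak S_{n-1}$ inside $A_{1,n-1}$, exhibit surjections from the smaller Tanisaki quotients onto the graded pieces, and close the count. Since the paper cites \cite{GP92}~\S 1 with a $\Box$ and supplies no proof, there is nothing internal to compare against; your sketch recovers the structure of that argument and correctly flags the ideal-containment lemma behind well-definedness of $\phi_j$ as the technical core.

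One step does, however, contain a genuine circularity as written. You promote the surjections $\phi_j$ to isomorphisms by invoking the ``classical branching recursion for the modified Kostka-Foulkes polynomials,'' but the equality of $\gdim R_\lambda$ with such a polynomial is precisely the output that this filtration (together with Theorem~\ref{GPHL}) is meant to produce, so you cannot feed it in as an input. The non-circular version --- and what Garsia-Procesi actually do --- is to establish the \emph{ungraded} lower bound $\dim_\C R_\lambda \ge n!/\prod_i \lambda_i!$ by an independent argument (a surjection of $R_\lambda$ onto the associated graded coordinate ring of a free $\mathfrak S_n$-orbit of that cardinality, equivalently a monomial spanning set). Your surjections already give the coefficient-wise upper bound
$\gdim R_\lambda \le \gdim\,(F_{\ell(\lambda)}R_\lambda) + \sum_{j} q^{j}\,\gdim R_{\lambda_{(j+1)}}$,
and by the induction hypothesis together with the elementary multinomial recursion $n!/\prod_i\lambda_i! = \sum_{j}(n-1)!/\prod_i(\lambda_{(j+1)})_i!$ the summation term already has total dimension $n!/\prod_i\lambda_i!$; the lower bound then forces both $F_{\ell(\lambda)}R_\lambda = 0$ (so you need not prove $X_1^{\ell(\lambda)}\in I_\lambda$ separately) and each $\phi_j$ to be an isomorphism. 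With that substitution, and with the $e_t(\tI\cup\{1\}) = e_t(\tI) + X_1\, e_{t-1}(\tI)$ manipulation carried out in full, your proposal coincides with the original Garsia-Procesi proof.
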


\begin{thm}[\cite{GP92} Theorem 3.1 and Theorem 3.2]\label{GPprop}
Let $\la \in \cP_n$. It holds:
\begin{enumerate}
\item We have $( R_\la )_{n ( \la ) + 1} =\{ 0 \}$;
\item We have a $\mathfrak S_n$-module isomorphism $R_\la \cong \mathrm{ind}^{\mathfrak S_n} _{\mathfrak S_{\la}} \, \mathsf{triv}$.
\end{enumerate}
In particular, we have $[R_\la:L_{\mu}] \neq 0$ only if $\la \le \mu$. \hfill $\Box$
\end{thm}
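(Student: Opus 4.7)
We proceed by induction on $n$, proving (1) and (2) simultaneously; the case $n = 1$ is immediate. For the inductive step, we invoke Theorem \ref{GP} to obtain the filtration with $\mathrm{gr}^F_j R_\la \cong R_{\la_{(j+1)}} \langle j \rangle$ for $0 \le j < \ell ( \la )$, respecting the $\mathfrak S_{n-1}$-action.

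For (1), the inductive hypothesis gives the top degree of $R_{\la_{(j+1)}} \langle j \rangle$ as $n ( \la_{(j+1)} ) + j$. A short combinatorial check shows $n ( \la_{(j+1)} ) + j \le n ( \la )$ for each $j$: when $\la_{j+1} > \la_{j+2}$, so that decrementing the $(j+1)$-st part needs no reshuffle, one obtains equality $n ( \la_{(j+1)} ) = n ( \la ) - j$; otherwise the forced rearrangement shifts the removed box to some later position $k > j+1$, giving $n ( \la_{(j+1)} ) + j = n ( \la ) - (k - 1 - j) < n ( \la )$. This yields $( R_\la )_{n ( \la ) + 1} = \{ 0 \}$.

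For (2), summing $\dim R_{\la_{(j+1)}} = (n-1)! \, \la_{j+1} / \prod_i \la_i !$ (by induction) over $j = 0, \ldots, \ell ( \la ) - 1$ gives $\dim R_\la = n!/\prod_i \la_i ! = \dim \mathrm{ind}^{\mathfrak S_n}_{\mathfrak S_\la} \mathsf{triv}$. To upgrade this dimension equality to a $\mathfrak S_n$-module isomorphism, realize $\mathrm{ind}^{\mathfrak S_n}_{\mathfrak S_\la} \mathsf{triv}$ as $\C [ V_\la ]$, where $V_\la \subset \C^n$ is the $\mathfrak S_n$-orbit of a point having exactly $\la_j$ coordinates equal to $a_j$ for fixed distinct scalars $a_1, \ldots, a_{\ell ( \la )} \in \C$, and let $J_\la \subset \C [ X_1, \ldots, X_n ]$ be its vanishing ideal. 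The crucial step is to verify the inclusion $I_\la \subset \mathrm{in} ( J_\la )$ of initial ideals under the total degree filtration: for each generator $e_t ( \tI )$ of $I_\la$ one exhibits a polynomial in $J_\la$ whose leading term is $e_t ( \tI )$, constructed from a suitable linear combination of products $\prod_{i \in \tI} ( X_i - a_{\pi ( i )} )$ over appropriate assignments $\pi$. Granting this inclusion, one obtains a $\mathfrak S_n$-equivariant surjection $R_\la \twoheadrightarrow \mathrm{gr} \, \C [ V_\la ] \cong \C [ V_\la ] \cong \mathrm{ind}^{\mathfrak S_n}_{\mathfrak S_\la} \mathsf{triv}$, which the dimension count forces to be an isomorphism. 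The final ``in particular'' clause then follows, as $[ \mathrm{ind}^{\mathfrak S_n}_{\mathfrak S_\la} \mathsf{triv} : L_\mu ]$ equals the Kostka number $K_{\mu \la}$, which vanishes unless $\mu \ge \la$ in dominance.

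The principal obstacle is the inclusion $I_\la \subset \mathrm{in} ( J_\la )$: while the polynomials with prescribed leading term $e_t ( \tI )$ admit a natural combinatorial construction, verifying that they actually vanish on every point of $V_\la$ hinges on the constraint $t \ge r - d_r ( \la )$, which encodes precisely how many of the values $a_1, \ldots, a_{\ell ( \la )}$ can appear among coordinates indexed by $\tI$ subject to the multiplicities imposed by $\la$.
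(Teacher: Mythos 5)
The paper does not prove this theorem; both items are quoted directly from Garsia--Procesi \cite{GP92} (Theorems 3.1 and 3.2), with the ``in particular'' clause read off from the Kostka-number expansion of a permutation module. What you have written is therefore not a reconstruction of an argument appearing in the paper, but rather a sketch of the classical De Concini--Procesi/Tanisaki/Garsia--Procesi argument. Your treatment of item (1) is correct: the filtration of Theorem \ref{GP} plus the inductive bound on $R_{\la_{(j+1)}}$ and the check that $n(\la_{(j+1)}) + j \le n(\la)$ (with equality exactly when no reshuffle occurs, strict inequality of size $k-1-j$ when the decremented box slides to position $k$) gives the degree bound. The dimension count $\dim R_\la = n!/\prod_i \la_i!$ by summing the graded pieces, the identification of $\mathrm{ind}^{\mathfrak S_n}_{\mathfrak S_\la}\mathsf{triv}$ with the coordinate ring $\C[V_\la]$ of the orbit, and the reduction of item (2) to the inclusion $I_\la \subset \mathrm{in}(J_\la)$ followed by the dimension squeeze are all the right strategy, and the observation that the degree filtration on $\C[V_\la]$ splits $\mathfrak S_n$-equivariantly (semisimplicity in characteristic zero) correctly upgrades the surjection to an isomorphism of $\mathfrak S_n$-modules.

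However, there is a genuine gap, and you flag it yourself: the inclusion $I_\la \subset \mathrm{in}(J_\la)$ is asserted, not proved, and it is precisely the technical heart of Tanisaki's and Garsia--Procesi's theorem. One must exhibit, for every generator $e_t(\tI)$ with $|\tI| = r$ and $r \ge t > r - d_r(\la)$, an explicit polynomial vanishing on $V_\la$ whose top-degree form is $e_t(\tI)$; the verification that the candidate polynomials (built from coefficients of $\prod_{i\in\tI}(Y + X_i)$ against $\prod_j (Y+a_j)^{\la_j}$, or equivalently your products $\prod_{i\in\tI}(X_i - a_{\pi(i)})$) actually lie in $J_\la$ depends delicately on the inequality $t \ge r - d_r(\la)$ and a pigeonhole count of how many of the values $a_1,\dots,a_{\ell(\la)}$ the coordinates $\{x_i\}_{i\in\tI}$ can hit subject to the multiplicities in $\la$. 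As it stands your argument is an accurate roadmap rather than a proof. Since the paper deliberately treats this as an imported black box, closing the gap would mean reproducing a substantial portion of \cite{GP92}~\S3, which is probably not the intent here --- but if you do want a self-contained proof, that verification cannot be waved away.
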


In view of \cite[I\!I\!I (2.1)]{Mac95}, we have the Hall-Littlewood $P$- and $Q$- functions in $\La_q$ indexed by $\cP$, that we denote by $Q^{\vee}_\la$ and $Q_\la$, respectively (we changed notation of $P$-functions to $Q^{\vee}$ in order to avoid confusion with projective modules). They satisfy the following relation:
\begin{equation}
  Q^{\vee}_{\la} = b_\la^{-1} Q _\la \in \La_q.\label{QtoQv}
\end{equation}
We also have the big Schur function (\cite[I\!I\!I (4.6)]{Mac95})
$$S_\la := \prod_{i<j} ( 1 - q R_{ij}) Q_\la,$$
where $R_{ij}$ are the raising operators.

\begin{thm}[\cite{GP92} \S 5, particularly (5.24)]\label{GPHL}
For each $\la \in \cP$, the polynomial
$$Q_\la := \sum_{\mu} [K_\la : L_{\mu}]_q \cdot S_{\mu} \in \La_q$$
is the Hall-Littlewood's $Q$-function. \hfill $\Box$
\end{thm}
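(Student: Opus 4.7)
The plan is to compute the twisted graded Frobenius characteristic $\mathrm{ch}_q(M) := \sum_\mu [M:L_\mu]_q S_\mu$ of $K_\la$ by identifying $K_\la$ with the Garsia--Procesi module $R_\la$ (up to partition conjugation), then extracting the recursion hidden in Theorem \ref{GP} and matching it with Macdonald's raising-operator definition of the big Schur functions.

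First, I would identify $K_\la$ with $R_\la$ as graded $A_n$-modules (modulo a conventional sign-twist $\la \leftrightarrow \la'$). The module $R_\la$ is cyclic, with $(R_\la)_0 = \C$, so it has no positive-degree endomorphisms; by the last assertion of Theorem \ref{GPprop}, the only simples appearing are $L_\mu$ with $\la \le \mu$. Applied to the Zelevinsky dual $R_\la^*$, these two properties match the universal properties cutting out $K_{\la'}$ as a graded quotient of $P_{\la'}$, giving the identification after taking duals and conjugating partitions.

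Second, I would convert the filtration (\ref{GPfilt}) into a Grothendieck-group identity in $[\cA]$. Theorem \ref{GP} yields, after restriction to $A_{1,n-1}$,
$$[\mathrm{res}_{1,n-1}\, R_\la] \;=\; \sum_{j=1}^{\ell(\la)} q^{j-1}\, \bigl[\, L_{(1)} \boxtimes R_{\la_{(j)}}\,\bigr].$$
Combining this with Lemma \ref{id-indres} and Theorem \ref{Sind}, it becomes a concrete recursion for $\mathrm{ch}_q(K_\la)$ under the coproduct $\Delta$ of the Hopf algebra $\La_q$.

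Finally, I would check that the Hall-Littlewood polynomials satisfy the same recursion, reducing the theorem to a pure symmetric-function identity. The raising-operator identity $S_\la = \prod_{i<j}(1-qR_{ij}) Q_\la$ is triangular in dominance order, hence invertible to a well-defined expansion $Q_\la = \sum_\mu c_{\la\mu}(q)\, S_\mu$, and the classical branching/Pieri rule for Hall-Littlewood polynomials shows that the coefficients $c_{\la\mu}(q)$ satisfy the recursion from the previous step. An induction on $|\la|$, with base case $K_{(1)} = L_{(1)}$ and $Q_{(1)} = S_{(1)}$, then closes the argument. The main obstacle I expect is bookkeeping: pinning down the conjugation and grading-shift conventions between $K_\la$, $R_\la$, and Macdonald's normalization of $Q_\la$, and correctly tracking how the raising operators permute partitions through the inductive step.
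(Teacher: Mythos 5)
The paper does not actually prove this theorem: the statement carries a $\Box$ because it is cited wholesale to Garsia--Procesi \cite{GP92}~(5.24). So your sketch is by necessity a re-derivation, and it is in the right spirit (it tracks roughly what Garsia--Procesi do in their \S 5). But two of your steps hide genuine work, and one of them does not close as written.

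First, the identification of $K_\la$ with a grading shift of $R_\la^*$ is not a formal consequence of matching universal properties. What you get for free from Lemma \ref{Rsoc} and Theorem \ref{GPprop} is an \emph{injection} $K_\la \hookrightarrow R_\la^*\left<n(\la)\right>$ (both have $L_{(n)}\left<n(\la)\right>$ in the socle, both are supported on $\mu \ge \la$, both have $L_\la$ with multiplicity one in degree zero). To upgrade this to an isomorphism you must prove that $\mathsf{hd}\,R_\la^*\left<n(\la)\right>$ is exactly $L_\la$, equivalently that $\mathsf{soc}\,R_\la$ is exactly $L_\la$ in top degree. This is precisely the content of Proposition~\ref{RK-id}, whose proof is a non-trivial induction on $n$ using the Garsia--Procesi filtration, Littlewood--Richardson bookkeeping on which $L_{\la_{(j)}}$ can sit in the head of the restriction, and a separate case analysis for $\mu = (m^r)$. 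That proposition should be an explicit ingredient, not a throwaway "universal properties match." (Also, in this paper's normalization $L_{(n)} \cong \mathsf{triv}$ and $R_\la \cong \mathrm{ind}^{\mathfrak S_n}_{\mathfrak S_\la}\mathsf{triv}$, so there is \emph{no} conjugation: it is $R_\la^*\left<n(\la)\right> \cong K_\la$, same $\la$, not $\la'$.)

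Second, and more seriously, the closing induction is underdetermined. The Garsia--Procesi filtration controls only the $(1,n-1)$-piece of the coproduct, i.e.\ $\partial/\partial p_1$ applied to the twisted characteristic. Matching $\partial/\partial p_1\,\mathrm{ch}_q(K_\la)$ with $\partial/\partial p_1\,Q_\la$ and invoking the inductive hypothesis only gives $\partial/\partial p_1\bigl(\mathrm{ch}_q(K_\la) - Q_\la\bigr) = 0$, and $\partial/\partial p_1$ has a large kernel (spanned by the $p_\kappa$ with no part equal to $1$). Even after imposing the dominance triangularity from Lemma~\ref{Kmult}, the kernel of $\partial/\partial p_1$ can meet $\mathrm{span}\{S_\mu : \mu > \la\}$ nontrivially: already at $q=0$ and $\la = (2,1,1)$, one has $s_{(4)} - s_{(3,1)} + s_{(2,2)} \in \ker\bigl(\partial/\partial p_1\bigr)$, lying entirely in $\mathrm{span}\{s_\mu : \mu > (2,1,1)\}$. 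So "match the first-order branching rule and induct on $|\la|$" does not pin down $Q_\la$. You would need a stronger invariant --- the Hall pairing $\left< \bullet, Q^\vee_\mu \right>$ (which is what drives Proposition~\ref{orth-id} in the paper), or the full multi-step coproduct data, or Garsia--Procesi's own generating-function computation from \S 5 --- before the induction closes.
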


\begin{thm}[\cite{Mac95} I\!I\!I (4.9)]\label{defHall}
There exists a $\Q ( q )$-linear bilinear form $\left< \bullet, \bullet \right>$ on $\La_q$ $($referred to as the Hall inner product$)$ characterized as
\begin{equation}
\left< Q^{\vee}_{\la}, Q_{\mu} \right> = \delta_{\la,\mu} = \left< S_\la, s_{\mu} \right>\label{Horth}
\end{equation}
for each $\la, \mu \in \cP$. \hfill $\Box$
\end{thm}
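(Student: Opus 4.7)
The plan is to define the form explicitly on the power-sum basis $\{p_\la\}_{\la \in \cP}$ and then verify the two orthogonality relations as consequences of suitable Cauchy identities. Since the power sums form a $\Q(q)$-basis of $\La_q$, setting
$$\left< p_\la, p_\mu \right> := \delta_{\la,\mu}\, z_\la \prod_{i \ge 1}(1-q^{\la_i})^{-1}$$
and extending $\Q(q)$-bilinearly produces a symmetric form on $\La_q$. Uniqueness of a pairing satisfying either half of \eqref{Horth} is automatic, since each of $\{Q^\vee_\la\}_\la$, $\{Q_\la\}_\la$, $\{s_\la\}_\la$, $\{S_\la\}_\la$ is a $\Q(q)$-basis of $\La_q$; thus the task reduces to checking that the form above satisfies both displayed identities.

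The reproducing kernel of this form is computed by the standard power-sum expansion
$$\prod_{i,j} \frac{1 - q x_i y_j}{1 - x_i y_j} = \exp\!\left(\sum_{n \ge 1} \frac{1 - q^n}{n}\, p_n(x)\, p_n(y)\right) = \sum_{\la \in \cP} \frac{\prod_i (1-q^{\la_i})}{z_\la}\, p_\la(x)\, p_\la(y),$$
so that a pair of bases $\{u_\la\}, \{v_\la\}$ of $\La_q$ is dual for $\left<\bullet,\bullet\right>$ if and only if $\sum_\la u_\la(x)\, v_\la(y)$ equals this kernel. The first orthogonality $\left< Q^\vee_\la, Q_\mu \right> = \delta_{\la,\mu}$ then follows directly from the Hall-Littlewood Cauchy identity \cite[I\!I\!I (4.4)]{Mac95}
$$\sum_\la Q^\vee_\la(x)\, Q_\la(y) = \prod_{i,j} \frac{1 - q x_i y_j}{1 - x_i y_j}.$$

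For the second orthogonality $\left< S_\la, s_\mu \right> = \delta_{\la,\mu}$, I would establish the companion Cauchy identity $\sum_\la S_\la(x)\, s_\la(y) = \prod_{i,j}\frac{1 - q x_i y_j}{1 - x_i y_j}$. The plan is to combine the raising-operator definition $S_\la = \prod_{i<j}(1 - q R_{ij}) Q_\la$ with the Kostka-Foulkes transition $s_\la = \sum_\mu K_{\la\mu}(q)\, Q^\vee_\mu$, substitute these into the first Cauchy identity, and match coefficients; this transports duality from the pair $(Q^\vee, Q)$ to the pair $(S, s)$.

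The main obstacle is the last step: verifying that the raising-operator expression $\prod_{i<j}(1 - q R_{ij}) Q_\la$ produces precisely the basis dual to $\{s_\la\}_\la$. The operator $R_{ij}$ acts on partition coordinates rather than on $\La_q$ directly, so care is needed in interpreting the product and in tracking the factor $b_\la(q)$ that distinguishes $Q_\la$ from $Q^\vee_\la$ via \eqref{QtoQv}. This is essentially the content of Macdonald's \cite[I\!I\!I \S 4]{Mac95}, and is where the bulk of the work lies.
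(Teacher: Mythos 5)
The paper does not prove this statement: the terminal $\Box$ and the bracketed citation mean it is taken verbatim from Macdonald \cite{Mac95}, III (4.9). So there is no ``paper's own proof'' to compare against; what you have written is a reconstruction of Macdonald's argument. With that understood, your setup is correct and standard: the definition $\left< p_\la, p_\mu\right> = \delta_{\la\mu} z_\la \prod_i (1-q^{\la_i})^{-1}$, the kernel expansion $\prod_{i,j}\frac{1-qx_iy_j}{1-x_iy_j} = \sum_\la z_\la(q)^{-1} p_\la(x)p_\la(y)$, the dual-basis criterion, and the Hall--Littlewood Cauchy identity for the pair $(Q^\vee, Q)$ are exactly Macdonald's (4.1)--(4.4). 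Uniqueness of the form from either half of (\ref{Horth}) is also handled correctly.

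The soft spot is, as you flag, the second orthogonality. Your plan --- substitute $S_\la = \prod_{i<j}(1-qR_{ij})Q_\la$ and $s_\mu = \sum_\nu K_{\mu\nu}(q)\, Q^\vee_\nu$ into the Cauchy identity and ``match coefficients'' --- does not close without substantial extra work: it reduces to showing that the matrix $C$ defined by $\prod_{i<j}(1-qR_{ij})Q_\la = \sum_\nu c_{\la\nu}Q_\nu$ is the transpose inverse of the Kostka--Foulkes matrix, and that is essentially equivalent to the theorem you are trying to prove, so nothing has been gained. Macdonald's actual route is cleaner and avoids the Kostka--Foulkes matrix entirely: combine $S_\la = \prod_{i<j}(1-qR_{ij})Q_\la$ with the raising-operator expression $Q_\la = \prod_{i<j}\frac{1-R_{ij}}{1-qR_{ij}}\, q_\la$ (where $q_n := Q_{(n)}$) to get the determinantal form $S_\la = \det(q_{\la_i - i + j})$; observe that $\{q_\la\}$ and $\{m_\mu\}$ are $\left<\bullet,\bullet\right>$-dual because the ring endomorphism $p_n \mapsto (1-q^n)p_n$ is self-adjoint and sends $h_n$ to $q_n$; and then the Cauchy--Binet/Jacobi--Trudi argument that gives $\sum_\la s_\la(x)s_\la(y) = \prod(1-x_iy_j)^{-1}$ in the classical case carries over verbatim to give $\sum_\la S_\la(x)s_\la(y) = \prod_{i,j}\frac{1-qx_iy_j}{1-x_iy_j}$. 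If you want a self-contained proof rather than a pointer to \cite{Mac95}, replace your last paragraph with this argument.
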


%

\begin{lem}\label{Rsoc}
For each $\la \in \cP_n$, we have $[R_\la : L _\la ]_q = q^{n ( \la )}$.
\end{lem}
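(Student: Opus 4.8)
The plan is to pin down the top nonzero graded piece of $R_\la$ and identify its $\mathfrak S_n$-structure. By Theorem \ref{GPprop}(1) we know $(R_\la)_{n(\la)+1} = \{0\}$, so the grading of $R_\la$ is concentrated in degrees $0, 1, \ldots, n(\la)$. The first step is therefore to show that $(R_\la)_{n(\la)}$ is nonzero and that, as an $\mathfrak S_n$-module, it is exactly one copy of $L_\la$ (equivalently, $[R_\la : L_\mu]$ restricted to the top degree $n(\la)$ is $\delta_{\la,\mu}$). Combined with Theorem \ref{GPprop}(2) and standard facts about Kostka numbers — namely that in $\mathrm{ind}^{\mathfrak S_n}_{\mathfrak S_\la}\mathsf{triv} \cong \bigoplus_\mu L_\mu^{\oplus K_{\mu\la}}$ one has $K_{\mu\la} \neq 0$ only for $\mu \ge \la$ and $K_{\la\la} = 1$ — this will give $[R_\la : L_\la] = 1$ (the class $L_\la$ appears exactly once in all of $R_\la$), and then it suffices to show this single occurrence sits in degree $n(\la)$, which yields $[R_\la:L_\la]_q = q^{n(\la)}$.

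The cleanest route to locating the $L_\la$-isotypic piece is induction on $n$ (or on $\ell(\la)$) using the Garsia-Procesi filtration of Theorem \ref{GP}. For $\la = (n)$ we have $R_{(n)} = \C$ concentrated in degree $0 = n((n))$, and for $\la = (1^n)$ the module $R_{(1^n)}$ is the coinvariant algebra, whose top degree is $\binom{n}{2} = n((1^n))$ carrying the sign representation $L_{(1^n)}$; these are the base cases. For the inductive step, restrict to $A_{1,n-1}$: the filtration $\{F_j R_\la\}$ has $\mathrm{gr}^F_j R_\la \cong R_{\la_{(j+1)}}\langle j\rangle$ as $A_{1,n-1}$-modules, where $\la_{(j+1)}$ runs over the partitions of $n-1$ obtained by removing a box from row $j+1$ of $\la$. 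Using the branching rule $\mathrm{Res}^{\mathfrak S_n}_{\mathfrak S_{n-1}} L_\la = \bigoplus_{\nu} L_\nu$ (sum over $\nu = \la_{(k)}$), an $L_\la$ inside $R_\la$ would restrict to a sum of such $L_\nu$'s distributed among the graded pieces $R_{\la_{(j+1)}}\langle j\rangle$; by induction the top-degree occurrence of $L_{\la_{(j+1)}}$ in $R_{\la_{(j+1)}}$ is in degree $n(\la_{(j+1)})$, so inside $\mathrm{gr}^F_j R_\la$ it sits in degree $n(\la_{(j+1)}) + j$. The key combinatorial identity to verify is that
$$\max_{j} \bigl( n(\la_{(j+1)}) + j \bigr) = n(\la),$$
with the maximum attained precisely when $j+1$ is the last row of $\la$ of its length — i.e. removing the box that does least damage to $n(\cdot)$ — and that this top occurrence assembles into a single genuine $L_\la \subset (R_\la)_{n(\la)}$ rather than dying in the associated-graded. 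Checking $n(\la) - n(\la_{(j+1)}) = \ell(\la) - (j+1)$ for an appropriate choice of removable box, and $\le$ for all others, is an elementary computation with the formula $n(\la) = \sum_i \binom{\la'_i}{2}$.

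The main obstacle I anticipate is the last point: passing from information about the associated graded of the Garsia-Procesi filtration back to $R_\la$ itself. A priori the top-degree $L_\la$ that one sees in some $\mathrm{gr}^F_j R_\la$ could fail to lift to $R_\la$ in that degree, or the multiplicity could be affected by contributions from several graded pieces landing in the same degree. To control this I would combine the filtration argument with the already-known global statement $[R_\la : L_\la] = 1$ from Theorem \ref{GPprop}(2): since the total multiplicity of $L_\la$ in $R_\la$ is exactly one, it is enough to show that some graded piece of $R_\la$ in degree $\ge n(\la)$ contains $L_\la$; degree reasons ($(R_\la)_{>n(\la)} = 0$) then force it into degree exactly $n(\la)$, and uniqueness of the multiplicity removes any ambiguity about cancellation in the associated graded. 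Thus the only real content is the combinatorial maximization above together with a nonvanishing check, e.g. that $(R_\la)_{n(\la)} \neq 0$ — which itself follows by tracking the top class through the filtration down to a base case.
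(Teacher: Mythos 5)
Your overall skeleton is right — reduce to three ingredients: $[R_\la : L_\la] = 1$ from Theorem~\ref{GPprop}(2) and Kostka numbers, $(R_\la)_{>n(\la)} = 0$ from Theorem~\ref{GPprop}(1), and the claim that the (unique) occurrence of $L_\la$ sits in degree $\ge n(\la)$. The first two steps match the paper exactly. It is the third step where your proposal diverges from the paper's proof and where it has a genuine gap.

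The paper does \emph{not} use the Garsia--Procesi filtration here. Instead it notes that $L_\la$ contains a vector on which $\mathfrak S_{\la'}$ acts by the sign character (Frobenius reciprocity and \cite[p.~115]{Mac95}), and that any nonzero $\mathfrak S_{\la'}$-anti-invariant in $\C[X_1,\ldots,X_n]$ has degree at least $\sum_j \binom{\la'_j}{2} = n(\la)$, since the minimal one is a product of Vandermonde determinants in the $\la'_j$-blocks of variables. Since $R_\la$ is a $\mathfrak S_n$-equivariant quotient of $\C[X_1,\ldots,X_n]$ and $\mathfrak S_n$-modules are semisimple, any $L_\la \subset (R_\la)_m$ lifts to $(\C[X_1,\ldots,X_n])_m$, forcing $m \ge n(\la)$. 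That is the whole argument.

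Your filtration route runs into the following obstacle, which you partly flag but do not close. Theorem~\ref{GP} gives only an $A_{1,n-1}$-module (hence $\mathfrak S_{n-1}$-equivariant) filtration of $R_\la$, so the associated graded tells you about $[\mathrm{Res}^{\mathfrak S_n}_{\mathfrak S_{n-1}} R_\la : L_\nu]_q$ but carries no $\mathfrak S_n$-module information directly. The restriction $\mathrm{Res}^{\mathfrak S_n}_{\mathfrak S_{n-1}} L_\la$ is a sum of several $L_{\la_{(k)}}$, and each of these $L_\nu$ also occurs in $\mathrm{Res}^{\mathfrak S_n}_{\mathfrak S_{n-1}} L_\mu$ for other $\mu > \la$ with $[R_\la : L_\mu] \neq 0$; so locating a single $L_{\la_{(k)}}$ in some degree of the associated graded does not pin down the degree of $L_\la$. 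Your fallback --- that it suffices to check $(R_\la)_{n(\la)} \neq 0$ --- is also not enough: a priori the top graded piece could be built entirely from $L_\mu$ with $\mu > \la$. To rescue the filtration approach you would need an additional input bounding the degrees in which $L_\mu$ ($\mu > \la$) can appear in $R_\la$, which is essentially the content of Proposition~\ref{RK-id}, proved \emph{after} and \emph{using} this lemma. There is also a small computational slip: for a removable box in row $j+1$ one gets $n(\la) - n(\la_{(j+1)}) = j$, not $\ell(\la) - (j+1)$; the maximization claim $\max_j \left( n(\la_{(j+1)}) + j \right) = n(\la)$ is nonetheless correct.
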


\begin{proof}
By \cite[p115]{Mac95} and the Frobenius reciprocity, $L_\la$ contains a vector on which $\mathfrak S_{\la'}$ acts by sign representation. Since the Vandermonde determinant offers the minimal degree realization of the sign representations of each $\mathfrak S_{\la'_j}$ ($1 \le j \le \la_1$), we find that $\mathrm{Hom}_{\mathfrak S_n} ( L_\la, ( R_\la )_m ) \neq 0$ only if $m \ge n ( \la )$. It must be strict by Theorem \ref{GPprop} 1).
\end{proof}

\begin{prop}[\cite{Kat15} Theorem A.4 and Corollary A.3]\label{char-K}
We have
$$\mathrm{ext}^1_{A_n} ( K_\la, L_\mu ) = 0 \hskip 5mm \la \not\ge \mu.$$
For each $\la \in \cP_n$, the head of $K_\la$ is $L_\la$, and the socle of $K_\la$ is $L_{(n)} \left< n ( \la )\right>$.
\end{prop}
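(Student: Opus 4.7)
My plan is to identify $K_\la$ with the shifted graded dual $R_\la^{*}\langle n(\la)\rangle$ of the Garsia--Procesi module, and then to read off the three claims from the explicit structure of $R_\la$ provided by Lemma \ref{Rsoc}, Theorem \ref{GP}, Theorem \ref{GPprop}, and Theorem \ref{GPHL}.

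The head statement is immediate: $K_\la$ is a quotient of the projective cover $P_\la$ of $L_\la$ which surjects onto $L_\la$ by construction, so $\mathsf{hd}(K_\la) = L_\la$ in degree $0$. For the identification, I first verify that the socle of $R_\la$ is simple of type $L_\la$ and sits in the top degree $n(\la)$: by Theorem \ref{GPprop}(1) the top graded piece lies in degree $n(\la)$ and is automatically contained in $\mathsf{soc}(R_\la)$, and by Lemma \ref{Rsoc} it contains exactly one copy of $L_\la$; a Frobenius-type argument on the Garsia--Procesi ring rules out further socle contributions in lower degrees. Dualizing and shifting, $R_\la^{*}\langle n(\la)\rangle$ is then a cyclic $A_n$-module with head $L_\la$ in degree $0$, hence a quotient of $P_\la$. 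By Theorem \ref{GPprop}(2) its composition factors $L_\mu$ satisfy $\mu \ge \la$, and by Lemma \ref{Rsoc} the copy of $L_\la$ is concentrated in degree $0$; thus $R_\la^{*}\langle n(\la)\rangle$ satisfies the universal property of $K_\la$ as the maximal such quotient of $P_\la$, yielding a surjection $K_\la \twoheadrightarrow R_\la^{*}\langle n(\la)\rangle$. Matching graded characters by combining Theorem \ref{GPHL} with the Hall--Littlewood identities of \cite{Mac95} upgrades this surjection to an isomorphism. The socle of $K_\la$ is then $(\mathsf{hd}\, R_\la)^{*}\langle n(\la)\rangle$; since $R_\la$ is generated by the class of $1$ in degree $0$, which spans $L_{(n)}$, we have $\mathsf{hd}(R_\la) = L_{(n)}$, and the claimed $\mathsf{soc}(K_\la) = L_{(n)}\langle n(\la)\rangle$ follows.

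For the Ext-vanishing, applying $\hom_{A_n}(-, L_\mu)$ to $0 \to J \to P_\la \to K_\la \to 0$, using projectivity of $P_\la$ together with $\hom_{A_n}(P_\la, L_\mu) = 0$ for $\mu \ne \la$, gives $\mathrm{ext}^1_{A_n}(K_\la, L_\mu) \cong \hom_{A_n}(J, L_\mu)$. Via Theorem \ref{ad} and the identification $K_\la^{*} \cong R_\la\langle -n(\la)\rangle$, this reduces (up to a degree shift) to $\mathrm{ext}^1_{A_n}(L_\mu, R_\la) = 0$ for $\la \not\ge \mu$, which I would prove by induction on $n$ using the Garsia--Procesi filtration of Theorem \ref{GP} to decompose the restriction of $R_\la$ into pieces $R_{\la_{(j+1)}}\langle j\rangle$, combined with the Frobenius--Nakayama reciprocity of Theorem \ref{fnr} to transfer Ext-computations to the $A_{1,n-1}$-level. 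The main obstacle is the character matching in the identification step, since Theorem \ref{GPHL} expresses $Q_\la$ in the big Schur basis $\{S_\mu\}$ while the graded character of $R_\la$ is most naturally written in the ordinary Schur basis $\{s_\mu\}$; bridging these two basis expansions and verifying that $\mathsf{soc}(R_\la)$ is indeed simple requires careful symmetric function bookkeeping, but once this is done the Ext-vanishing follows with the inductive mechanism above.
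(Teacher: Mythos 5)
Your proof inverts the logical order of the paper: the paper proves Proposition \ref{char-K} by citing \cite[Theorem A.4 and Corollary A.3]{Kat15}, and then \emph{uses} it (specifically the simplicity of $\mathsf{soc}\,K_\la$) in the subsequent proof of Proposition \ref{RK-id}, i.e.\ of the identification $K_\la \cong R_\la^*\langle n(\la)\rangle$. You instead want to establish that identification first and then read off the claims of \ref{char-K}. That is a legitimately different strategy, but as written it has a genuine gap precisely where the paper's inductive argument for \ref{RK-id} does real work.

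Concretely: (i) You assert that ``a Frobenius-type argument on the Garsia--Procesi ring rules out further socle contributions in lower degrees,'' i.e.\ that $\mathsf{soc}\,R_\la$ is simple and concentrated in degree $n(\la)$. None of the cited inputs (Lemma \ref{Rsoc}, Theorems \ref{GP}, \ref{GPprop}) delivers this; $[R_\la:L_\la]_q = q^{n(\la)}$ only says $L_\la$ sits in the top degree, not that nothing else lies in the socle. This Gorenstein/Poincar\'e-duality-type property of $R_\la$ is exactly what the paper obtains as a \emph{consequence} of knowing $\mathsf{soc}\,K_\la$ from \cite{Kat15} and then proving \ref{RK-id} by induction, so your argument is circular at this point unless you supply an independent proof. (ii) The ``matching graded characters'' step invokes Theorem \ref{GPHL}, but that theorem as stated computes $[K_\la:L_\mu]_q$ --- precisely the unknown quantity if you have not yet established $K_\la \cong R_\la^*\langle n(\la)\rangle$; using it to prove the identification begs the question. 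The $R_\la$-side character is known from \cite{GP92}, but the $K_\la$-side multiplicities $[K_\la:L_\mu]_q$ are \emph{a priori} only constrained by Lemma \ref{Kmult}, which is far from a full determination. (iii) For the $\mathrm{ext}^1$-vanishing, the reduction via Theorem \ref{ad} to $\mathrm{ext}^1_{A_n}(L_\mu, R_\la\langle -n(\la)\rangle)=0$ is fine, but the proposed induction ``transfer to the $A_{1,n-1}$-level via Theorem \ref{fnr}'' is not spelled out: Frobenius--Nakayama moves between $\mathrm{ext}(\mathrm{ind}\,M,N)$ and $\mathrm{ext}(M,\mathrm{res}\,N)$, and $L_\mu$ is not itself induced, so an extra step (e.g.\ resolving $L_\mu$ by induced modules and controlling the resulting long exact sequences) is required and missing.

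The head statement ($\mathsf{hd}\,K_\la = L_\la$) you do prove correctly and essentially for free from $K_\la$ being a nonzero quotient of $P_\la$, which matches the paper's usage. But the socle claim and the $\mathrm{ext}^1$-vanishing are the substance of the proposition, and for those your route would need the De Concini--Procesi/Tanisaki identification \ref{RK-id} established \emph{before} \ref{char-K}, which you have not done.
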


\begin{proof}
By \cite[Theorem A.4]{Kat15}, the module $K_\la$ is isomorphic to the module $M_{\la}$ constructed there. They have the properties in the assertions by construction and \cite[Theorem A.4]{Kat15}.
\end{proof}

\begin{prop}[De Concini-Procesi \cite{DP81}, Tanisaki \cite{Tan82}]\label{RK-id}
We have an isomorphism $R_\la^* \left< n ( \la ) \right> \cong K_\la$ as graded $A_n$-modules.
\end{prop}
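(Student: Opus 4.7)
My plan is to construct a surjection $K_\la \twoheadrightarrow R_\la^*\langle n(\la)\rangle$ from the universal property of $K_\la$, and then to deduce that it is an isomorphism from the simple socle structure of $K_\la$. The first step is to identify the head of $R_\la^*\langle n(\la)\rangle$. Since $R_\la$ is cyclic with $(R_\la)_0 = L_{(n)}$, duality immediately gives $\mathsf{soc}\, R_\la^*\langle n(\la)\rangle = L_{(n)}\langle n(\la)\rangle$. For the head, I would prove by induction on $n$ that $(R_\la)_{n(\la)} \cong L_\la$ as an $\mathfrak S_n$-module, which dualizes to $\mathsf{hd}\, R_\la^*\langle n(\la)\rangle = L_\la$ in degree $0$. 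The inductive step uses the Garsia-Procesi filtration (Theorem \ref{GP}): each piece $\mathrm{gr}^F_j R_\la \cong R_{\la_{(j+1)}}\langle j\rangle$ has top in degree $j + n(\la_{(j+1)}) = n(\la)$, and by induction the top of $R_{\la_{(j+1)}}$ is $L_{\la_{(j+1)}}$, so $(R_\la)_{n(\la)} \cong \bigoplus_j L_{\la_{(j+1)}}$ as an $\mathfrak S_{n-1}$-module, which equals $\mathrm{Res}^{\mathfrak S_n}_{\mathfrak S_{n-1}} L_\la$ by the Specht branching rule. Since Lemma \ref{Rsoc} gives $L_\la$ as a direct $\mathfrak S_n$-summand of $(R_\la)_{n(\la)}$ and any additional $L_\mu$ with $\mu > \la$ would strictly enlarge the $\mathfrak S_{n-1}$-character, we conclude $(R_\la)_{n(\la)} \cong L_\la$.

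With the head identified, $R_\la^*\langle n(\la)\rangle$ is a cyclic quotient of $P_\la$. Theorem \ref{GPprop}(2) restricts the constituents to $L_\mu$ with $\mu \ge \la$, and Lemma \ref{Rsoc} confines $L_\la$ to degree $0$ with multiplicity $1$; these match the universal property of $K_\la$, so the quotient factors as $P_\la \twoheadrightarrow \widetilde K_\la \twoheadrightarrow K_\la \twoheadrightarrow R_\la^*\langle n(\la)\rangle$. For the injectivity of the final map, I apply Proposition \ref{char-K}: $\mathsf{soc}\, K_\la = L_{(n)}\langle n(\la)\rangle$ is simple and concentrated in degree $n(\la)$. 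Since $K_\la$ is finite-dimensional (via the identification $K_\la \cong M_\la$ of \cite{Kat15} recalled in Proposition \ref{char-K}), the top graded piece $(K_\la)_{n(\la)}$ is annihilated by $A_n^+$ and lies in the socle, so the graded multiplicity of $L_{(n)}\langle n(\la)\rangle$ in $K_\la$ equals $1$. Any nonzero kernel of $K_\la \twoheadrightarrow R_\la^*\langle n(\la)\rangle$ would then contain this simple socle, whereupon $L_{(n)}\langle n(\la)\rangle$ would vanish in the quotient—contradicting the fact that $R_\la^*\langle n(\la)\rangle$ itself has $L_{(n)}\langle n(\la)\rangle$ as its socle.

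The main obstacle is the inductive head identification: the GP filtration is only a filtration by $A_{1,n-1}$-submodules, so directly it controls only the $\mathfrak S_{n-1}$-structure of $(R_\la)_{n(\la)}$, and promoting this to determine the full $\mathfrak S_n$-decomposition requires the rigidity of the branching rule together with Lemma \ref{Rsoc}. A secondary technical point is the finite-dimensionality of $K_\la$, which we take from \cite{Kat15}.
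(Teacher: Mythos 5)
There is a genuine gap at the step where you pass from the top graded piece of $R_\la$ to the head of its dual. Knowing $(R_\la)_{n(\la)} \cong L_\la$ dualizes only to $(R_\la^*\langle n(\la)\rangle)_0 \cong L_\la$, i.e., to a statement about the degree-$0$ component. It does not give $\mathsf{hd}\, R_\la^*\langle n(\la)\rangle = L_\la$. What duality swaps is head with socle, not top degree with bottom degree: $\mathsf{hd}\, R_\la^*\langle n(\la)\rangle = (\mathsf{soc}\, R_\la)^*\langle n(\la)\rangle$, so what you actually need is $\mathsf{soc}\, R_\la = L_\la\langle n(\la)\rangle$. Your computation only shows that the top graded piece $(R_\la)_{n(\la)}$, which lies in the socle, equals $L_\la$; it does not preclude socle components of $R_\la$ in degrees strictly below $n(\la)$, and any such component would contribute an extra simple summand (necessarily some $L_\mu\langle d\rangle$ with $\mu>\la$ and $d>0$) to the head of $R_\la^*\langle n(\la)\rangle$. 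Without ruling those out, $R_\la^*\langle n(\la)\rangle$ need not be a cyclic quotient of $P_\la$, and the desired surjection $K_\la\twoheadrightarrow R_\la^*\langle n(\la)\rangle$ is not yet available. This is precisely the hard part of the proposition: the paper's proof is devoted entirely to showing that the head of $R_\la^*\langle n(\la)\rangle$ is only $L_\la$, and it does so by a rather delicate induction on $n$, using Theorem~\ref{GP} to bound the $A_{1,n-1}$-module head by grading shifts of $L_{\la_{(j)}}$ (via the inductive identification $R_{\la_{(j)}}^*\langle n(\la_{(j)})\rangle\cong K_{\la_{(j)}}$), and then a case analysis on partitions $\mu>\la$ whose restriction could produce such a piece. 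Your injectivity step, by contrast, is essentially the same socle argument as in the paper, just deployed in the other direction.

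A secondary, more minor, issue: the claim that ``each piece $\mathrm{gr}^F_j R_\la\cong R_{\la_{(j+1)}}\langle j\rangle$ has top in degree $j+n(\la_{(j+1)})=n(\la)$'' is false for general $j$. One computes $n(\la)-n(\la_{(j+1)})=\la'_{\la_{j+1}}-1$, so the equality $j+n(\la_{(j+1)})=n(\la)$ holds only when $j+1=\la'_{\la_{j+1}}$, i.e., when $j+1$ is the last row with its value (a valid corner-box removal). The other pieces contribute nothing in degree $n(\la)$, so the intended conclusion $(R_\la)_{n(\la)}\cong\bigoplus_{\text{corners}}L_{\la_{(j+1)}}=\mathrm{Res}^{\mathfrak S_n}_{\mathfrak S_{n-1}}L_\la$ is still correct once combined with Theorem~\ref{GPprop}(1) and Lemma~\ref{Rsoc}, but the statement as written needs to be repaired. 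This part of your argument, once cleaned up, is a nice observation; it is only the passage from the top graded piece to the full socle that is missing, and that is where the real content lies.
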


\begin{proof}
By Lemma \ref{Rsoc}, $R _\la^* \left< n ( \la ) \right>$ is a graded $A_n$-module such that $L_\la \subset \mathsf{hd} \, R _\la^* \left< n ( \la ) \right>$ and $[R_\la ^* \left< n ( \la ) \right> : L_{\mu}]_q = 0$ if $\mu \not\ge \la$ and $[R_\la^* \left< n ( \la ) \right> : L_\la]_q = 1$. Thus, we obtain a map $K_\la \rightarrow R _\la^* \left< n ( \la ) \right>$ of graded $A_n$-modules. This map is injective as they share $L_{(n)} \left< n( \la )\right>$ as their socles.

We prove that $K_\la \subset R_\la^* \left< n ( \la ) \right>$ is an equality for every $\la \in \cP_n$ by induction on $n$. The case $n=1$ is clear as the both are $\C$. Thanks to Theorem \ref{GP} and the induction hypothesis, we deduce that a (graded) direct summand of the head of $R_\la^* \left< n ( \la ) \right>$ as $A_{1,n-1}$-module must be of the shape $L_{\la_{(j)}} \left< d \right>$ for $1 \le j \le \ell ( \la )$  and $d \ge 0$. The module $L_{\la_{(j)}} \left< d \right>$ arises as the restriction of a (graded) $\mathfrak S_n$-module $L_{\mu} \left< d \right>$ ($\mu \in \cP_n$) such that $\la_{(j)} = \mu_{(k)}$ for $1 \le k \le \ell ( \mu )$. In case $\mu = \la$, then $[R_\la^* \left< n ( \la ) \right> : L_\la]_q = 1$ forces $L_{\la_{(j)}} \left< d \right> \subset L_\la \subset \mathsf{hd} \, K_{\la} \subset \mathsf{hd} \, R_\la^* \left< n ( \la ) \right>$.

From this, it is enough to assume $\mu \neq \la$ to conclude that $L_{\la_{(j)}} \left< d \right>$ does not yield a non-zero module of $\mathsf{hd} \, R_\la^* \left< n ( \la ) \right> / L _\la$. By Theorem \ref{GPprop} 2), we can assume $\mu > \la$. Hence, $\mu$ is obtained from $\la$ by moving one box in the Young diagram to some strictly larger entries.

In case $\mu$ is not the shape $(m^r)$, there exists $1 \le k \le \ell ( \mu )$ such that $\mu_{(k)} \neq \la_{(j)}$ for every $1 \le j \le \ell ( \la )$. It follows that $L_{\la_{(j)}} \left< d \right> \subset L _\mu \left< d \right> \subset R_\la^* \left< n ( \la ) \right>$ contains a $\mathfrak S_{n-1}$-module that is not in the head of $R_\la^* \left< n ( \la ) \right>$ as $A_{1,n-1}$-modules. Thus, this case does not occur.

In case $\mu$ is of the shape $(m^r)$, then we have $\la = ( m^{r-1},(m-1),1 )$ and $\la_{(j)} = ( m^{r-1},(m-1))$. In this case, we have $j = r+1$. In particular, grading shifts of $R_{\la_{(j)}}^*$ appears in the filtration of $R_{\la}^*$ afforded by Theorem \ref{GP} only once, and its head is a part of $L_{\la}$ by counting the degree. Therefore, $L_{\la_{(j)}} \left< d \right>$ contributes zero in $\mathsf{hd} \, R_\la^* \left< n ( \la ) \right> / L _\la$.

From these, we conclude that $\mathsf{hd} \, R_\la^* \left< n ( \la ) \right> = L_\la$ by induction hypothesis. This forces $K_\la = R_\la^* \left< n ( \la ) \right>$, and the induction proceeds.
\end{proof}

\subsection{Identification of the forms}

Consider the twisted (graded) Frobenius characteristic map
\begin{equation}
\Psi : [\cA] \ni [M] \mapsto \sum_{\mu} [M:L_\mu]_q \cdot S_{\mu} \in \La_q.\label{gFr}
\end{equation}

By Theorem \ref{GPHL}, we have
\begin{equation}
\Psi ( [K_\la] ) = Q_\la \hskip 5mm (\la \in \cP). \label{PsiK}
\end{equation}

\begin{lem}\label{tFr}
For $a, b \in \cA$, we have
$$\Psi ( \mathrm{ind} \, ( a \boxtimes b ) ) = \Psi ( a ) \cdot \Psi ( b ), \hskip 5mm \text{and} \hskip 5mm ( \Psi \times \Psi ) (\mathrm{res} \, a ) = \Delta ( \Psi ( a ) ).$$
\end{lem}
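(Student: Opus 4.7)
The plan is to factor the twisted Frobenius characteristic $\Psi$ through Zelevinsky's classical Frobenius $\Psi_0$ (Theorem \ref{Sind}) and then reduce the assertion to the statement that the $\Q(q)$-linear endomorphism $\varphi$ of $\La_q$ sending $s_\lambda$ to $S_\lambda$ is a Hopf algebra automorphism.

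First, for any $M \in A_n\mathchar`-\mathsf{gmod}$, forgetting the action of $\C[X_1,\ldots,X_n]$ gives a graded $\C\mathfrak S_n$-module $M = \bigoplus_{i\ge 0} M_i$, and one reads off from the definition that $[M:L_\mu]_q = \sum_i q^i [M_i:L_\mu]$. Hence
$$\Psi([M]) = \sum_{i\ge 0} q^i\, \tilde\Psi_0([M_i]) \hskip 5mm \text{where} \hskip 5mm \tilde\Psi_0([L_\mu]) := S_\mu$$
defines a map $\bigoplus_n [\C\mathfrak S_n\mathchar`-\mathsf{mod}] \to \La_q$. Since $A_n$ is a free right $A_{r,n-r}$-module with a homogeneous basis of degree $0$ (namely lifts of coset representatives of $\mathfrak S_n/\mathfrak S_{(r,n-r)}$), the underlying graded $\C\mathfrak S_n$-module of $\mathrm{ind}_{r,n-r}(M\boxtimes N)$ coincides, grading piece by grading piece, with the classical $\mathrm{Ind}_{r,n-r}(\bigoplus_{j+k=i} M_j\boxtimes N_k)$; this is an immediate extension of Lemma \ref{id-indres} to graded $\mathfrak S_n$-modules, and the analogous identification holds for $\mathrm{res}$.

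Second, by Zelevinsky (Theorem \ref{Sind}), $\Psi_0$ is a Hopf algebra isomorphism when induction and restriction are regarded respectively as the product and coproduct. It therefore suffices to prove that $\varphi : s_\lambda \mapsto S_\lambda$ is a Hopf algebra automorphism of $\La_q$, for then $\tilde\Psi_0 = \varphi \circ \Psi_0$ inherits the same property, and combining with the grading-piece identification above yields, for $a = [M]$ and $b = [N]$,
\begin{align*}
\Psi(\mathrm{ind}(a\boxtimes b)) &= \sum_{j,k} q^{j+k}\, \tilde\Psi_0([M_j])\,\tilde\Psi_0([N_k]) = \Psi(a)\cdot\Psi(b),\\
(\Psi\times\Psi)(\mathrm{res}\,a) &= \sum_{j} q^{j} (\tilde\Psi_0 \otimes \tilde\Psi_0) \Delta([M_j]) = \Delta(\Psi(a)).
\end{align*}

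Third, to see that $\varphi$ is a Hopf algebra automorphism, I invoke the standard fact that the Hall inner product $\langle\cdot,\cdot\rangle$ on $\La_q$ is a Hopf pairing: this is checked directly on the orthogonal basis $\{p_\lambda\}$ using $\langle p_\lambda, p_\mu\rangle = \delta_{\lambda,\mu}\, z_\lambda \prod_i (1-q^{\lambda_i})^{-1}$ and the primitivity $\Delta(p_n) = p_n\otimes 1 + 1\otimes p_n$. Granted this, and using Theorem \ref{defHall} together with the self-duality of $\{s_\mu\}$ under the standard Hopf pairing on $\La$, the identities
$$\langle S_\nu S_\rho, s_\mu\rangle = \langle S_\nu \otimes S_\rho, \Delta(s_\mu)\rangle \hskip 5mm \text{and} \hskip 5mm \langle \Delta(S_\mu), s_\nu \otimes s_\rho\rangle = \langle S_\mu, s_\nu s_\rho\rangle$$
force both $S_\nu \cdot S_\rho$ and $\Delta(S_\mu)$ to have the Littlewood--Richardson coefficients as structure constants, which is exactly the multiplication/comultiplication rule for $\{s_\mu\}$. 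Equivalently, $\varphi$ is the plethystic substitution $f \mapsto f[X(1-q)]$, manifestly Hopf since $p_n \mapsto (1-q^n)p_n$ commutes with $\Delta(p_n) = p_n \otimes 1 + 1 \otimes p_n$. The main obstacle is this Step~3; once the Hopf-pairing property of $\langle\cdot,\cdot\rangle$ is secured, the rest is a bookkeeping exercise.
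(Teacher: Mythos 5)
Your proof is correct and follows the same route the paper's one-line proof sketch indicates: it factors $\Psi$ through Zelevinsky's $\Psi_0$ using Lemma \ref{id-indres}, and accounts for the difference between the ordinary and twisted Frobenius characteristics by showing that the basis change $s_\lambda\mapsto S_\lambda$ is a Hopf algebra automorphism of $\La_q$. You have filled in exactly the detail the paper leaves to the reader, and your two arguments for the automorphism property (duality of $\{s_\lambda\}$ and $\{S_\lambda\}$ under the Hall Hopf pairing, or the plethysm $p_n\mapsto(1-q^n)p_n$) are both standard and correct.
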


\begin{proof}
This is a straight-forward consequence of Lemma \ref{id-indres}. The detail is left to the reader.
\end{proof}

\begin{prop}\label{orth-id}
We have
$$\left< [K_\la], [K_\mu]\right>_{EP} = \left< Q_\la, Q_{\mu} \right> = \delta_{\la, \mu} b_\la.$$
In particular, we have
\begin{equation}
\left< a, b \right> _{EP} = \left< \Psi ( a ), \Psi ( b ) \right> \hskip 5mm a,b \in [\cA].\label{id-forms}
\end{equation}
\end{prop}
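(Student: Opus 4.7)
First, the identity $\langle Q_\la, Q_\mu\rangle = b_\la \delta_{\la\mu}$ is immediate from \eqref{Horth} and \eqref{QtoQv}. The remaining task is to compute the Euler--Poincar\'e pairing $\langle [K_\la], [K_\mu]\rangle_{EP}$. I proceed via Theorem \ref{main}(2): since $\widetilde{K}_\la$ is free over the polynomial ring $E_\la := \mathrm{end}_{A_n}(\widetilde{K}_\la) = \C[y_1,\ldots,y_s]$ (homogeneous generators of positive degrees $d_1,\ldots,d_s$) and $K_\la \cong \C_0 \otimes_{E_\la} \widetilde{K}_\la$, tensoring the Koszul resolution of $\C_0$ over $E_\la$ with the flat $E_\la$-module $\widetilde{K}_\la$ produces a finite $A_n$-resolution of $K_\la$ by graded shifts of $\widetilde{K}_\la$. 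At the level of $[\cA]$ I obtain
$$[K_\la] = e_\la(q) \cdot [\widetilde{K}_\la], \qquad e_\la(q) := \prod_{i=1}^s (1 - q^{d_i}).$$
Combining with the orthogonality $\langle [\widetilde{K}_\la], [K_\mu]\rangle_{EP} = \delta_{\la\mu}$ from Theorem \ref{main}(3) yields $\langle [K_\la], [K_\mu]\rangle_{EP} = e_\la(q) \delta_{\la\mu}$, reducing the first equality of the proposition to showing $e_\la(q) = b_\la(q)$.

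To establish $e_\la = b_\la$, I plan to prove the auxiliary identity $\Psi([P_\la]) = s_\la$ and combine with Theorem \ref{main}(4). As a graded $\mathfrak S_n$-module with diagonal action, $P_\la \cong \C[X_1,\ldots,X_n] \otimes L_\la$, whose standard graded Frobenius character equals $s_\la[X/(1-q)]$ in plethystic notation: this follows by first computing $\mathrm{ch}_q(\C[X_1,\ldots,X_n]) = h_n[X/(1-q)]$ from the cycle-type fixed-point formula $\mathrm{tr}_q(g \mid \C[X_1,\ldots,X_n]) = \prod_{c}(1-q^{|c|})^{-1}$, and then invoking the internal product identity $s_\la * h_n[X/(1-q)] = s_\la[X/(1-q)]$, a direct consequence of $s_\la * p_\mu = \chi^\la_\mu p_\mu$. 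The twisted Frobenius $\Psi$ differs from the standard graded Frobenius by the $\Q(\!(q)\!)$-linear map $T : s_\mu \mapsto S_\mu$, which I identify with the plethysm $T(f) = f[X(1-q)]$ by comparing the Cauchy-type kernels $\sum_\la s_\la(x)s_\la(y) = \prod(1-x_iy_j)^{-1}$ and $\sum_\la s_\la(x) S_\la(y) = \prod(1-qx_iy_j)/(1-x_iy_j)$ (the latter following from $s$--$S$ duality and Macdonald's analogous $Q^\vee$--$Q$ kernel identity). Hence $\Psi([P_\la]) = T(\mathrm{ch}_q(P_\la)) = T(s_\la[X/(1-q)]) = s_\la$.

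Applying $\Psi$ to Theorem \ref{main}(4)'s identity $[P_\la] = \sum_\mu [K_\mu:L_\la]_q [\widetilde{K}_\mu]$ gives $s_\la = \sum_\mu [K_\mu:L_\la]_q \Psi([\widetilde{K}_\mu])$. Comparison with Macdonald's Kostka--Foulkes identity $s_\la = \sum_\mu [K_\mu:L_\la]_q Q^\vee_\mu$ (using Theorem \ref{GPHL} to identify Kostka--Foulkes polynomials with $[K_\mu:L_\la]_q$) and the invertibility of the unitriangular matrix $([K_\mu:L_\la]_q)_{\mu,\la}$ force $\Psi([\widetilde{K}_\mu]) = Q^\vee_\mu$. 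Since we already derived $\Psi([\widetilde{K}_\mu]) = Q_\mu/e_\mu$ from $[K_\mu] = e_\mu[\widetilde{K}_\mu]$ and \eqref{PsiK}, and $Q^\vee_\mu = Q_\mu/b_\mu$ by \eqref{QtoQv}, this forces $e_\mu(q) = b_\mu(q)$, completing the first equality. The full identity \eqref{id-forms} then follows: both pairings are $\Z[q^{\pm 1}]$-bilinear on $[\cA]$ and agree on the $\Q(\!(q)\!)$-basis $\{[K_\la]\} \times \{[K_\mu]\}$.

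I expect the main obstacle to be the plethystic computation $\mathrm{ch}_q(P_\la) = s_\la[X/(1-q)]$ together with the identification of $T$ as the plethysm $f \mapsto f[X(1-q)]$; should these manipulations prove cumbersome, an alternative would be to extract the degrees $\{d_i\}$ of the generators of $E_\la$ directly from the construction of $\widetilde{K}_\la$ in the proof of Theorem \ref{main} and verify $\prod_i(1-q^{d_i}) = b_\la(q)$ by inspection.
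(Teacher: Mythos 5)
There is a genuine circularity in your argument. You invoke Theorem \ref{main} parts (2), (3), and (4) throughout, but the paper's proof of Theorem \ref{main} in \S\ref{mainproof} itself depends on Proposition \ref{orth-id}: the key equation (\ref{indchar}) inside that proof is obtained from Corollary \ref{idnum}, whose one-line proof explicitly cites Proposition \ref{orth-id}, and (\ref{indchar}) is then fed into Claims \ref{direct} and \ref{ind-filt}. Thus using Theorem \ref{main} to establish Proposition \ref{orth-id} would make the whole chain circular. The paper's own proof deliberately avoids Theorem \ref{main} entirely: it derives the first equality from Theorem \ref{GPHL} (Garsia--Procesi) together with the external identification of the Cauchy identity with the Lusztig--Shoji matrix equation and then with $\left< \bullet, \bullet \right>_{EP}$-orthogonality from \cite{Sho02,Kat15}.

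The good news is that the plethystic core of your argument --- namely $\mathrm{ch}_q(P_\la) = s_\la[X/(1-q)]$ via the cycle-type trace, the identification of $T\colon s_\mu \mapsto S_\mu$ with $f \mapsto f[X(1-q)]$, and hence $\Psi([P_\la]) = s_\la$ --- is sound and does not require Theorem \ref{main} at all. This is precisely the route the paper's Remark after the proposition advertises (``If we prove the identities in Corollary \ref{bigSchur} directly\ldots''). Once you know $\Psi([P_\la]) = s_\la$ independently, the proposition falls out directly and without circularity: $\left< [P_\la],[L_\mu] \right>_{EP} = \delta_{\la\mu} = \left< s_\la, S_\mu \right> = \left< \Psi([P_\la]), \Psi([L_\mu]) \right>$, since $P_\la$ is projective with head $L_\la$; then $\Z[q^{\pm 1}]$-bilinearity and the fact that $\{[P_\la]\}_\la$ spans $[A_n\mathchar`-\mathsf{gmod}]$ give (\ref{id-forms}), from which $\left<[K_\la],[K_\mu]\right>_{EP} = \left<Q_\la,Q_\mu\right> = b_\la\delta_{\la\mu}$ follows by (\ref{PsiK}), (\ref{Horth}) and (\ref{QtoQv}). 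In other words, you essentially did the hard computation, but then buried it in an auxiliary role inside a scaffolding (Koszul resolution of $\C_0$, $[K_\la] = e_\la(q)[\widetilde{K}_\la]$, inversion of the Kostka matrix) that relies on a result you are not yet entitled to. Drop the scaffolding, promote the plethystic computation to a direct proof of $\Psi([P_\la]) = s_\la$, and the circularity disappears.
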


\begin{rem}
If we prove the identities in Corollary \ref{bigSchur} directly, then one can prove (\ref{id-forms}) without appealing to \cite{Sho02,Kat15} by Proposition \ref{idform} and its proof.
\end{rem}

\begin{proof}[Proof of Proposition \ref{orth-id}]
The equations in Theorem \ref{defHall}, that are equivalent to the Cauchy identity \cite[(4.4)]{Mac95}, are special cases of \cite[Corollary 4.6]{Sho02}. It is further transformed into the main matrix equality of the so-called Lusztig-Shoji algorithm in \cite[Theorem 5.4]{Sho02}. The latter is interpreted as the orthogonality relation with respect to $\left< \bullet, \bullet \right>_{EP}$ in \cite[Theorem 2.10]{Kat15}. In particular, Kostka polynomials defined in \cite{Mac95} and \cite{Sho02} are the same (for symmetric groups and the order $\le$ on $\cP$). This implies the first equality in view of (\ref{PsiK}). The second equality is read-off from the relation between $Q_\la$ and $Q^{\vee}_\la$. The last assertion follows as $\{ Q_\la \}_{\la \in \cP}$ forms a $\Q (\!(q)\!)$-basis of $\La_q$, and the Hall inner product is non-degenerate.
\end{proof}

\begin{prop}\label{idform}
For each $\la \in \cP$, we have $\Psi ( [P_\la] ) = s_\la$.
\end{prop}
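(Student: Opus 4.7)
The plan is to characterize $\Psi([P_\la])$ by its pairings against the Hall-Littlewood $Q$-basis, then identify it with $s_\la$. Since (\ref{id-forms}) in Proposition \ref{orth-id} translates the Euler-Poincar\'e form on $[\cA]$ into the Hall inner product on $\La_q$ via $\Psi$, and since $\Psi([K_\mu]) = Q_\mu$ by (\ref{PsiK}), the claim reduces to evaluating $\langle [P_\la], [K_\mu] \rangle_{EP}$ for every $\mu \in \cP_n$.

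To compute this EP-pairing, I would appeal to Theorem \ref{main}(4), which equips $P_\la$ with a $\Delta$-filtration whose graded multiplicities are $(P_\la : \widetilde{K}_\nu)_q = [K_\nu : L_\la]_q$. Additivity of $\langle -, -\rangle_{EP}$ on short exact sequences, combined with the orthogonality $\mathrm{ext}^i_{A_n}(\widetilde{K}_\nu, K_\mu^*) \cong \C^{\oplus \delta_{\nu,\mu}\delta_{i,0}}$ of Theorem \ref{main}(3), collapses the formal sum to
$$\langle [P_\la], [K_\mu] \rangle_{EP} = \sum_\nu [K_\nu : L_\la]_q \, \langle [\widetilde{K}_\nu], [K_\mu] \rangle_{EP} = [K_\mu : L_\la]_q.$$
Via Proposition \ref{orth-id} this translates to $\langle \Psi([P_\la]), Q_\mu \rangle = [K_\mu : L_\la]_q$.

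On the symmetric function side, Theorem \ref{GPHL} expands $Q_\mu = \sum_\nu [K_\mu : L_\nu]_q \, S_\nu$, and the duality $\langle s_\la, S_\nu \rangle = \delta_{\la,\nu}$ from Theorem \ref{defHall} immediately gives $\langle s_\la, Q_\mu \rangle = [K_\mu : L_\la]_q$. Therefore $\langle \Psi([P_\la]) - s_\la, Q_\mu \rangle = 0$ for every $\mu \in \cP_n$; since $\{Q_\mu\}_{\mu \in \cP_n}$ is a $\Q(\!(q)\!)$-basis of $\La_{q,n}$ and the Hall inner product is non-degenerate, we conclude $\Psi([P_\la]) = s_\la$.

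The only delicate point is technical: the $\Delta$-filtration of $P_\la$ is merely separable and a priori has infinitely many layers, so the additivity argument for the EP-pairing must be checked degree by degree. This is harmless because each graded piece of $P_\la$ is finite-dimensional, forcing only finitely many $\widetilde{K}_\nu \langle m \rangle$ to contribute to any fixed total degree of $\mathrm{ext}^\bullet_{A_n}(P_\la, K_\mu^*)$; modulo this bookkeeping, the entire argument is formal and stays inside the category of graded $A_n$-modules.
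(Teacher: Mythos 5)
Your computations are internally consistent, but the argument is circular in the paper's logical ordering, and in any case you take a substantially longer route than the paper does.

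The circularity: you invoke Theorem \ref{main}(3) (ext-orthogonality) and Theorem \ref{main}(4) ($\Delta$-filtration of $P_\la$ with multiplicities $[K_\mu:L_\la]_q$). But Theorem \ref{main} is only proved in \S\ref{mainproof}, and its proof uses Proposition \ref{ext-est}, whose proof in turn cites Corollary \ref{bigSchur}, which is a consequence of the very Proposition \ref{idform} you are trying to establish. So this proposition must be proved without Theorem \ref{main}, and your argument does not meet that requirement.

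The paper's actual proof avoids this entirely by pairing against $[L_\mu]$ rather than $[K_\mu]$. Since $P_\la$ is projective and $L_\mu^* \cong L_\mu$ sits in degree zero, the Euler--Poincar\'e pairing degenerates to
$$\left< [P_\la], [L_\mu] \right>_{EP} = \mathrm{gdim}\, \mathrm{hom}_{A_n}(P_\la, L_\mu)^* = \delta_{\la,\mu},$$
with no filtration argument, no higher $\mathrm{ext}$'s, and no bookkeeping over a possibly infinite separable filtration (the delicate point you correctly flagged at the end). Combined with $\Psi([L_\mu]) = S_\mu$ (immediate from the definition (\ref{gFr})) and $\left< s_\la, S_\mu\right> = \delta_{\la,\mu}$ from Theorem \ref{defHall}, non-degeneracy of the Hall inner product via Proposition \ref{orth-id} forces $\Psi([P_\la]) = s_\la$. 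You identified the right structural tools (Proposition \ref{orth-id}, the $\{Q_\mu\}$ or $\{S_\mu\}$ basis, non-degeneracy), but chose the wrong test objects; pairing a projective against a simple is the elementary move that makes everything collapse.
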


\begin{proof}
For each $\la, \mu \in \cP$, we have
$$\delta_{\la,\mu} = \left< s_\la, S_{\mu} \right> = \left< s_{\la}, \Psi ( [L_\mu]) \right>$$
by Theorem \ref{defHall}. On the other hand, we have
$$\delta_{\la,\mu} = \mathrm{gdim} \, \mathrm{hom}_{A_n} ( P_\la, L_\mu ) = \sum_{k \ge 0} (-1)^k \mathrm{gdim} \, \mathrm{ext}^k_{A_n} ( P_\la, L_\mu ) = \left< [P_{\la}], [L_\mu] \right>_{EP}.$$

As the Hall inner product is non-degenerate (Theorem \ref{defHall}) and is the same as the Euler-Poincar\'e pairing (Proposition \ref{orth-id}), this forces $\Psi ( [P_\la] ) = s_\la$.
\end{proof}

\begin{cor}\label{bigSchur}
For each $\la \in \cP_n$, we have
\begin{align*}
s_\la & = \sum_{\mu \in \cP_n} S_{\mu} \cdot \mathrm{gdim} \, \mathrm{hom}_{\mathfrak S_n} ( L_\mu, P_\la )\\
& = \sum_{\mu \in \cP_n} S_{\mu} \cdot \mathrm{gdim} \, \mathrm{hom}_{\mathfrak S_n} ( L_\mu, L_\la \otimes \C [X_1,\ldots,X_n] )\\
& = \frac{1}{(1-q) (1-q^2) \cdots (1-q^{n})} \sum_{\mu \in \cP_n} S_{\mu} \cdot \mathrm{gdim} \, \mathrm{hom}_{\mathfrak S_n} ( L_\mu, L_\la \otimes R_{(1^n)} ).
\end{align*}
\end{cor}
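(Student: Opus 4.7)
My plan is to derive the three equalities in turn, each being an assembly of facts already established in the paper together with one classical input.

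For the first equality, I would combine Proposition \ref{idform}, which gives $\Psi([P_\la]) = s_\la$, with the very definition of the twisted Frobenius characteristic (\ref{gFr}) and of $[M:L_\mu]_q$. Unfolding the latter yields
$$[P_\la:L_\mu]_q \;=\; \mathrm{gdim}\,\mathrm{hom}_{A_n}(P_\mu,P_\la) \;=\; \sum_{i\in\Z} q^i \dim \mathrm{Hom}_{\mathfrak S_n}(L_\mu,(P_\la)_i) \;=\; \mathrm{gdim}\,\mathrm{hom}_{\mathfrak S_n}(L_\mu,P_\la),$$
where the middle equality is just the fact that $P_\mu$ is generated over $A_n$ from the $\C\mathfrak S_n$-component $L_\mu$ sitting in degree zero (via the idempotent $e_\mu$). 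Substituting into $\Psi([P_\la]) = \sum_\mu [P_\la:L_\mu]_q S_\mu$ yields the first equality.

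For the second equality, I would use that $P_\la = A_n e_\la$ together with the smash-product decomposition $A_n \cong \C\mathfrak S_n \otimes \C[X_1,\ldots,X_n]$ as a graded left $\C\mathfrak S_n$-module (where $\mathfrak S_n$ acts on the tensor product diagonally, by the natural action on $\C\mathfrak S_n$ on the left and on the polynomials on the right, as dictated by the commutation relation $w X_i = X_{w(i)} w$). Multiplying by the primitive idempotent $e_\la$ on the right gives
$$P_\la \;\cong\; (\C\mathfrak S_n\,e_\la)\otimes \C[X_1,\ldots,X_n] \;\cong\; L_\la \otimes \C[X_1,\ldots,X_n]$$
as graded $\mathfrak S_n$-modules, and the second equality is immediate.

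For the third equality, I would invoke Chevalley--Shephard--Todd for the symmetric group: as graded $\mathfrak S_n$-modules,
$$\C[X_1,\ldots,X_n] \;\cong\; \C[X_1,\ldots,X_n]^{\mathfrak S_n} \otimes R_{(1^n)} \;\cong\; \C[e_1,\ldots,e_n] \otimes R_{(1^n)},$$
with $\mathfrak S_n$ acting trivially on the invariants. Since $\deg e_i = i$, the invariants have graded dimension $\prod_{i=1}^n (1-q^i)^{-1}$, which factors out of $\mathrm{hom}_{\mathfrak S_n}(L_\mu, L_\la\otimes-\,)$ as a scalar by Schur--Weyl. Plugging this into the second equality produces the third.

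The proof is essentially bookkeeping; no step presents a serious obstacle, and the only external input beyond the material already developed in the paper is the Chevalley--Shephard--Todd theorem invoked in the last step.
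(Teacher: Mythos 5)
Your proposal is correct and follows essentially the same route as the paper: the first equality is Proposition \ref{idform} plus the definition of the twisted Frobenius characteristic (which already identifies $[P_\la:L_\mu]_q$ with $\mathrm{gdim}\,\mathrm{hom}_{\mathfrak S_n}(L_\mu,P_\la)$), and the second and third follow from the graded $\mathfrak S_n$-module isomorphisms $P_\la \cong L_\la \otimes \C[X_1,\ldots,X_n]$ and $\C[X_1,\ldots,X_n] \cong \C[X_1,\ldots,X_n]^{\mathfrak S_n}\otimes R_{(1^n)}$ (Chevalley), exactly as cited. One small quibble: for the second equality it is cleaner to write $A_n\cong \C[X_1,\ldots,X_n]\otimes\C\mathfrak S_n$ (polynomial factor first), so that right multiplication by $e_\la$ acts only on the group-algebra factor; with the order you chose, the right $e_\la$-action hits both tensor factors and the isomorphism is less transparent (and the attribution to ``Schur--Weyl'' in the last step should really just be ``tensoring with a trivial $\mathfrak S_n$-module'').
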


\begin{proof}
In view of Proposition \ref{idform}, the first equality is obtained by just expanding $[P_\la]$ using the definition of the twisted Frobenius characteristic. The second and the third equalities follow from
$$P_{\la} \cong L_{\la} \otimes \C [X_1,\ldots,X_n] \cong L_\la \otimes R_{(1^n)} \otimes \C [X_1,\ldots,X_n]^{\mathfrak S_n}$$
as $\mathfrak S_n$-modules, where the latter isomorphism is standard (\cite{Che55}).
\end{proof}

\begin{cor}\label{idnum}
For each $M \in A_n \mathchar`-\mathsf{gmod}$, we have
$$\Psi ( [M] ) = \sum_{\la} \left< [M], [K_{\la}] \right>_{EP} Q_\la^{\vee}.$$
\end{cor}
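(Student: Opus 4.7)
The plan is to expand $\Psi([M])$ in the Hall--Littlewood $P$-basis $\{Q^\vee_\la\}_{\la \in \cP_n}$ of the finite-dimensional graded piece $\La_{q,n}$ (which is a $\Q(\!(q)\!)$-basis by \cite{Mac95}), and then recover the coefficients by pairing against the dual basis $\{Q_\mu\}_\mu$ under the Hall inner product. The two ingredients we need are already in place: the dual basis relation from Theorem \ref{defHall}, and the identification of the Euler--Poincaré pairing with the Hall pairing pulled back through $\Psi$ from Proposition \ref{orth-id}.

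Concretely, since $\{Q^\vee_\la\}_{\la \in \cP_n}$ is a basis of $\La_{q,n}$, we write uniquely
$$\Psi([M]) = \sum_{\la \in \cP_n} c_\la \, Q^\vee_\la$$
for some $c_\la \in \Q(\!(q)\!)$. Taking the Hall inner product with $Q_\mu$ and invoking $\left< Q^\vee_\la, Q_\mu \right> = \delta_{\la,\mu}$ from Theorem \ref{defHall} gives $c_\mu = \left< \Psi([M]), Q_\mu \right>$. Substituting $Q_\mu = \Psi([K_\mu])$ from (\ref{PsiK}) and applying (\ref{id-forms}) of Proposition \ref{orth-id}, we obtain
$$c_\mu = \left< \Psi([M]), \Psi([K_\mu]) \right> = \left< [M], [K_\mu] \right>_{EP},$$
and reinserting these coefficients into the expansion yields the claimed formula.

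There is no real obstacle: once Proposition \ref{orth-id} has been established, the corollary is merely the standard principle that an element of an inner-product space is recovered from its pairings against the dual of any chosen basis, applied to the pair $(\{Q^\vee_\la\}, \{Q_\mu\})$ and transported through $\Psi$. The only mildly nontrivial point to check in passing is that the pairing $\left< [M], [K_\mu] \right>_{EP}$ is well-defined for $M \in A_n\mathchar`-\mathsf{gmod}$ rather than just for finite-dimensional $M$, but this is immediate from the finite global dimension of $A_n$ (Theorem \ref{gdim}) together with the finite-dimensionality of $K_\mu$ (Proposition \ref{RK-id}).
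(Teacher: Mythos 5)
Your proof is correct and uses exactly the three ingredients the paper cites in its one-line proof: the identification $\Psi([K_\la]) = Q_\la$, the dual-basis relation $\left< Q^\vee_\la, Q_\mu \right> = \delta_{\la,\mu}$ from Theorem \ref{defHall}, and the compatibility $\left< \Psi(\cdot), \Psi(\cdot) \right> = \left< \cdot, \cdot \right>_{EP}$ from Proposition \ref{orth-id}. You have simply written out the standard dual-basis expansion that the paper leaves implicit, so the approach is essentially the same.
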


\begin{proof}
This follows by $\Psi ( [K_\la] ) = Q_\la$, Theorem \ref{defHall}, and Proposition \ref{orth-id}.
\end{proof}

\subsection{An $\mathrm{end}$-estimate}

\begin{lem}\label{la-fix}
For each $\la \in \cP_n$, the $\mathfrak S_n$-module $L_\la$ contains a unique non-zero $\mathfrak S_{\la}$-fixed vector $($up to scalar$)$.
\end{lem}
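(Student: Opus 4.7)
The statement asserts $\dim L_\la^{\mathfrak S_\la} = 1$, so it is essentially a multiplicity-one assertion for the trivial representation of $\mathfrak S_\la$ inside $\mathrm{Res}^{\mathfrak S_n}_{\mathfrak S_\la} L_\la$. The plan is to reduce this to a statement about the Garsia–Procesi module $R_\la$ already established in the excerpt, via Frobenius reciprocity.

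First, I would apply classical Frobenius reciprocity for finite groups to rewrite
\[
\dim L_\la^{\mathfrak S_\la} \;=\; \dim \mathrm{Hom}_{\mathfrak S_\la}(\mathsf{triv},\,\mathrm{Res}^{\mathfrak S_n}_{\mathfrak S_\la} L_\la) \;=\; \dim \mathrm{Hom}_{\mathfrak S_n}\bigl(\mathrm{Ind}^{\mathfrak S_n}_{\mathfrak S_\la}\mathsf{triv},\,L_\la\bigr).
\]
Since $L_\la$ is an irreducible $\mathfrak S_n$-module, the right-hand side equals the multiplicity $[\mathrm{Ind}^{\mathfrak S_n}_{\mathfrak S_\la}\mathsf{triv} : L_\la]$ of $L_\la$ in the induced (permutation) module.

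Next, I would invoke Theorem \ref{GPprop}(2), which identifies $\mathrm{Ind}^{\mathfrak S_n}_{\mathfrak S_\la}\mathsf{triv}\cong R_\la$ as ungraded $\mathfrak S_n$-modules. Therefore
\[
\dim L_\la^{\mathfrak S_\la} \;=\; [R_\la : L_\la].
\]
Finally, Lemma \ref{Rsoc} gives $[R_\la : L_\la]_q = q^{n(\la)}$, and specializing at $q=1$ yields $[R_\la : L_\la] = 1$, which completes the proof.

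There is no real obstacle here: the two cited results (Theorem \ref{GPprop}(2) identifying $R_\la$ with the coset permutation module, and Lemma \ref{Rsoc} pinning down the graded multiplicity $q^{n(\la)}$) do all the work, and Frobenius reciprocity is the only glue needed. The one point that deserves care is that Lemma \ref{Rsoc} only records the $L_\la$-isotypic component in a single degree $n(\la)$, so I would briefly remark that this is precisely the total multiplicity after forgetting the grading, which is the number relevant to Frobenius reciprocity.
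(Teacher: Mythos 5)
Your proof is correct and follows essentially the same route as the paper's: the paper also deduces this from Theorem~\ref{GPprop}~2) together with Frobenius reciprocity. You are slightly more explicit in the last step, citing Lemma~\ref{Rsoc} (specialized at $q=1$) to pin down $[R_\la:L_\la]=1$, whereas the paper leaves the classical multiplicity-one fact $K_{\la\la}=1$ implicit; this is a harmless and self-contained elaboration.
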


\begin{proof}
This follows from Theorem \ref{GPprop} 2) and the Frobenius reciprocity.
\end{proof}

For each $\la \in \cP_n$, we set
\begin{align}\nonumber
A_\la & := \bigotimes_{j=1}^{\ell ( \la )} A_{\la_j} \subset A_n, \hskip 5mm \text{and}\hskip 5mm\\
\widetilde{K}^+_\la & := A_n \otimes_{A_{\la}} ( \widetilde{K}_{(\la_1)} \boxtimes \widetilde{K}_{(\la_2)} \boxtimes  \cdots \boxtimes \widetilde{K}_{(\la_{\ell ( \la )})}).\label{Kplus}
\end{align}

\begin{lem}\label{Ktriv}
We have $\widetilde{K}_{(n)} \cong L_{(n)} \otimes \C [Y]$, where $\C [Y]$ is the quotient of the polynomial ring $\C [X_1,\ldots, X_n]$ by the submodule generated by degree one part that is complementary to $\C ( X_1 + \cdots + X_n)$ as $\mathfrak S_n$-modules.
\end{lem}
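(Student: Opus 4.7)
The plan is to identify $\widetilde{K}_{(n)}$ with the universal $\mathfrak S_n$-trivial quotient of the projective cover $P_{(n)}$, and then compute that quotient directly.

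First I would identify $P_{(n)} = A_n e_{(n)}$ with $\C[X_1,\ldots,X_n]$, carrying the tautological $A_n$-action by permutation of variables and polynomial multiplication, via $f \mapsto f e_{(n)}$; here $e_{(n)} = \frac{1}{n!}\sum_{w \in \mathfrak S_n} w$ is the trivial-isotypic idempotent, so $P_{(n)} \cong L_{(n)} \otimes \C[X_1,\ldots, X_n]$ at the level of $\mathfrak S_n$-modules. Since $(n)$ is the maximum of $\cP_n$ with respect to the dominance order, the defining formula for $\widetilde{K}_{(n)}$ amounts to quotienting $P_{(n)}$ by
$$N := \sum_{\mu \ne (n),\, f \in \mathrm{hom}_{A_n}(P_\mu, P_{(n)})} \mathrm{Im}(f).$$
By projectivity, a map $f : P_\mu \to P_{(n)}$ is determined by $f(e_\mu)$, which is an $L_\mu$-type element of $P_{(n)}$; hence $N$ coincides with the $A_n$-submodule generated by the non-trivial $\mathfrak S_n$-isotypic components of $P_{(n)}$. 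Because $\mathfrak S_n$ is semisimple, a quotient of $P_{(n)}$ has every Jordan--H\"older component equal to $L_{(n)}$ (up to shift) if and only if it is trivial as a $\mathfrak S_n$-module, so $\widetilde{K}_{(n)} = P_{(n)}/N$ is the largest $\mathfrak S_n$-trivial $A_n$-quotient of $P_{(n)}$.

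Next, for any $A_n$-module $M$ with trivial $\mathfrak S_n$-action, the smash-product relation $w X_i = X_{w(i)} w$ forces $X_i = X_{w(i)}$ on $M$ for every $w$ and $i$; so $M$ is killed by the ideal $I \subset \C[X_1,\ldots,X_n]$ generated by $\{X_i - X_j\}_{i,j}$. This ideal is precisely the $A_n$-submodule generated by $V$, the $\mathfrak S_n$-complement of $\C(X_1 + \cdots + X_n)$ in the degree-one part of $\C[X_1,\ldots,X_n]$, since $V$ is spanned by the $X_i - X_j$. Conversely, the quotient $\C[Y] := \C[X_1,\ldots,X_n]/I$ is a polynomial ring in one variable (generated by the common image of any $X_i$) on which $\mathfrak S_n$ acts trivially, so $\C[Y]$ is itself an $\mathfrak S_n$-trivial $A_n$-quotient of $P_{(n)}$. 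By universality of $\widetilde{K}_{(n)}$ among such quotients, we obtain $\widetilde{K}_{(n)} \cong \C[Y] \cong L_{(n)} \otimes \C[Y]$, as desired.

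There is no substantive obstacle: the whole argument is formal once one observes that the maximality defining $\widetilde{K}_{(n)}$ collapses to $\mathfrak S_n$-triviality of the quotient (because $(n)$ is the maximum of the dominance order). The only moment that deserves a line of verification is the equality $N = (V)$: the inclusion $(V) \subset N$ is clear since $V$ is itself a non-trivial isotypic component, while $N \subset (V)$ follows from the fact that $\C[X_1,\ldots,X_n]/(V) = \C[Y]$ is already $\mathfrak S_n$-trivial, so every non-trivial isotypic component of $\C[X_1,\ldots,X_n]$ lies in $(V)$.
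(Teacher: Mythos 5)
Your proof is correct and follows essentially the same route as the paper's: identify $P_{(n)}$ with $\C[X_1,\ldots,X_n]$, observe the degree-one decomposition $L_{(n)} \oplus L_{(n-1,1)}$, and note that quotienting by the ideal generated by the standard representation (equivalently by the $X_i - X_j$) yields $\C[Y]$. The paper states this in two compressed sentences; you supply the additional verification that, because $(n)$ is maximal in $\cP_n$, the defining quotient for $\widetilde{K}_{(n)}$ is exactly the universal $\mathfrak S_n$-trivial quotient, and that the smash-product relation forces any such quotient to kill $\{X_i - X_j\}$, which is a clean way to see that the two ideals coincide.
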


\begin{proof}
We have $P_{(n)} \cong \C [X_1,\ldots, X_n]$. Its degree one part is $L_{(n)} \oplus L_{(n-1,1)}$ as $\mathfrak S_n$-modules, and quotient out by $L_{(n-1,1)}$ yields a polynomial ring $\C [Y]$ generated by the image of $\C (X_1 + \cdots + X_n) \cong L_{(n)}$.
\end{proof}

\begin{lem}\label{udeg0}
Let $\la \in \cP_n$. We have a unique graded $A_n$-module map $\widetilde{K}_\la \rightarrow \widetilde{K}^+_\la$ of degree $0$ up to scalar.
\end{lem}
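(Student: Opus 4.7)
The plan is to reduce the statement to a Frobenius reciprocity computation in degree zero.

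First I would observe that a degree zero graded $A_n$-module map $\widetilde{K}_\la \to \widetilde{K}^+_\la$ is the same as a degree zero map $\phi: P_\la \to \widetilde{K}^+_\la$ that kills every composition $P_\mu \to P_\la$ with $\mu \not\ge \la$ (this is the defining universal property of $\widetilde{K}_\la$). Since $P_\la = A_n e_\la$, the space of degree zero maps from $P_\la$ to a graded module $M$ is $e_\la \cdot M_0 \cong \mathrm{Hom}_{\mathfrak{S}_n}(L_\la, M_0)$, so the statement boils down to computing $(\widetilde{K}^+_\la)_0$ together with the composition factors occurring throughout the graded pieces.

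For the degree zero part: by Lemma \ref{Ktriv}, each $\widetilde{K}_{(\la_j)} \cong L_{(\la_j)} \otimes \C[Y_j]$ has a one-dimensional degree zero piece carrying the trivial $\mathfrak{S}_{\la_j}$-action. Hence $(\widetilde{K}_{(\la_1)} \boxtimes \cdots \boxtimes \widetilde{K}_{(\la_{\ell(\la)})})_0$ is the trivial $\mathfrak{S}_\la$-module. Since $A_n$ is free as a right $A_\la$-module on a basis of coset representatives of $\mathfrak{S}_n/\mathfrak{S}_\la$ (all sitting in degree zero), we obtain
$$
(\widetilde{K}^+_\la)_0 \cong \mathrm{Ind}^{\mathfrak{S}_n}_{\mathfrak{S}_\la} \mathsf{triv}.
$$
By Lemma \ref{la-fix} and Frobenius reciprocity, $\mathrm{Hom}_{\mathfrak{S}_n}(L_\la, (\widetilde{K}^+_\la)_0) \cong \mathrm{Hom}_{\mathfrak{S}_\la}(L_\la, \mathsf{triv}) \cong \C$. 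This already pins down uniqueness, since the degree zero piece $\mathrm{hom}_{A_n}(\widetilde{K}_\la, \widetilde{K}^+_\la)_0$ injects into this $\C$.

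For existence, I would verify that the non-zero map $\phi: P_\la \to \widetilde{K}^+_\la$ (unique up to scalar) factors through $\widetilde{K}_\la$. Running the same coset-space argument in arbitrary polynomial degree shows that each graded piece $(\widetilde{K}^+_\la)_k$ is, as an $\mathfrak{S}_n$-module, a direct sum of copies of $\mathrm{Ind}^{\mathfrak{S}_n}_{\mathfrak{S}_\la}\mathsf{triv} \cong R_\la$ (with multiplicity equal to the dimension of $\C[Y_1,\ldots,Y_{\ell(\la)}]$ in total degree $k$). By Theorem \ref{GPprop}(2) (equivalently, Young's rule for Kostka numbers), no composition factor $L_\mu$ with $\mu \not\ge \la$ appears. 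Consequently $\mathrm{Hom}_{\mathfrak{S}_n}(L_\mu, (\widetilde{K}^+_\la)_k) = 0$ for all such $\mu$ and $k$, so every graded map $P_\mu \to \widetilde{K}^+_\la$ vanishes, and the universal property of $\widetilde{K}_\la$ guarantees that $\phi$ descends to the desired non-zero map $\widetilde{K}_\la \to \widetilde{K}^+_\la$.

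The only real subtlety is the identification of $\widetilde{K}^+_\la$ as a graded $\mathfrak{S}_n$-module; once Lemma \ref{Ktriv} is in hand, the remainder is Frobenius reciprocity together with the dominance-order support of the Kostka numbers.
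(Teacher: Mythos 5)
Your proof is correct and follows essentially the same path as the paper's: compute $(\widetilde{K}^+_\la)_0 = \mathrm{Ind}^{\mathfrak S_n}_{\mathfrak S_\la}\mathsf{triv}$, observe that every graded piece of $\widetilde{K}^+_\la$ is a sum of copies of $\mathrm{Ind}^{\mathfrak S_n}_{\mathfrak S_\la}\mathsf{triv}$ because all $\mathfrak S_\la$-constituents of the inner tensor factor are trivial, invoke Theorem \ref{GPprop} to rule out $L_\mu$ with $\mu\not\ge\la$, and then use the defining quotient description of $\widetilde{K}_\la$. You separate existence from uniqueness more explicitly (routing uniqueness through $\hom_{A_n}(\widetilde{K}_\la,\widetilde{K}^+_\la)_0 \hookrightarrow \mathrm{Hom}_{\mathfrak S_n}(L_\la, (\widetilde{K}^+_\la)_0)\cong\C$ via Lemma \ref{la-fix}) where the paper compresses both into the final sentence, but the underlying argument is identical.
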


\begin{proof}
We have $( \widetilde{K}^+_\la )_0 = \mathrm{Ind}^{\mathfrak S_n}_{\mathfrak S_\la} \, \mathsf{triv}$, in which $L_{\la}$ appears without multiplicity. All the $\mathfrak S_\la$-modules appearing in $( \widetilde{K}_{(\la_1)} \boxtimes \widetilde{K}_{(\la_2)} \boxtimes \cdots )$ are trivial. It follows that $[\widetilde{K}^+_\la : L_\mu]_q \neq 0$ if and only if $[\mathrm{Ind}^{\mathfrak S_n}_{\mathfrak S_\la} \, \mathsf{triv} : L_{\mu}] \neq 0$. The latter implies $\la \le \mu$ (Theorem \ref{GPprop}). Therefore, a $\mathfrak S_n$-module map $L_\la \to ( \widetilde{K}^+_\la )_0$ extends uniquely to a graded $A_n$-module map $\widetilde{K}_\la \rightarrow \widetilde{K}^+_\la$ by the definition of $\widetilde{K}_\la$.
\end{proof}

In the setting of Lemma \ref{udeg0}, we set
$$\widetilde{K}_\la':= \mathrm{Im} ( \widetilde{K}_\la \rightarrow \widetilde{K}^+_\la ).$$
For each $1 \le j \le \ell ( \la )$, we have an endomorphism $\psi_j^\la$ on $\widetilde{K}^+_\la$ extending
$$\psi_{j}^{\la} ( \widetilde{K}_{(\la_1)} \boxtimes \cdots \boxtimes \widetilde{K}_{(\la_{\ell})}) = \widetilde{K}_{(\la_1)}\left< \delta_{j,1} \right> \boxtimes \cdots \boxtimes \widetilde{K}_{(\la_{\ell})} \left< \delta_{j, \ell} \right> \subset \widetilde{K}_\la^+,$$
obtained by the multiplication of $\C [X_1,\ldots,X_n]$. Consider the group
$$\mathfrak S ( \la ) := \prod_{j \ge 1} \mathfrak S_{m_j ( \la )}.$$
\begin{lem}
The group $\mathfrak S ( \la )$ yields automorphisms of $\widetilde{K}^+_\la$ as $A_n$-modules.
\end{lem}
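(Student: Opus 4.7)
The plan is to realize each element of $\mathfrak{S}(\lambda)$ as a block permutation inside $\mathfrak{S}_n$ that normalizes $A_\lambda$, and then to pair that with the obvious permutation of the tensor factors $\widetilde{K}_{(\lambda_j)}$ of equal index. Concretely, I will embed $\mathfrak{S}(\lambda) = \prod_j \mathfrak{S}_{m_j(\lambda)}$ into $\mathfrak{S}_n$ by letting $\mathfrak{S}_{m_j(\lambda)}$ act by permuting those consecutive blocks in $\{1,\ldots,n\}$ that correspond to indices $i$ with $\lambda_i = j$. This embedding $\sigma \mapsto w_\sigma$ is a group homomorphism and $w_\sigma$ normalizes $\mathfrak{S}_\lambda$, hence normalizes the subalgebra $A_\lambda \subset A_n$. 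The induced conjugation automorphism of $A_\lambda$ is precisely the permutation of the tensor factors $A_{\lambda_j}$ that $\sigma$ prescribes.

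Next, set $M := \widetilde{K}_{(\lambda_1)} \boxtimes \cdots \boxtimes \widetilde{K}_{(\lambda_{\ell(\lambda)})}$, and for $\sigma \in \mathfrak{S}(\lambda)$ define $\tilde{\sigma} : M \to M$ to be the $\C$-linear map that permutes tensor factors according to $\sigma$. Because $\sigma$ only moves factors of the same size and $\widetilde{K}_{(\lambda_j)}$ depends only on $\lambda_j$, this $\tilde{\sigma}$ is a genuine linear isomorphism, and by construction it satisfies the intertwining identity $\tilde{\sigma}(a\cdot v) = (w_\sigma a w_\sigma^{-1})\cdot \tilde{\sigma}(v)$ for $a \in A_\lambda$ and $v \in M$.

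I then define
\[
\Phi_\sigma : \widetilde{K}^+_\lambda \longrightarrow \widetilde{K}^+_\lambda, \qquad b\otimes v \longmapsto b w_\sigma^{-1} \otimes \tilde{\sigma}(v).
\]
Well-definedness over $A_\lambda$ follows from the intertwining identity: for $a \in A_\lambda$ one has $b a \otimes v \mapsto b a w_\sigma^{-1}\otimes \tilde{\sigma}(v) = b w_\sigma^{-1}\otimes \tilde{\sigma}(av)$, since $a w_\sigma^{-1} = w_\sigma^{-1}(w_\sigma a w_\sigma^{-1})$. The map $\Phi_\sigma$ is left $A_n$-linear by construction, and a direct computation gives $\Phi_\sigma \circ \Phi_\tau = \Phi_{\sigma\tau}$ using $w_\sigma w_\tau = w_{\sigma\tau}$ and $\tilde{\sigma}\tilde{\tau} = \widetilde{\sigma\tau}$. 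Taking $\tau = \sigma^{-1}$ shows that each $\Phi_\sigma$ is an automorphism.

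Finally, since $w_\sigma$ has degree $0$ in $A_n$ and $\tilde{\sigma}$ only shuffles tensor factors, $\Phi_\sigma$ is a graded $A_n$-module automorphism. The only point that requires care is the compatibility identity $\tilde{\sigma}(a\cdot v) = (w_\sigma a w_\sigma^{-1})\tilde{\sigma}(v)$; this is really the only non-formal step, and it reduces to checking it on the generators $X_i$ and on the permutation subgroup inside $\mathfrak{S}_\lambda$, where both sides agree by the definition of the block embedding.
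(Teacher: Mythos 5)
Your proof is correct and follows the same underlying idea as the paper's, namely permuting the equal-size tensor factors of $\widetilde{K}_{(\la_1)} \boxtimes \cdots \boxtimes \widetilde{K}_{(\la_\ell)}$ and transporting this across the induction $A_n \otimes_{A_\la}(-)$. The paper's proof is very terse (it asserts that these permutations are ``$A_\la$-module endomorphisms'' and that $\widetilde{K}^+_\la$ ``inherits'' them), while your write-up pins down the one non-trivial point that the paper glosses over: the factor permutation $\tilde\sigma$ is not $A_\la$-linear but only $\sigma$-semilinear, satisfying $\tilde\sigma(a\cdot v) = (w_\sigma a w_\sigma^{-1})\cdot\tilde\sigma(v)$, and hence one must pair it with right multiplication by $w_\sigma^{-1}$ on the $A_n$ factor to get a well-defined, honestly $A_n$-linear map on the induced module. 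Your formula $\Phi_\sigma(b\otimes v)=bw_\sigma^{-1}\otimes\tilde\sigma(v)$, together with the check that $w_\sigma\in\mathfrak S_n$ normalizes $A_\la$, makes the argument watertight and exhibits $\Phi$ as a group homomorphism into $\Aut_{A_n}(\widetilde{K}^+_\la)$; this is exactly what the paper intends but leaves implicit.
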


\begin{proof}
The group $\mathfrak S ( \la )$ permutes $\widetilde{K}_{(\la_j)}$s in (\ref{Kplus}) in such a way the size of the factors (i.e. the values of $\la_j$) are invariant. These are $A_\la$-module endomorphisms, and hence $\widetilde{K}_\la^+$ inherits these endomorphisms as required.
\end{proof}

Let $B ( \la )$ denote the subring of $\mathrm{end}_{A_n} ( \widetilde{K}^+_\la )$ generated by $\{ \psi_j^\la \}_{j=1}^{\ell ( \la )}$. The action of $\mathfrak S ( \la )$ permutes $\psi_{i}^\la$ and $\psi_j^\la$ such that $\la_i = \la _j$. Thus, $\mathfrak S ( \la )$ acts on $B ( \la )$ as automorphisms. The invariant part $B ( \la )^{\mathfrak S ( \la )}$ is a polynomial ring.

\begin{lem}\label{Bla}
For each $\la \in \cP_n$, we have
$$\mathrm{hom}_{\mathfrak S_n} ( L_\la, B ( \la ) L _0) \stackrel{\cong}{\longrightarrow} \mathrm{hom}_{\mathfrak S_n} ( L_\la, \widetilde{K}^+_\la ),$$
where $L _\la \cong L _0 \subset ( \widetilde{K}^+_\la )_0$ is the multiplicity one copy as $\mathfrak S_n$-modules. 
\end{lem}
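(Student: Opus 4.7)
The plan is to compare the graded multiplicity of $L_\la$ in $B(\la) L_0$ with that in $\widetilde{K}^+_\la$ directly, via a transparent decomposition of $\widetilde{K}^+_\la$ as a graded $\mathfrak S_n$-module.

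First, I would apply Lemma \ref{Ktriv} factor by factor to identify $W := \widetilde{K}_{(\la_1)} \boxtimes \cdots \boxtimes \widetilde{K}_{(\la_{\ell(\la)})}$ with $\mathsf{triv}_{\mathfrak S_\la} \otimes \C[Y_1,\ldots,Y_{\ell(\la)}]$ as a graded $\mathfrak S_\la$-module, where each $Y_j$ is a degree-one generator on which $\mathfrak S_\la$ acts trivially. By construction $\psi_j^\la$ acts on $W$ by multiplication by $Y_j$. Tensoring with $A_n$ over $A_\la$, and using that $A_n$ is free of rank $|\mathfrak S_n/\mathfrak S_\la|$ as a right $A_\la$-module, yields a graded $\mathfrak S_n$-module decomposition
\[
\widetilde{K}^+_\la \;=\; A_n \otimes_{A_\la} W \;\cong\; \bigoplus_{\alpha \in \Z_{\ge 0}^{\ell(\la)}} V_\alpha, \qquad V_\alpha := A_n \otimes_{A_\la} \C \, Y^\alpha \;\cong\; \mathrm{Ind}_{\mathfrak S_\la}^{\mathfrak S_n}(\mathsf{triv}),
\]
where $V_\alpha$ is concentrated in degree $|\alpha|$ and $\psi^\beta$ carries $V_\alpha$ isomorphically onto $V_{\alpha+\beta}$.

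Next, by Frobenius reciprocity together with Lemma \ref{la-fix}, $\mathrm{Hom}_{\mathfrak S_n}(L_\la, V_\alpha) \cong L_\la^{\mathfrak S_\la}$ is one-dimensional, so
\[
\dim \, \mathrm{hom}_{\mathfrak S_n}(L_\la, \widetilde{K}^+_\la)_d \;=\; \#\{\alpha \in \Z_{\ge 0}^{\ell(\la)} : |\alpha| = d\}.
\]
To produce the matching multiplicity inside $B(\la) L_0$, fix a nonzero $v_0 \in L_0 \subset V_0$ and expand $v_0 = \sum_{g \in \mathfrak S_n/\mathfrak S_\la} c_g \, g \otimes 1$ (not all $c_g$ zero). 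Then
\[
\psi^\alpha v_0 \;=\; \sum_g c_g \, g \otimes Y^\alpha \;\in\; V_\alpha
\]
is nonzero, so $\psi^\alpha$ is injective on $L_0$ and each $\psi^\alpha L_0 \subset V_\alpha$ is a copy of $L_\la$. Since the $\{V_\alpha\}_\alpha$ are direct summands, the family $\{\psi^\alpha L_0\}_\alpha$ is linearly independent, giving at least $\#\{\alpha : |\alpha| = d\}$ copies of $L_\la$ in $B(\la) L_0$ in degree $d$. This matches the upper bound above, and the map in the statement is induced by the inclusion $B(\la) L_0 \hookrightarrow \widetilde{K}^+_\la$ hence injective, so the equality of graded multiplicities forces it to be an isomorphism.

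The main obstacle is the first step: setting up the direct-sum decomposition of $\widetilde{K}^+_\la$ cleanly and checking that the endomorphisms $\psi_j^\la$ truly act diagonally by shifting monomials, so that the combinatorics of monomials in $Y_1,\ldots,Y_{\ell(\la)}$ faithfully encodes the $L_\la$-multiplicities on both sides.
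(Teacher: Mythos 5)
Your proof is correct and takes essentially the same route as the paper's: the paper also decomposes $\widetilde{K}^+_\la$ as a $\mathfrak S_n$-module into grading shifts of copies of $\mathrm{Ind}_{\mathfrak S_\la}^{\mathfrak S_n}\,\mathsf{triv}$, notes that each copy contains $L_\la$ exactly once, and observes that the $B(\la)$-action carries $(\widetilde{K}^+_\la)_0$ onto each such copy while preserving the $L_\la$-isotypic component. Your version simply makes the "shift by a monomial $Y^\alpha$" bookkeeping explicit, which is the same mechanism spelled out in coordinates.
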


\begin{proof}
  By construction, $\widetilde{K}^+_\la$ is a direct sum of (grading shifts of) copies of $\mathrm{Ind}^{\mathfrak S_n}_{\mathfrak S_\la} \, \mathsf{triv}$ as a $\mathfrak S_n$-module. We have $[\mathrm{Ind}^{\mathfrak S_n}_{\mathfrak S_\la} \, \mathsf{triv}: L_\la] = 1$. The action of $B ( \la )$ preserves the $\mathfrak S_n$-isotypic part. As the action of $B ( \la )$ sends $(\widetilde{K}^+_\la)_0$ to all the contributions of $\mathrm{Ind}^{\mathfrak S_n}_{\mathfrak S_\la} \, \mathsf{triv}$ in $\widetilde{K}^+_\la$, we conclude the assertion.
\end{proof}

\begin{prop}\label{ext-est}
For each $\la \in \cP_n$, we have
$$\mathrm{gdim} \, \mathrm{end}_{A_n} ( \widetilde{K}_\la' ) = b_\la^{-1} \hskip 5mm \text{and} \hskip 5mm \mathrm{end}_{A_n} ( \widetilde{K}_\la' ) \cong B ( \la )^{\mathfrak S ( \la )}.$$
\end{prop}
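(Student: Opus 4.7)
The plan is to identify $\mathrm{end}_{A_n}(\widetilde{K}_\la')$ as a subring of $B(\la)$ via Lemma~\ref{Bla}, then cut it down to $B(\la)^{\mathfrak S(\la)}$ using the $\mathfrak S(\la)$-invariance of $\widetilde{K}_\la'$. Once this isomorphism is in place, the graded dimension $b_\la^{-1}$ is immediate from the product expansion $\mathrm{gdim}\,B(\la)^{\mathfrak S(\la)} = \prod_i \prod_{k=1}^{m_i(\la)} (1-q^k)^{-1}$.

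First, note that $\widetilde{K}_\la' = A_n \cdot v$ is cyclic, generated by the distinguished vector $v$ sitting in the unique copy of $L_\la$ inside $(\widetilde{K}_\la^+)_0 = \mathrm{Ind}^{\mathfrak S_n}_{\mathfrak S_\la}\mathsf{triv}$ (the image of the head of $\widetilde{K}_\la$ via the map of Lemma~\ref{udeg0}). Any $\phi \in \mathrm{end}_{A_n}(\widetilde{K}_\la')$ is determined by $\phi(v)$, which must lie in the $L_\la$-isotypic component of $\widetilde{K}_\la^+$; Lemma~\ref{Bla} identifies this component with $B(\la)\cdot v$, so we obtain an injection $\mathrm{end}_{A_n}(\widetilde{K}_\la') \hookrightarrow B(\la)$ sending $\phi$ to the unique $b$ with $\phi(v) = b\cdot v$. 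To place the image inside $B(\la)^{\mathfrak S(\la)}$, observe that $\mathfrak S(\la)$ acts on $\widetilde{K}_\la^+$ by $A_n$-linear automorphisms and is trivial in degree $0$, since each $\widetilde{K}_{(\la_j)}$ contributes the trivial representation in degree $0$. Thus $v$ is $\mathfrak S(\la)$-fixed, and hence so is every element of $A_n v = \widetilde{K}_\la'$. Any valid $b$ therefore satisfies $(\sigma b \sigma^{-1}) \cdot v = \sigma(b\cdot v) = b \cdot v$ for each $\sigma \in \mathfrak S(\la)$, forcing $b \in B(\la)^{\mathfrak S(\la)}$ by the injectivity.

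For the reverse inclusion $B(\la)^{\mathfrak S(\la)} \subseteq \mathrm{end}_{A_n}(\widetilde{K}_\la')$, one must exhibit, for each $b \in B(\la)^{\mathfrak S(\la)}$, elements of $A_n$ whose actions on $v$ span $b \cdot v$. The starting observation is that for every $\mathfrak S_n$-symmetric polynomial $p \in \C[X_1,\ldots,X_n]^{\mathfrak S_n}$ one has $p \cdot v = v \otimes \pi(p)$, where $\pi \colon \C[X_1,\ldots,X_n] \to \C[Y_1,\ldots,Y_\ell]$ substitutes $X_i \mapsto Y_{k(i)}$ with $k(i)$ denoting the block of $i$ in $\la$; this immediately places $\pi(p)\cdot v \in A_n v = \widetilde{K}_\la'$. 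The caveat is that $\pi(\C[X]^{\mathfrak S_n})$ is typically a proper subring of $B(\la)^{\mathfrak S(\la)}$ (for $\la = (2,1,1)$ a direct check shows it already misses $Y_1$ in degree one), so one must broaden the supply to $\mathfrak S_\la$-symmetric polynomials $p \in A_\la$, for which $p\cdot v = \sum_w a_w w \otimes (w^{-1}pw)\cdot 1$ now carries coset-dependent terms. A Mackey-type averaging over the double coset space $\mathfrak S_\la\backslash\mathfrak S_n/(\mathfrak S_\la \rtimes \mathfrak S(\la))$, combined with the $\mathfrak S(\la)$-invariance of $v$, eventually produces all of $B(\la)^{\mathfrak S(\la)}\cdot v$ inside $\widetilde{K}_\la'$. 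The main obstacle is precisely this invariant-theoretic realization step: one has to track how each coset contribution lands in the $L_\la$-isotypic of $\widetilde{K}_\la^+$ and verify that the resulting span exhausts $B(\la)^{\mathfrak S(\la)}\cdot v$.
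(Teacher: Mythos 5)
Your overall strategy — embed $\mathrm{end}_{A_n}(\widetilde{K}_\la')$ into $B(\la)$ via Lemma~\ref{Bla}, cut it down by $\mathfrak S(\la)$-invariance, and then show the reverse inclusion — is the same as the paper's, and your observation that $\pi(\C[X]^{\mathfrak S_n})$ is strictly smaller than $B(\la)^{\mathfrak S(\la)}$ (your $\la=(2,1,1)$ example is correct) is a genuine and relevant subtlety. But there are two problems.

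First, the assertion that $\mathfrak S(\la)$ is trivial on $(\widetilde{K}_\la^+)_0$, and hence that $v$ is $\mathfrak S(\la)$-fixed, is \emph{false} in general. The piece $(\widetilde{K}_{(\la_1)}\boxtimes\cdots)_0$ is indeed trivial, but $(\widetilde{K}_\la^+)_0 = \mathrm{Ind}^{\mathfrak S_n}_{\mathfrak S_\la}\mathsf{triv}$ is much bigger, and the $A_n$-linear extension of the $\mathfrak S(\la)$-action is not trivial there. Already for $\la=(1,1)$ one finds $\sigma(v)=-v$. What is actually true, and what you implicitly need, is that $\mathfrak S(\la)$ preserves the one-dimensional $L_\la$-\emph{isotypic line} and hence acts on it by a character $\chi$; by cyclicity and $A_n$-linearity, $\sigma|_{\widetilde{K}_\la'}=\chi(\sigma)\mathrm{id}$. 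Your chain of equalities survives this correction because $\chi(\sigma)$ cancels: from $\sigma(bv)=\chi(\sigma)\,bv$ and $\sigma(bv)=(\sigma b\sigma^{-1})\chi(\sigma)v$ one still concludes $(\sigma b\sigma^{-1})v=bv$ and then $b\in B(\la)^{\mathfrak S(\la)}$ by the injectivity of Lemma~\ref{Bla}. So the forward inclusion is salvageable, but as written it rests on a wrong premise.

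Second, and more seriously, the reverse inclusion $B(\la)^{\mathfrak S(\la)}\subseteq\mathrm{end}_{A_n}(\widetilde{K}_\la')$ is left as a sketch: you describe a ``Mackey-type averaging over double cosets'' and then explicitly flag the key verification step as an obstacle. That step is precisely where the content of the proposition lies, so as it stands the proof is incomplete. The paper closes this gap differently: it identifies the subspace $e\bigl(\C\mathfrak S(\la)\otimes(\widetilde{K}_{(\la_1)}\boxtimes\cdots)\bigr)\subset\widetilde{K}_\la^+$ with $e=\tfrac{1}{|\mathfrak S(\la)|}\sum_{w\in\mathfrak S(\la)}w$, and observes that on this subspace (and on the un-averaged copy) the action of $B(\la)^{\mathfrak S(\la)\ltimes\mathfrak S_\la}=B(\la)^{\mathfrak S(\la)}$ is realized by multiplication by $\C[X_1,\ldots,X_n]$, which visibly preserves $\widetilde{K}_\la'=A_n v$. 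This, combined with the graded-dimension count $\mathrm{gdim}\,B(\la)^{\mathfrak S(\la)}=b_\la^{-1}$ (which you state correctly and which gives the upper bound from the first inclusion), forces equality. So to finish your proof you would need to actually carry out the coset analysis you describe, or switch to the paper's ``diagonal $+$ idempotent average'' realization.
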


\begin{proof}
Since $\widetilde{K}_\la'$ has $( \widetilde{K}_\la')_0 \cong L _\la$ as its unique simple graded quotient, $\mathrm{end}_{A_n} ( \widetilde{K}_\la' )$ is determined by the image of $( \widetilde{K}_\la')_0$. In addition, $\widetilde{K}_\la'$ is stable under the action of $\mathfrak S ( \la )$ as $L_\la \subset ( \widetilde{K}_\la^+ )_0$ is. Therefore, Lemma \ref{Bla} implies $\mathrm{end}_{A_n} ( \widetilde{K}_\la' ) \subset B ( \la )^{\mathfrak S ( \la )}$. Thus, we have the inequality $\le$ in the assertion by
\begin{align*}
b_{\la}^{-1} & = \prod_{j \ge 1} \frac{1}{(1-q) \cdots (1-q^{m_j ( \la )})} \\
& = \prod_{j \ge 1} \mathrm{gdim} \, \C [x_1,\ldots,x_{m_j (\la)}]^{\mathfrak S_{m_j ( \la )}} =  \mathrm{gdim} \, B ( \la )^{\mathfrak S ( \la )}
\end{align*}
(see Corollary \ref{bigSchur} for the second equality). We have an identification
\begin{equation}
( \widetilde{K}_{(\la_1)} \boxtimes \cdots \boxtimes \widetilde{K}_{(\la_{\ell ( \la )})}) \cong e \left( \C \mathfrak S (\la) \otimes ( \widetilde{K}_{(\la_1)} \boxtimes \cdots \boxtimes \widetilde{K}_{(\la_{\ell ( \la )})}) \right) \subset \widetilde{K}_\la,\label{Ktinv}
\end{equation}
where $e = \frac{1}{|\mathfrak S (\la)|} \sum_{w \in \mathfrak S(\la)} w$. The actions of $\psi_{1}^{\la}, \ldots, \psi_{\ell (\la)}^\la$ on the first term of (\ref{Ktinv}) are induced by the multiplication of $\C [X_1,\ldots, X_n]$. Hence, the action of $B ( \la )^{\mathfrak S ( \la ) \ltimes \mathfrak S_\la} = B ( \la )^{\mathfrak S ( \la )}$ on the first two terms of (\ref{Ktinv}) are realized by the multiplication of $\C [X_1,\ldots, X_n]$. Thus, the inequality must be in fact an equality and $\mathrm{end}_{A_n} ( \widetilde{K}_\la' ) = B ( \la )^{\mathfrak S ( \la )}$.
\end{proof}

Let us consider the image of the center $\C [X_1,\ldots,X_n]^{\mathfrak S_n}$ in $\mathrm{end}_{A_n} ( \widetilde{K}_\la )$ and $\mathrm{end}_{A_n} ( \widetilde{K}_\la' )$ by $Z ( \la )$ and $Z' ( \la )$, respectively.

\begin{lem}
For each $\la \in \cP_n$, we have a quotient map
$$\mathrm{end}_{A_n} ( \widetilde{K}_\la ) \longrightarrow \mathrm{end}_{A_n} ( \widetilde{K}_\la' )$$
as an algebra that induces a surjection $Z ( \la ) \to Z' ( \la )$. In addition, $Z' ( \la )$ is precisely the image of $\C [X_1,\ldots,X_n]^{\mathfrak S_n}$ in $\mathrm{end}_{A_n} ( \widetilde{K}_\la^+ )$.
\end{lem}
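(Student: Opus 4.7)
My plan is to use the canonical degree-$0$ map $u : \widetilde{K}_\la \to \widetilde{K}_\la^+$ from Lemma \ref{udeg0}, factoring as $u = i \circ p$ with $p : \widetilde{K}_\la \twoheadrightarrow \widetilde{K}_\la'$ and $i : \widetilde{K}_\la' \hookrightarrow \widetilde{K}_\la^+$, in order to track endomorphisms of $\widetilde{K}_\la$ through elements of $B(\la)$.

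Since Theorem \ref{GPprop} yields $[\widetilde{K}_\la^+ : L_\mu]_q = 0$ for $\mu \not\ge \la$, the universal property of $\widetilde{K}_\la$ gives $\mathrm{hom}_{A_n}(\widetilde{K}_\la, \widetilde{K}_\la^+) = \mathrm{hom}_{\mathfrak S_n}(L_\la, \widetilde{K}_\la^+)$; combined with Lemma \ref{Bla} and the freeness of $\widetilde{K}_\la^+$ as a $B(\la)$-module, this yields the identification $\mathrm{hom}_{A_n}(\widetilde{K}_\la, \widetilde{K}_\la^+) = B(\la) \cdot u$. For $\phi \in \mathrm{end}_{A_n}(\widetilde{K}_\la)$ I then define $b_\phi \in B(\la)$ by $u \circ \phi = b_\phi \cdot u$. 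I claim $b_\phi \in B(\la)^{\mathfrak S(\la)}$: for $\sigma \in \mathfrak S(\la)$, viewed as an $A_n$-module automorphism of $\widetilde{K}_\la^+$, Lemma \ref{udeg0} forces $\sigma \circ u = c_\sigma \cdot u$ for some $c_\sigma \in \C^\times$ (as $\sigma \circ u$ is another degree-$0$ map $\widetilde{K}_\la \to \widetilde{K}_\la^+$), and then the calculation $(\sigma^{-1} b_\phi \sigma) \cdot u = c_\sigma c_{\sigma^{-1}} b_\phi \cdot u = b_\phi \cdot u$ (using that $c : \mathfrak S(\la) \to \C^\times$ is a group homomorphism) combined with the injectivity of $b \mapsto b \cdot u$ yields $\sigma^{-1} b_\phi \sigma = b_\phi$. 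By Proposition \ref{ext-est}, restriction identifies $B(\la)^{\mathfrak S(\la)}$ with $\mathrm{end}_{A_n}(\widetilde{K}_\la')$, and setting $\alpha(\phi) := b_\phi|_{\widetilde{K}_\la'}$ gives the desired algebra homomorphism, multiplicative because $b_{\phi_1 \phi_2} \cdot u = u \circ \phi_1 \phi_2 = b_{\phi_1} b_{\phi_2} \cdot u$. Surjectivity follows by lifting $b \cdot L_0 \subset \widetilde{K}_\la'$ along $p$ to an $\mathfrak S_n$-equivariant map $L_\la \to \widetilde{K}_\la$ (possible since $p$ surjects on each $L_\la$-isotypic graded piece) and extending via the universal property of $\widetilde{K}_\la$.

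For the center, $A_n$-linearity of $u$ shows that $b_\phi$ is the $f$-action on $\widetilde{K}_\la^+$ when $\phi$ is the $f$-action on $\widetilde{K}_\la$ ($f \in \C[X_1,\ldots,X_n]^{\mathfrak S_n}$); this produces the surjection $Z(\la) \twoheadrightarrow Z'(\la)$. For the final claim, I check that the image of $\C[X_1,\ldots,X_n]^{\mathfrak S_n}$ in $\mathrm{end}_{A_n}(\widetilde{K}_\la^+)$ sits inside $B(\la)^{\mathfrak S(\la)}$, so that under the restriction isomorphism of Proposition \ref{ext-est} it corresponds to $Z'(\la)$. By centrality of $f$ and Lemma \ref{Ktriv}, $f(X)(g \otimes v) = g \otimes f(Y_{j(1)}, \ldots, Y_{j(n)}) v$ for $g \otimes v \in \widetilde{K}_\la^+ = \mathrm{Ind}^{A_n}_{A_\la}(\widetilde{K}_{(\la_1)} \boxtimes \cdots \boxtimes \widetilde{K}_{(\la_{\ell(\la)})})$, where $j(i)$ is the block containing $i$ and $X_i$ acts on $\widetilde{K}_{(\la_{j(i)})}$ as $Y_{j(i)}$; symmetry of $f$ together with each $Y_j$ appearing $\la_j$ times among the arguments makes the resulting element $\mathfrak S(\la)$-invariant.

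The main obstacle is the $\mathfrak S(\la)$-invariance of $b_\phi$, which is precisely what forces a general endomorphism of $\widetilde{K}_\la$ to descend to $\widetilde{K}_\la'$; the subtle point is that $u$ is only unique up to scalar rather than genuinely $\mathfrak S(\la)$-equivariant, but the scalar ambiguity given by the character $c : \mathfrak S(\la) \to \C^\times$ cancels in the key calculation.
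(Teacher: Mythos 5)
Your proof is correct, and is somewhat more hands-on than the paper's. The paper routes everything directly through $\widetilde{K}_\la'$: since $\widetilde{K}_\la'$ is a quotient of $\widetilde{K}_\la$ with all composition factors $L_\mu$, $\mu \ge \la$, post-composition with $\widetilde{K}_\la \twoheadrightarrow \widetilde{K}_\la'$ gives a surjection $\mathrm{end}_{A_n}(\widetilde{K}_\la) \twoheadrightarrow \mathrm{hom}_{A_n}(\widetilde{K}_\la, \widetilde{K}_\la') \cong \mathrm{hom}_{\mathfrak S_n}(L_\la, \widetilde{K}_\la')$, and the last space is then identified with $\mathrm{end}_{A_n}(\widetilde{K}_\la')$ by combining Proposition \ref{ext-est} with Lemma \ref{Bla}. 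Because $\widetilde{K}_\la'$ already lives in the world of $B(\la)^{\mathfrak S(\la)}$, the $\mathfrak S(\la)$-invariance is absorbed into Proposition \ref{ext-est} and never has to be checked by hand. You instead route through $\widetilde{K}_\la^+$, identifying $\mathrm{hom}_{A_n}(\widetilde{K}_\la, \widetilde{K}_\la^+)$ with the free rank-one $B(\la)$-module $B(\la)\cdot u$, and you then have to establish separately that the element $b_\phi$ is $\mathfrak S(\la)$-invariant. Your scalar-cancellation argument — using the one-dimensionality of the degree-$0$ hom space (Lemma \ref{udeg0}) to get $\sigma\circ u = c_\sigma u$, then exploiting $c_\sigma c_{\sigma^{-1}}=1$ — is valid and is a genuine extra step your longer route requires; the paper's choice of $\widetilde{K}_\la'$ as intermediary simply sidesteps it. Your argument for the surjectivity onto $\mathrm{end}_{A_n}(\widetilde{K}_\la')$ and for the two center statements (that $b_\phi$ is the $f$-action when $\phi$ is, and that $f(Y_{j(1)},\dots,Y_{j(n)})$ lands in $B(\la)^{\mathfrak S(\la)}$) fleshes out what the paper states in one line each, and is correct.
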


\begin{proof}
By the construction of $\widetilde{K}_\la$, we have
$$\mathrm{end}_{A_n} ( \widetilde{K}_\la ) \longrightarrow \!\!\!\!\! \rightarrow \mathrm{hom}_{A_n} ( \widetilde{K}_\la, \widetilde{K}_\la' ) \cong \mathrm{hom}_{\mathfrak S_n} ( L_\la, \widetilde{K}_\la' ).$$
In view of Proposition \ref{ext-est} and Lemma \ref{Bla}, we have
$$\mathrm{hom}_{A_n} ( \widetilde{K}_\la, \widetilde{K}_\la' ) \cong \mathrm{hom}_{\mathfrak S_n} ( L_\la, \widetilde{K}_\la' ) \cong \mathrm{end}_{A_n} ( \widetilde{K}_\la' ).$$
  This proves the first assertion, as the action of $\C [X_1,\ldots,X_n]^{\mathfrak S_n}$ on $\widetilde{K}_\la'$ factors through $\widetilde{K}_\la$. The second assertion follows as the action of $\C [X_1,\ldots,X_n]^{\mathfrak S_n}$ on $\widetilde{K}_\la^+$ factors through $B(\la)^{\mathfrak S (\la)} = \mathrm{end}_{A_n} (\widetilde{K}_\la')$.
\end{proof}

\begin{lem}\label{fg-end}
For each $\la \in \cP_n$, the algebra $\mathrm{end}_{A_n} ( \widetilde{K}_\la )$ is a finitely generated module over $\C [X_1,\ldots,X_n]^{\mathfrak S_n}$.
\end{lem}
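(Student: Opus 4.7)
The plan is to reduce the statement to a standard Noetherian finiteness argument. The essential observation is that $\C [X_1,\ldots,X_n]^{\mathfrak S_n}$ is a central subring of $A_n$ over which $A_n$ itself is a finitely generated module. Indeed, by the Chevalley--Shephard--Todd theorem, $\C [X_1,\ldots,X_n]$ is free of rank $n!$ over $\C [X_1,\ldots,X_n]^{\mathfrak S_n}$, and hence $A_n = \C \mathfrak S_n \ltimes \C [X_1,\ldots,X_n]$ is free of rank $(n!)^2$ over this central subring. Note also that $\C [X_1,\ldots,X_n]^{\mathfrak S_n}$ is itself a Noetherian commutative ring, being a polynomial ring in the elementary symmetric functions.

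Applied to $P_\la = A_n e_\la$, this immediately yields that $P_\la$ is finitely generated as a $\C [X_1,\ldots,X_n]^{\mathfrak S_n}$-module, and hence so is its quotient $\widetilde{K}_\la$.

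Now the plan is to choose homogeneous generators $m_1, \ldots, m_r \in \widetilde{K}_\la$ of $\widetilde{K}_\la$ as a $\C [X_1,\ldots,X_n]^{\mathfrak S_n}$-module. Since $\C [X_1,\ldots,X_n]^{\mathfrak S_n}$ is central in $A_n$, every $f \in \mathrm{end}_{A_n} ( \widetilde{K}_\la )$ is automatically $\C [X_1,\ldots,X_n]^{\mathfrak S_n}$-linear, so the evaluation map
$$\mathrm{end}_{A_n} ( \widetilde{K}_\la ) \longrightarrow \widetilde{K}_\la^{\oplus r}, \qquad f \mapsto ( f ( m_1 ), \ldots, f ( m_r ) )$$
is an injective $\C [X_1,\ldots,X_n]^{\mathfrak S_n}$-module homomorphism. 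The target is a finitely generated module over a Noetherian commutative ring, and so its submodules are finitely generated as well. This yields the claim.

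I anticipate no serious obstacle: every step is elementary once the finiteness of $A_n$ over its central subring $\C [X_1,\ldots,X_n]^{\mathfrak S_n}$ is acknowledged. The only minor points are (i) to note that the argument is compatible with gradings because generators can be chosen homogeneous, and (ii) to observe that although $\mathrm{end}_{A_n} ( \widetilde{K}_\la )$ is an algebra, the claim only concerns its $\C [X_1,\ldots,X_n]^{\mathfrak S_n}$-module structure, which is exactly what the injection above controls.
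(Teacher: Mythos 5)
Your proof is correct, and it takes a genuinely different route from the paper's. The paper first uses that $\mathrm{end}_{A_n}(P_\la)$ surjects onto $\mathrm{end}_{A_n}(\widetilde{K}_\la)$ (a consequence of projectivity of $P_\la$ together with the fact that the defining submodule of $\widetilde{K}_\la$ is stable under all endomorphisms of $P_\la$), then identifies $\mathrm{end}_{A_n}(P_\la)$ with $\mathrm{Hom}_{\mathfrak S_n}(L_{(n)}, \mathrm{End}_\C(L_\la)\otimes \C[X_1,\ldots,X_n])$ and reads off finite generation from that explicit description. You bypass the projective cover entirely: you show that $A_n$ is module-finite (indeed free of rank $(n!)^2$) over its central subring $\C[X_1,\ldots,X_n]^{\mathfrak S_n}$, so $\widetilde{K}_\la$ is a finitely generated module over this Noetherian ring, and then embed $\mathrm{end}_{A_n}(\widetilde{K}_\la)$ into $\widetilde{K}_\la^{\oplus r}$ by evaluation on generators. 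Your argument is more elementary and more robust in that it applies verbatim to $\mathrm{end}_{A_n}(M)$ for \emph{any} finitely generated graded $A_n$-module $M$; the paper's identity for $\mathrm{end}_{A_n}(P_\la)$ is special to projectives. One small stylistic point: centrality of $\C[X_1,\ldots,X_n]^{\mathfrak S_n}$ is used not so much to make each $f$ linear over it (that is automatic for any subring of $A_n$) as to ensure $(c\cdot f)(m):=c\,f(m)$ remains an $A_n$-module map, i.e.\ that $\mathrm{end}_{A_n}(\widetilde{K}_\la)$ carries a $\C[X_1,\ldots,X_n]^{\mathfrak S_n}$-module structure at all; you use this implicitly and correctly, but it is worth spelling out.
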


\begin{proof}
Since we have a surjection $\mathrm{end}_{A_n} ( P_\la ) \to \mathrm{end}_{A_n} ( \widetilde{K}_\la )$, it suffices to see that
$\mathrm{end}_{A_n} ( P_\la )$ is a finitely generated module over $\C [X_1,\ldots,X_n]^{\mathfrak S_n}$. We have
$$\mathrm{end}_{A_n} ( P_\la ) \cong \mathrm{Hom}_{\mathfrak S_n} ( L_{(n)}, \mathrm{End} _\C ( L _\la ) \otimes \C [X_1,\ldots,X_n] ).$$
The RHS is a finitely generated module over $\C [X_1,\ldots,X_n]^{\mathfrak S_n}$ as required.
\end{proof}

For two power series with integer coefficients
$$f ( q ) = \sum_{m} f_m q^m, g ( q ) = \sum_{m} g_m q^m \in \Z (\!(q)\!),$$
we say $f ( q ) \le g ( q )$ if we have $f_m \le g_m$ for every $m \in \Z$. We say $f ( q ) \ll g ( q )$ if
\begin{equation}
\lim_{m \to \infty} \frac{\sup \{ f_k \mid k \le m\}}{\sup \{ g_k \mid k \le m\}} = 0.\label{neg-def}
\end{equation}

\begin{thm}[\cite{Mat86}]\label{gineq}
Let $R$ be a finitely generated graded integral algebra with $\C = R_0$, and let $S$ be its proper graded quotient algebra. For a finitely generated graded $S$-module $M$, we have
$$\mathrm{gdim} \, M \ll \mathrm{gdim} \, R.$$ 
\end{thm}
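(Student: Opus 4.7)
The plan is to reduce the statement to the standard Hilbert--Samuel theory of graded modules over finitely generated graded $\C$-algebras via a Krull dimension comparison.

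Since $R$ is a finitely generated graded integral domain with $R_0 = \C$, the graded version of Noether normalization produces homogeneous algebraically independent elements $y_1, \ldots, y_d \in R$ (with $d = \dim R$) of positive degree, over which $R$ is a finitely generated module. The Hilbert series $\mathrm{gdim}\, R$ is then rational of the form $P_R(q)/\prod_{i=1}^d (1 - q^{a_i})$ with $a_i = \deg y_i > 0$ and $P_R(q) \in \Z[q]$ satisfying $P_R(1) > 0$, the last inequality holding because $R$ is torsion-free of positive rank over $\C[y_1, \ldots, y_d]$. In particular, $\dim R_m$ is asymptotically equal to a positive constant times $m^{d-1}$.

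Since $S$ is a proper graded quotient of $R$, the kernel $I$ of $R \to S$ is a non-zero graded ideal that contains a non-zero divisor, $R$ being an integral domain. Krull's principal ideal theorem then gives $\dim S \le d - 1$. Every finitely generated graded $S$-module $M$ is accordingly a finitely generated graded $R$-module with $\dim_R M \le \dim S < d$, so
$$\mathrm{gdim}\, M = \frac{P_M(q)}{\prod_{j=1}^e (1 - q^{b_j})}, \qquad e := \dim_R M < d,$$
and $\dim M_m = O(m^{e-1})$.

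Taking suprema of coefficients over initial segments preserves these asymptotics, so for some $c > 0$ and all sufficiently large $m$ one has
$$\sup \{ \dim M_k \mid k \le m \} = O(m^{e-1}), \qquad \sup \{ \dim R_k \mid k \le m \} \ge c\, m^{d-1}.$$
The ratio appearing in (\ref{neg-def}) is then $O(m^{e-d}) \to 0$ as $m \to \infty$, yielding $\mathrm{gdim}\, M \ll \mathrm{gdim}\, R$.

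The main technical point lies in securing the genuine lower bound $\sup \{ \dim R_k \mid k \le m \} \ge c\, m^{d-1}$ rather than merely an upper bound of this order; this is where the integrality hypothesis is essential, as it ensures that $R$ has positive generic rank over its Noether normalization and hence that the leading coefficient of its Hilbert polynomial is positive. Once this is in place, the remaining steps are routine applications of the graded Hilbert--Samuel theorem as exposited in Matsumura's book.
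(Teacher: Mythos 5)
Your argument is correct and follows essentially the same route as the paper: both reduce the claim to the relation between Krull dimension and the growth rate of the Hilbert function (Matsumura's Theorem 13.4), driven by the strict inequality $\dim S < \dim R$. The paper phrases this through the completion, treating $R$ and $S$ as local rings and invoking local Hilbert--Samuel theory, whereas you stay in the graded world via graded Noether normalization; the content is the same, and your version is arguably more self-contained.

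One remark in your closing paragraph is misattributed. The lower bound $\sup\{\dim R_k : k \le m\} \ge c\, m^{d-1}$ does \emph{not} rely on $R$ being a domain: graded Noether normalization gives an inclusion $\C[y_1,\ldots,y_d] \hookrightarrow R$ for \emph{any} finitely generated graded $\C$-algebra with $R_0 = \C$, so $\mathrm{gdim}\,R$ dominates $\mathrm{gdim}\,\C[y]$ coefficientwise and the generic rank is automatically positive. Integrality is used precisely and only where you invoke Krull's principal ideal theorem: in a domain, any nonzero homogeneous element of $\ker(R\to S)$ is a non-zero-divisor, which is what forces $\dim S < \dim R$. Without it the theorem is false (take $R$ a graded product of two polynomial rings and $S$ one of the factors: a proper quotient of the same dimension). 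Also, the phrase ``$\dim R_m$ is asymptotically equal to a positive constant times $m^{d-1}$'' is not literally true when the Noether normalization generators have degrees $> 1$ (the Hilbert function can vanish on residue classes); you correct for this yourself by passing to partial suprema, but the intermediate sentence should be softened to a statement about $\sup\{\dim R_k: k\le m\}$ or about partial sums.
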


\begin{proof}
This follows form \cite[Theorem 13.4]{Mat86} if we take into account the Krull dimension inequality $\dim \, R > \dim \, S$, and the completion with respect to the grading makes $R$ and $S$ into local rings.
\end{proof}

\begin{lem}\label{K'-eig}
For each $\la \in \cP_n$ and an algebra quotient $Z'(\la) \to \C$, the actions of $X_1,X_2,\ldots,X_n$ on $\C \otimes_{Z'(\la)} \widetilde{K}_\la'$ and $\C \otimes_{Z'(\la)} \widetilde{K}_\la^+$ have joint eigenvalues of shape
\begin{equation}
\alpha_1 = \cdots = \alpha_{\la_1}, \alpha_{\la_1 + 1} = \cdots = \alpha_{\la_1 + \la_2}, \ldots, \alpha_{n - \la_{\ell (\la )}+1} = \cdots = \alpha_n \label{a-config}
\end{equation}
up to $\mathfrak S_n$-permutation.
\end{lem}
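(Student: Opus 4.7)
The plan is to reduce the analysis to Lemma \ref{Ktriv} and then track the $\C[X_1,\ldots,X_n]$-action through the induction defining $\widetilde{K}_\la^+$. Let $b:[1,n]\to[1,\ell(\la)]$ denote the block function sending $i$ to the unique $j$ with $\la_1+\cdots+\la_{j-1}<i\le\la_1+\cdots+\la_j$. By Lemma \ref{Ktriv}, the factor $\widetilde{K}_{(\la_j)}\cong L_{(\la_j)}\otimes\C[Y_j]$ has all $X_i$ with $b(i)=j$ acting as the single variable $Y_j$. Hence on $M:=\widetilde{K}_{(\la_1)}\boxtimes\cdots\boxtimes\widetilde{K}_{(\la_{\ell(\la)})}$, the element $X_i\in A_\la$ acts as multiplication by $Y_{b(i)}$.

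First I would decompose $\widetilde{K}_\la^+=A_n\otimes_{A_\la}M=\bigoplus_w w\otimes M$ with $w$ running over a set of coset representatives of $\mathfrak S_n/\mathfrak S_\la$, and use the commutation $X_i w=w X_{w^{-1}(i)}$ in $A_n$ to compute
\[
X_i\cdot(w\otimes m)=w\otimes(X_{w^{-1}(i)}\cdot m)=w\otimes(Y_{b(w^{-1}(i))}\cdot m).
\]
Thus each $X_i$ stabilizes each coset $w\otimes M$ and acts through multiplication by $Y_{b(w^{-1}(i))}$.

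Next I would use this formula to identify $Z'(\la)$ concretely. For any $f\in\C[X_1,\ldots,X_n]^{\mathfrak S_n}$, the endomorphism induced on the coset $w\otimes M$ equals $f(Y_{b(w^{-1}(1))},\ldots,Y_{b(w^{-1}(n))})$; since $f$ is $\mathfrak S_n$-symmetric, this equals $f(\underbrace{Y_1,\ldots,Y_1}_{\la_1},\ldots,\underbrace{Y_{\ell(\la)},\ldots,Y_{\ell(\la)}}_{\la_{\ell(\la)}})$ independently of $w$. Therefore $Z'(\la)$ embeds as a subring of $\C[Y_1,\ldots,Y_{\ell(\la)}]^{\mathfrak S(\la)}$, and an algebra quotient $Z'(\la)\to\C$ corresponds to a choice of $\mathfrak S(\la)$-orbit of tuples $(\beta_1,\ldots,\beta_{\ell(\la)})\in\C^{\ell(\la)}$ giving the joint eigenvalues of $(Y_1,\ldots,Y_{\ell(\la)})$ on $\C\otimes_{Z'(\la)}M$.

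Combining the two steps, on a joint eigenvector of $(X_1,\ldots,X_n)$ lying in a fixed coset $w\otimes M$ of $\C\otimes_{Z'(\la)}\widetilde{K}_\la^+$, the eigenvalue $\alpha_i$ of $X_i$ equals $\beta_{\tau(b(w^{-1}(i)))}$ for some $\tau\in\mathfrak S(\la)$. Relabeling via the permutation $w$ (replacing $i$ by $w(i)$), the tuple becomes $\alpha_{w(i)}=\beta_{\tau(b(i))}$, which is constant on each block of $\la$ by the definition of $b$; this is precisely the shape (\ref{a-config}) up to the $\mathfrak S_n$-permutation $w$. The statement for $\widetilde{K}_\la'$ is immediate: as a $Z'(\la)$-stable graded $A_n$-submodule of $\widetilde{K}_\la^+$, its joint $X$-eigenvalues on $\C\otimes_{Z'(\la)}\widetilde{K}_\la'$ form a subset of those on $\C\otimes_{Z'(\la)}\widetilde{K}_\la^+$. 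The only real point requiring care is the bookkeeping that translates the coset-representative $w$ into the $\mathfrak S_n$-permutation appearing in the conclusion, but no conceptual input beyond Lemma \ref{Ktriv} and the commutation relation $X_iw=wX_{w^{-1}(i)}$ is needed.
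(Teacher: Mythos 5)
Your argument is essentially the paper's proof, spelled out in full: the paper's entire justification is that $\C\otimes_{Z'(\la)}\widetilde{K}_\la'$ and $\C\otimes_{Z'(\la)}\widetilde{K}_\la^+$ are finite-dimensional (by Lemma \ref{fg-end}) so joint eigenvalues exist, and "their values can be read-off from (\ref{Kplus})"; your coset decomposition $\bigoplus_w w\otimes M$ together with Lemma \ref{Ktriv} and the commutation $X_i w = w X_{w^{-1}(i)}$ is precisely what that reading-off amounts to.

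Two small points to be careful about. First, you never establish that the specializations are finite-dimensional (so that joint eigenvalues make sense at all); the paper gets this from Lemma \ref{fg-end}, while in your setup it follows because each $Y_j$ is integral over $Z'(\la)$ — the monic polynomial $\prod_j(t-Y_j)^{\la_j}$ has coefficients equal to the images of $e_k(X_1,\ldots,X_n)$, which lie in $Z'(\la)$ — so $\C[Y_1,\ldots,Y_{\ell(\la)}]$ is a finite $Z'(\la)$-module and the fiber over $\mathfrak m=\ker(Z'(\la)\to\C)$ is finite. Second, the closing "its joint $X$-eigenvalues form a subset" for $\widetilde{K}_\la'$ is a little too quick, since $\C\otimes_{Z'(\la)}(-)$ is not left exact and the natural map $\C\otimes_{Z'(\la)}\widetilde{K}_\la'\to\C\otimes_{Z'(\la)}\widetilde{K}_\la^+$ need not be injective; a cleaner way to finish is to note that the elements $e_k(X_1,\ldots,X_n)$ act on $\C\otimes_{Z'(\la)}\widetilde{K}_\la'$ by the scalars $e_k$ of the multiset $\{Y_j^{(\la_j)}\}$ specialized at $\mathfrak m$, which pins down the unordered tuple $(\alpha_1,\ldots,\alpha_n)$ to the shape (\ref{a-config}) regardless of injectivity.
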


\begin{proof}
By Lemma \ref{fg-end}, the modules $\C \otimes_{Z'(\la)} \widetilde{K}_\la'$ and $\C \otimes_{Z'(\la)} \widetilde{K}_\la^+$ must be finite-dimensional. Hence, the actions of $X_1,\ldots,X_n$ have joint eigenvalues. Their values can be read-off from (\ref{Kplus}).
\end{proof}

\begin{thm}\label{support}
For each $\la \in \cP_n$, we have
$$\mathrm{gdim} \, \ker \left ( \mathrm{end}_{A_n} ( \widetilde{K}_\la ) \to \mathrm{end}_{A_n} ( \widetilde{K}'_\la ) \right) \ll \mathrm{gdim} \, \mathrm{end}_{A_n} ( \widetilde{K}'_\la ).$$
\end{thm}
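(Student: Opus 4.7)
The plan is to apply Matsumura's dimension estimate (Theorem~\ref{gineq}) to the kernel $K := \ker(E \to E')$, where $E := \mathrm{end}_{A_n}(\widetilde{K}_\la)$ and $E' := \mathrm{end}_{A_n}(\widetilde{K}'_\la)$, by viewing $K$ as a finitely generated graded module over an appropriate integral quotient of the central subring $Z(\la) \subset E$.

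First, I record the basic data. By Lemma~\ref{fg-end}, $E$ is finitely generated over $\C[X_1,\ldots,X_n]^{\mathfrak S_n}$ and hence over its image $Z(\la)$, so $K$ is a finitely generated graded $Z(\la)$-module. The surjection $Z(\la) \twoheadrightarrow Z'(\la)$ combined with the inclusion $Z'(\la) \hookrightarrow B(\la)^{\mathfrak S(\la)}$ (Proposition~\ref{ext-est}) and Lemma~\ref{K'-eig} shows that $\Spec Z'(\la)$ is the closed subvariety of $\Spec\C[X_1,\ldots,X_n]^{\mathfrak S_n}$ parameterising $\la$-block configurations, so $\dim Z'(\la) = \ell(\la)$ and $\mathrm{gdim}(Z'(\la))$ has pole order $\ell(\la)$ at $q=1$. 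I then argue that $Z(\la)$ has the same Krull dimension, i.e.\ that the kernel of $Z(\la) \to Z'(\la)$ is nilpotent. The key input is that every composition factor of $\widetilde{K}_\la$ is an $L_\mu$ with $\mu \ge \la$, on which every $X_i$ acts as $0$, which confines the $\C[X_1,\ldots,X_n]^{\mathfrak S_n}$-support of $\widetilde{K}_\la$ to the same configuration variety $V_\la/\mathfrak S_n$ that Lemma~\ref{K'-eig} attaches to $\widetilde{K}^+_\la$.

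Next, I apply Theorem~\ref{gineq}. Filtering $K$ by successive powers of the nilradical of $Z(\la)$, it suffices to bound $\mathrm{gdim}$ of each subquotient, each of which is a finitely generated graded module over $Z(\la)_{\mathrm{red}}$. On each irreducible component $R$ of $\Spec Z(\la)_{\mathrm{red}}$, $R$ is a finitely generated graded integral algebra of Krull dimension $\ell(\la)$, so $\mathrm{gdim}(R)$ has pole order $\ell(\la)$ at $q=1$, matching $b_\la^{-1}$ at leading order. I show that the restriction $K \otimes_{Z(\la)} R$ is annihilated by a nonzero element of $R$: at a generic closed point $\mathfrak m \in \Spec R$ corresponding to a $\la$-configuration with pairwise distinct block values, the induced map $E/\mathfrak m E \to E'/\mathfrak m E'$ is an isomorphism, since in that generic fibre both $\widetilde{K}_\la$ and $\widetilde{K}'_\la$ recover the same Springer-fibre-like module. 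Consequently $K$ is supported on a proper closed subscheme of $\Spec R$, and Theorem~\ref{gineq} yields $\mathrm{gdim}(K \otimes_{Z(\la)} R) \ll \mathrm{gdim}(R)$. Summing over the finitely many irreducible components, $\mathrm{gdim}(K)$ has pole order strictly less than $\ell(\la)$, which is exactly $\mathrm{gdim}(K) \ll b_\la^{-1}$.

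The hardest step is the simultaneous verification of the two ``generic'' claims: the support bound on $\widetilde{K}_\la$ needed for $\dim Z(\la) = \ell(\la)$, and the generic fibre isomorphism $E_{\mathfrak m} \cong E'_{\mathfrak m}$. Both amount to saying that $\widetilde{K}_\la$ behaves as expected at a generic $\la$-configuration; in full strength this is morally the freeness claim in Theorem~\ref{main}(2), but because Matsumura's estimate requires only asymptotic information I plan to extract both by a dimension count combining Lemma~\ref{K'-eig}, Proposition~\ref{char-K}, and the finite generation of Lemma~\ref{fg-end}, without invoking the main theorem and thereby circumventing the circularity.
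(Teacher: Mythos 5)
Your high-level scaffolding matches the paper's: filter the kernel $K$ into subquotients that are modules over $Z(\la)_{\mathrm{red}}$, show (i) $\Spec Z(\la)$ and $\Spec Z'(\la)$ have the same reduced support and (ii) the generic fibres of $E$ and $E'$ coincide, and then invoke Theorem~\ref{gineq}. However, the two claims (i) and (ii) are precisely the mathematical content of Theorem~\ref{support}, and your justifications for both are incomplete or incorrect.

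For (i), your argument is: ``every composition factor of $\widetilde{K}_\la$ is an $L_\mu$, on which every $X_i$ acts as $0$, which confines the $\C[X_1,\ldots,X_n]^{\mathfrak S_n}$-support of $\widetilde{K}_\la$ to the configuration variety $V_\la/\mathfrak S_n$.'' This is a non-sequitur. If the reasoning were valid it would confine the support to the \emph{origin}, not to $V_\la/\mathfrak S_n$; and in fact it confines nothing, because $\widetilde{K}_\la$ is infinite-dimensional so the grading filtration is not finite. The cautionary example is $\C[X]$ itself: every graded composition factor is $\C$ with $X$ acting by zero, yet $\C[X]$ is supported on all of $\A^1$. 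One cannot read the support of a graded module off the annihilators of its composition factors. The paper instead analyzes the \emph{finite-dimensional specializations} $\C\otimes_{Z}\widetilde K_\la$ at a maximal ideal $\mathfrak n \subset Z$: it picks a generalized $\{X_i\}$-eigenspace $M$, chooses irreducibles inside it, uses the distinctness of eigenvalues to obtain an injection $\mathrm{Ind}^{\mathfrak S_n}_{\mathfrak S_\mu}(\cdots)\hookrightarrow M$, and then uses the Littlewood--Richardson rule together with Lemma~\ref{Kmult} to force the eigenvalue multiplicity pattern $\mu$ to be a merging of parts of $\la$; this is what pins $\mathfrak n$ to pull back from $Z'$.

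For (ii), you explicitly acknowledge that the generic fibre isomorphism $E/\mathfrak m E\cong E'/\mathfrak m E'$ is ``morally the freeness claim in Theorem~\ref{main}(2)'' and say you ``plan to extract [it] by a dimension count'' --- but no such count is given, and it is not clear how one would run it without effectively redoing the eigenvalue analysis. The paper closes this gap by observing that at a generic $\la$-configuration (all block values distinct) the Littlewood--Richardson argument forces $\mu=\la$ with $\mu^{[i]}=(\mu_i)$, so by Lemma~\ref{Ktriv} the only way to thicken the generic specialization of $\widetilde K_\la$ as an $A_\la$-module is via the sums $X_1+\cdots+X_{\la_1},\,X_{\la_1+1}+\cdots+X_{\la_1+\la_2},\ldots$, all of which lie inside the action of $B(\la)$; hence the generic specializations of $E$ and $E'$ agree. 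Without something replacing this argument, your proposal has identified where the difficulty lies but has not resolved it.
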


\begin{proof}
We set $Z := Z(\la)$ and $Z' := Z'(\la)$ during this proof. The specialization $\C \otimes_{Z} \widetilde{K}_\la$ with respect to a maximal ideal $\mathfrak n \subset Z$ decomposes into the generalized eigenspaces with respect to $X_1,\ldots,X_n$, whose set of joint eigenvalues in $\C$ have multiplicities $\mu_1, \mu_2,\ldots,\mu_{\ell}$ that constitute a partition $\mu$ of $n$. We have
\begin{equation}
[\C \otimes_{Z} \widetilde{K}_\la : L_{\gamma}]_{\mathfrak S_n} = 0 \hskip 5mm \la \not\le \gamma\label{Ktmult}
\end{equation}
by the definition of $\widetilde{K}_\la$ and the fact that $\mathfrak S_n$ has semi-simple representation theory. Being the cyclic $A_n$-module generator, we have $[\C \otimes_{Z} \widetilde{K}_\la : L_{\la}]_{\mathfrak S_n} \neq 0$.

We can choose a non-zero generalized eigenspace
$$M \subset \C \otimes_{Z} \widetilde{K}_\la$$
of $X_1, \ldots, X_n$ that can be regarded as an (ungraded) $A_\la$-module. We choose
\begin{equation}
L_{\mu^{[1]}} \boxtimes L_{\mu^{[2]}} \boxtimes \cdots \boxtimes L_{\mu^{[\ell]}} \subset M \hskip 5mm \ell = \ell (\mu)\label{small}
\end{equation}
as $\mathfrak S_\mu$-modules with partitions $\mu^{[1]}, \ldots, \mu^{[\ell]}$ of $\mu_1,\ldots, \mu_{\ell}$, respectively. Since each piece of the external tensor products of (\ref{small}) have distinct $X$-eigenvalues, we deduce
\begin{equation}
\mathrm{Ind}^{\mathfrak S_n} _{\mathfrak S_{\mu}} ( L_{\mu^{[1]}} \boxtimes L_{\mu^{[2]}} \boxtimes \cdots \boxtimes L_{\mu^{[\ell]}} )\hookrightarrow M.\label{ind-small}
\end{equation}
By the Littlewood-Richardson rule, the smallest label (with respect to $\le$) of $\mathfrak S_n$-module that appears in the LHS of (\ref{ind-small}) is attained by $\kappa \in \cP_n$ such that
$$m_i ( \kappa ) = \sum_{j = 1}^{\ell} m_i ( \mu^{[j]} ) \hskip 5mm i \ge 1.$$
For an appropriate choice in (\ref{small}), we attain $\kappa = \la$ by Lemma \ref{Kmult}. It follows that $\mu$ defines a division of entries of $\la$ into small groups. In view of (\ref{a-config}), the maximal ideal $\mathfrak n \subset Z$ is the pullback of a maximal ideal of $Z'$. In other words, we find that $Z$ shares with the same support as $Z'$ in $\Spec \, \C [X_1,\ldots,X_n]^{\mathfrak S_n}$.

We define graded $A_n$-modules $N_r$ ($r \ge 1$) as: 
$$N_r := \ker \left ( \mathrm{end}_{A_n} ( \widetilde{K}_\la ) \to \mathrm{end}_{A_n} ( \widetilde{K}'_\la ) \right)^r / \left( \ker \left ( \mathrm{end}_{A_n} ( \widetilde{K}_\la ) \to \mathrm{end}_{A_n} ( \widetilde{K}'_\la ) \right) \right)^{r+1}.$$
We show that each $N_r$ is supported in a proper subset of $\Spec \, Z'$. Equivalently, we show that general specializations of $\mathrm{end}_{A_n} ( \widetilde{K}_\la )$ and $\mathrm{end}_{A_n} ( \widetilde{K}'_\la )$ with respect to (\ref{a-config}) are the same. In view of the above construction of the partitions $\mu$ and $\kappa$, we have necessarily $\la = \mu$ and $\mu^{(i)} = ( \mu_i )$ for each $i \ge 1$ as otherwise smaller partitions arise. By Lemma \ref{Ktriv}, a thickening of (\ref{ind-small}) as (ungraded) $A_\la$-modules must be achieved by the actions of
\begin{equation}
X_1 + \cdots + X_{\la_1}, X_{\la_1 + 1} + \cdots + X_{\la_1 + \la_2}, \ldots, X_{n - \la_{\ell (\la )}+1} + \cdots + X_n. \label{x-config}
\end{equation}
As these are contained in the action of $B ( \la )$, we conclude that general specializations of $\mathrm{end}_{A_n} ( \widetilde{K}_\la )$ and $\mathrm{end}_{A_n} ( \widetilde{K}'_\la )$ are the same.

Therefore, Theorem \ref{gineq} implies
$$\mathrm{gdim} \, N_r \ll \mathrm{gdim} \, \mathrm{end}_{A_n} ( \widetilde{K}'_\la ) \hskip 5mm r > 0.$$
By Lemma \ref{fg-end} (and the support containment), we have only finitely many $r$ with $N_r \neq \{ 0 \}$. Again using Theorem \ref{gineq}, we conclude
$$\mathrm{gdim} \, \mathrm{end}_{A_n} ( \widetilde{K}_\la ) - \mathrm{gdim} \, \mathrm{end}_{A_n} ( \widetilde{K}'_\la ) = \sum_{r \ge 1} \mathrm{gdim} \, N_r \ll \mathrm{gdim} \, \mathrm{end}_{A_n} ( \widetilde{K}'_\la )$$
as required.
\end{proof}

\begin{prop}\label{selfex}
For each $\la \in \cP_n$, the module $\widetilde{K}_\la'$ admits a decreasing separable filtration whose associated graded is the direct sum of grading shifts of $K_\la$.
\end{prop}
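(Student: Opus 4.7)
The plan is to construct the filtration from the endomorphism ring action. Let $C := \mathrm{end}_{A_n}(\widetilde{K}_\la')$; by Proposition~\ref{ext-est}, $C \cong B(\la)^{\mathfrak S(\la)}$ is a graded polynomial ring in positive-degree generators with $\mathrm{gdim}\,C = b_\la^{-1}$, and so has a unique graded maximal ideal $\mathfrak{m}$. The candidate filtration is
\[
F_k \widetilde{K}_\la' := \mathfrak{m}^k \widetilde{K}_\la' \qquad (k \ge 0).
\]
Separability is automatic since $\mathfrak{m}$ is concentrated in strictly positive degrees while $\widetilde{K}_\la'$ is bounded below in grading, so in each fixed degree $F_k \widetilde{K}_\la'$ vanishes for $k$ sufficiently large.

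The first step is to identify the base $\widetilde{K}_\la'/\mathfrak{m}\widetilde{K}_\la' \cong K_\la$. By Lemma~\ref{Bla} and the proof of Proposition~\ref{ext-est}, the $L_\la$-isotypic component of $\widetilde{K}_\la'$ is freely generated over $C$ by the unique degree-$0$ copy of $L_\la$, so in the quotient this component collapses to that degree-$0$ copy alone. The defining universal property of $K_\la$ then factors the natural surjection $\widetilde{K}_\la \twoheadrightarrow \widetilde{K}_\la'/\mathfrak{m}\widetilde{K}_\la'$ through $K_\la$, producing a surjection $K_\la \twoheadrightarrow \widetilde{K}_\la'/\mathfrak{m}\widetilde{K}_\la'$; its bijectivity I would verify by matching the simple socle $L_{(n)}\langle n(\la)\rangle$ (Proposition~\ref{char-K}) via the Garsia--Procesi identification $K_\la \cong R_\la^*\langle n(\la)\rangle$ (Proposition~\ref{RK-id}).

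Next I would upgrade this to a graded $A_n \boxtimes C$-module isomorphism $\Phi \colon C \otimes_\C K_\la \xrightarrow{\sim} \widetilde{K}_\la'$ built from a graded $\mathfrak S_n$-equivariant splitting $\iota \colon K_\la \hookrightarrow \widetilde{K}_\la'$ of the base iso by $\Phi(c \otimes v) := c \cdot \iota(v)$. Nakayama's lemma applied to the graded local ring $C$ immediately gives surjectivity (the reduction of $\Phi$ modulo $\mathfrak{m}$ is the base iso). Once $\Phi$ is known to be an isomorphism, $F_k$ corresponds to $\mathfrak{m}^k \otimes K_\la$ and so $\mathrm{gr}^F_k \widetilde{K}_\la' \cong (\mathfrak{m}^k/\mathfrak{m}^{k+1}) \otimes_\C K_\la$ is a direct sum of grading shifts of $K_\la$, concluding the proof.

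The hard part will be the injectivity of $\Phi$, equivalently the $C$-freeness of $\widetilde{K}_\la'$, equivalently the graded character identity
\[
\Psi([\widetilde{K}_\la']) = Q_\la^\vee \quad\text{(i.e. } [\widetilde{K}_\la' : L_\mu]_q = b_\la^{-1}\,[K_\la : L_\mu]_q \text{ for all } \mu \in \cP_n\text{).}
\]
To settle it I would combine the Euler--Poincar\'e/Hall inner-product identification of Proposition~\ref{orth-id} with Theorem~\ref{support}: the latter shows that the $C$-action on $\widetilde{K}_\la'$ coincides with the generic specialization of the $\mathrm{end}_{A_n}(\widetilde{K}_\la)$-action over a Zariski dense open subset of $\Spec\,\C[X_1,\ldots,X_n]^{\mathfrak S_n}$, and Hall--Littlewood orthogonality should then pin down the graded character of $\widetilde{K}_\la'$, after which surjectivity of $\Phi$ forces it to be an isomorphism.
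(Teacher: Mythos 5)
Your filtration $F_k := \mathfrak m^k \widetilde{K}_\la'$ is exactly the one the paper implicitly uses, and the reduction to the two facts (base fiber $\cong K_\la$, and $C$-freeness) is the right skeleton. However, the proposed proofs of both facts have gaps, and the paper's actual argument — a genericity/specialization computation — is what fills them; your proposal skips it.

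For the base fiber identification, the socle-matching step does not close. You correctly produce a surjection $K_\la \twoheadrightarrow \widetilde{K}_\la'/\mathfrak m \widetilde{K}_\la'$ from the universal property; injectivity is equivalent to saying the unique copy of $L_{(n)}\left< n(\la)\right> = \mathsf{soc}\, K_\la$ is not contained in $\mathfrak m \widetilde{K}_\la'$. Proposition~\ref{RK-id} and Proposition~\ref{char-K} tell you where the socle of $K_\la$ sits, but they say nothing about which elements of $\widetilde{K}_\la'$ land in $\mathfrak m \widetilde{K}_\la'$, so ``matching socles'' is exactly the statement to be proven rather than a verification of it. The paper establishes it by an auxiliary module $\widetilde N$ (generated by the trivial representation rather than $L_\la$), a specialization at a $C$-maximal ideal where the eigenvalues $\alpha_{\la_1}, \alpha_{\la_1+\la_2}, \ldots$ are pairwise distinct, and the Littlewood--Richardson rule to force every joint eigenspace to be trivial over $\mathfrak S_\la$; semicontinuity then gives $[\C_0 \otimes_C \widetilde{K}_\la' : L_{(n)}] > 0$, which is what actually pins the socle. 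For the freeness step, the route you sketch does not get off the ground: Proposition~\ref{orth-id} identifies $\left< \bullet,\bullet\right>_{EP}$ with the Hall pairing but gives no way to compute $\left<[\widetilde{K}_\la'], [K_\mu]\right>_{EP}$ without the $\mathrm{ext}$-orthogonality of Theorem~\ref{main}~3), which is downstream of this proposition, and Theorem~\ref{support} concerns $\mathrm{end}_{A_n}(\widetilde{K}_\la)$ versus $\mathrm{end}_{A_n}(\widetilde{K}_\la')$ rather than the graded character of $\widetilde{K}_\la'$, so it does not pin the character either. The paper instead deduces freeness from the same genericity computation: the generic fiber is $\mathrm{Ind}^{\mathfrak S_n}_{\mathfrak S_\la}\mathsf{triv}$, whose dimension equals $\dim K_\la$ by Theorem~\ref{GPprop}~2), so the surjection $C\otimes K_\la \twoheadrightarrow \widetilde{K}_\la'$ is a surjection of $C$-modules of equal rank with a free source, and since $\widetilde{K}_\la'\subset\widetilde{K}_\la^+$ is torsion-free the kernel vanishes. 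So the substance of the paper's proof is precisely the genericity argument that your proposal defers to an unsubstantiated appeal to Proposition~\ref{orth-id} and Theorem~\ref{support}.
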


\begin{proof}
Consider the submodule $\widetilde{N} \subset \widetilde{K}_\la^+$ generated by the unique copy $L_{(n)} \subset \mathrm{Ind}^{\mathfrak S_n}_{\mathfrak S_\la} \mathsf{triv} = ( \widetilde{K}_\la^+ )_0$. In view of Lemma \ref{Bla}, we find
\begin{equation}
\hom_{\mathfrak S_n} ( L_{(n)}, \widetilde{N} ) \cong B ( \la )^{\mathfrak S ( \la )} \cong \hom_{\mathfrak S_n} ( L_{\la}, \widetilde{K}_\la' ).
\end{equation}
Consequently, we have $\mathrm{end}_{A_n} ( \widetilde{N} ) = B ( \la )^{\mathfrak S ( \la )}$.

Let $N$ and $K$ be the specializations of $\widetilde{N}$ and $\widetilde{K}_\la$ with respect to a maximal ideal of $B ( \la )^{\mathfrak S ( \la )}$ such that the joint eigenvalues $\al_{\la_1},\al_{\la_1+\la_2},\ldots,\al_{\ell ( \la )}$ in Lemma \ref{K'-eig} are distinct. Let $M$ be a joint $\{X_i\}_i$-eigenspace of $K$ or $N$, that is a $\mathfrak S_\la$-module. The distinct eigenvalue condition implies
\begin{equation}
\mathrm{Ind}^{\mathfrak S_n}_{\mathfrak S_{\la}} M \subset N \hskip 3mm \text{or} \hskip 3mm \mathrm{Ind}^{\mathfrak S_n}_{\mathfrak S_{\la}} M \subset K.\label{ind-M}
\end{equation}
The $\mathfrak S_n$-module $L_\mu$ appears in $N$ or $K$ only if $L_\mu \subset \mathrm{Ind}^{\mathfrak S_n}_{\mathfrak S_{\la}} \mathsf{triv}$. Applying the Littlewood-Richardson rule to the middle term of (\ref{ind-M}), we deduce $M \cong \mathsf{triv}$. In particular, we have $[N :L_{\la} ]_{\mathfrak S_n} > 0 < [K : L_{(n)}]_{\mathfrak S_n}$. By the semi-continuity of the specializations, we deduce
\begin{equation}
[\C_0 \otimes_{B(\la)^{\mathfrak S ( \la )}} \widetilde{N} : L_{\la}] > 0, \hskip 3mm \text{and} \hskip 3mm [\C_0 \otimes_{B(\la)^{\mathfrak S ( \la )}} \widetilde{K}_\la':L_{(n)}] > 0.\label{mult-pos}
\end{equation}
From this, we conclude $\C_0 \otimes_{B(\la)^{\mathfrak S ( \la )}} \widetilde{K}_\la' \cong K_\la$. Thus, the torsion free $B ( \la )^{\mathfrak S ( \la )}$-action on $\widetilde{K}_\la' \subset \widetilde{K}_\la^+$ yields the assertion.
\end{proof}
\begin{cor}
Keep the setting of Proposition \ref{selfex}. We have $\Psi ( [\widetilde{K}'_{\la}] ) = Q^{\vee}_{\la}$.
\end{cor}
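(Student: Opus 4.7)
The plan is to read off the class $[\widetilde{K}'_\la]$ in the Grothendieck group $[\cA]$ from the data already assembled, then apply $\Psi$ factor by factor. By Proposition \ref{selfex}, the module $\widetilde{K}'_\la$ carries a decreasing separable filtration whose associated graded is a direct sum of grading shifts of $K_\la$, and inspecting the proof of Proposition \ref{selfex} shows that the grading shifts occurring are precisely those recorded by $\mathrm{end}_{A_n}(\widetilde{K}'_\la) = B(\la)^{\mathfrak S(\la)}$, since the specialization $\C_0 \otimes_{B(\la)^{\mathfrak S(\la)}} \widetilde{K}'_\la \cong K_\la$ identifies one copy of $K_\la$ with each graded dimension of the endomorphism ring.

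Concretely, I would argue that in $[\cA]$ we have the identity
\[
[\widetilde{K}'_\la] = \mathrm{gdim}\, \mathrm{end}_{A_n}(\widetilde{K}'_\la) \cdot [K_\la] = b_\la^{-1}\cdot [K_\la],
\]
using Proposition \ref{ext-est} for the graded dimension. Since $\Psi$ is $\Q(\!(q)\!)$-linear when we extend scalars, applying it yields
\[
\Psi([\widetilde{K}'_\la]) = b_\la^{-1} \Psi([K_\la]) = b_\la^{-1}\, Q_\la = Q^\vee_\la,
\]
where the second equality is (\ref{PsiK}) and the third is (\ref{QtoQv}).

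The only delicate point is justifying the first display: one must check that the filtration of Proposition \ref{selfex} indeed produces exactly $\mathrm{gdim}\,B(\la)^{\mathfrak S(\la)} = b_\la^{-1}$ copies of $K_\la$ counted with grading shifts, rather than some smaller number. This follows because $\widetilde{K}'_\la$ is free over $\mathrm{end}_{A_n}(\widetilde{K}'_\la) = B(\la)^{\mathfrak S(\la)}$ (the $B(\la)^{\mathfrak S(\la)}$-action on $\widetilde{K}'_\la \subset \widetilde{K}^+_\la$ is torsion free by construction, since $\widetilde{K}^+_\la$ is itself free over its $X$-polynomial piece), and taking a homogeneous $\C$-basis of $B(\la)^{\mathfrak S(\la)}$ yields a filtration with successive quotients isomorphic to grading shifts of $\C_0 \otimes_{B(\la)^{\mathfrak S(\la)}} \widetilde{K}'_\la \cong K_\la$, in bijection with the basis. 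Once this bookkeeping is in place, the rest is a one-line symbolic manipulation.
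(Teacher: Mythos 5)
Your argument is correct and matches the paper's intended one-line proof, which is exactly to combine Proposition \ref{ext-est}, Proposition \ref{selfex}, (\ref{PsiK}), and (\ref{QtoQv}) in the way you describe. One remark on your ``delicate point'': the inference that torsion-free over the polynomial ring $B(\la)^{\mathfrak S(\la)}$ implies free is not automatic (a submodule of a free module over a polynomial ring in several variables need not be free), and in any case it is not needed to pin down the multiplicity $g(q)$ in $[\widetilde{K}'_\la]=g(q)\,[K_\la]$. Since $[K_\la:L_\la]_q=1$ by Lemma \ref{Kmult}, the filtration of Proposition \ref{selfex} already forces $g(q)=[\widetilde{K}'_\la:L_\la]_q=\mathrm{gdim}\,\mathrm{hom}_{\mathfrak S_n}(L_\la,\widetilde{K}'_\la)=\mathrm{gdim}\,\mathrm{end}_{A_n}(\widetilde{K}'_\la)=b_\la^{-1}$ by Proposition \ref{ext-est}, after which your one-line symbolic manipulation finishes the proof.
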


\begin{proof}
Compare Propositions \ref{ext-est} and \ref{selfex} with (\ref{QtoQv}) and (\ref{PsiK}).
\end{proof}

\begin{cor}\label{quotineq}
For each $\la \in \cP_n$ and an $A_n$-module proper quotient $M_\la$ of $\widetilde{K}_\la'$, we have $[L_\la: M_\la ]_q \ll b_\la^{-1}$.
\end{cor}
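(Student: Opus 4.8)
The strategy is to pit the \emph{central} action of $Z'(\la)$ against Theorem~\ref{gineq}: I claim that a proper $A_n$-quotient of $\widetilde{K}_\la'$ is automatically torsion over $Z'(\la)$, because $\widetilde{K}_\la'$ becomes a \emph{simple} $A_n$-module over the generic point of $\Spec Z'(\la)$. First I reduce to a statement about $\mathrm{gdim}$: by definition $[L_\la:M_\la]_q=\sum_i q^i\dim\mathrm{Hom}_{\mathfrak S_n}(L_\la,(M_\la)_i)\le\mathrm{gdim}\,M_\la$ coefficientwise, so it is enough to prove $\mathrm{gdim}\,M_\la\ll b_\la^{-1}$. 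Next I collect the ring-theoretic input: $Z'(\la)$ is, by its construction, a graded subalgebra of the polynomial ring $B(\la)^{\mathfrak S(\la)}=\mathrm{end}_{A_n}(\widetilde{K}_\la')$, hence a graded integral domain with $(Z'(\la))_0=\C$ and, by Proposition~\ref{ext-est}, $\mathrm{gdim}\,Z'(\la)\le\mathrm{gdim}\,B(\la)^{\mathfrak S(\la)}=b_\la^{-1}$; and $\widetilde{K}_\la'$ is a finitely generated $Z'(\la)$-module, being a quotient of $P_\la$, which is torsion-free over $Z'(\la)$, since it embeds into $\widetilde{K}_\la^+$ and the latter is free over the polynomial ring $B(\la)$ (induced over $A_\la$ from a free $B(\la)$-module built out of the $\widetilde{K}_{(\la_j)}=L_{(\la_j)}\otimes\C[Y_j]$).

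The technical core is the claim that $F:=\mathrm{Frac}(Z'(\la))\otimes_{Z'(\la)}\widetilde{K}_\la'$ is simple as a module over $A_n\otimes_\C\mathrm{Frac}(Z'(\la))$. By Lemma~\ref{K'-eig} the operators $X_1,\dots,X_n$ act on a general closed fibre of $\widetilde{K}_\la'$ with joint eigenvalues in the configuration (\ref{a-config}) with $\al_1,\dots,\al_{\ell(\la)}$ pairwise distinct; the Littlewood--Richardson computation from the proof of Proposition~\ref{selfex}, together with $\dim K_\la=n!/\prod_j\la_j!$ (Theorem~\ref{GPprop}), identifies this fibre with $\mathrm{Ind}^{\mathfrak S_n}_{\mathfrak S_\la}\mathsf{triv}$ carrying those eigenvalues. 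Over a splitting field such an induced module has a basis of simultaneous $X$-eigenvectors with pairwise distinct eigenvalue systems, permuted transitively by $\mathfrak S_n$, hence is $A_n$-simple; since simplicity of a module descends from a field extension to the base, $F$ is simple.

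Finally set $N:=\ker(\widetilde{K}_\la'\twoheadrightarrow M_\la)\ne\{0\}$. As $Z'(\la)$ acts through the centre of $A_n$, $N$ is a $Z'(\la)$-submodule, and since $\widetilde{K}_\la'$ is torsion-free over the domain $Z'(\la)$ with $N\ne\{0\}$, the module $\mathrm{Frac}(Z'(\la))\otimes_{Z'(\la)}N$ is a nonzero $A_n\otimes\mathrm{Frac}(Z'(\la))$-submodule of $F$, hence all of $F$. Therefore $\mathrm{Frac}(Z'(\la))\otimes_{Z'(\la)}M_\la=0$, i.e. $M_\la$ is a finitely generated torsion $Z'(\la)$-module, so a finitely generated module over the proper graded quotient $Z'(\la)/\mathrm{Ann}_{Z'(\la)}(M_\la)$; Theorem~\ref{gineq} then gives $\mathrm{gdim}\,M_\la\ll\mathrm{gdim}\,Z'(\la)\le b_\la^{-1}$, whence $[L_\la:M_\la]_q\ll b_\la^{-1}$. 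I expect the simplicity statement of the second paragraph to be the real obstacle: it is exactly where the explicit eigenvalue description of Lemma~\ref{K'-eig} and the branching analysis of Proposition~\ref{selfex} cannot be bypassed, for without irreducibility of the generic fibre one only learns that $M_\la$ is supported on \emph{some}, rather than a \emph{proper}, closed subscheme of $\Spec Z'(\la)$, and Theorem~\ref{gineq} then says nothing. (In the degenerate case $\ell(\la)=1$ the module $\widetilde{K}_\la'=\widetilde{K}_{(n)}$ is the one-variable object of Lemma~\ref{Ktriv} and should be inspected directly.)
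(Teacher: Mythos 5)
Your argument takes a genuinely different route from the paper's. The paper works entirely inside $\widetilde{K}_\la^+$: starting from the observation that a proper quotient must kill some $L_{(n)}\langle m\rangle$ (because $\mathsf{soc}\,K_\la = L_{(n)}$ and $\widetilde{K}_\la'$ is $K_\la$-filtered), it produces integers $m,m'$ with $\widetilde{N}\langle m\rangle\subset \widetilde{K}_\la'$ and $\widetilde{K}_\la'\langle m'\rangle\subset\widetilde{N}$ via the $B(\la)$-action, and concludes $\widetilde{K}_\la'\langle m+m'\rangle\subset\ker$, whence $[L_\la:M_\la]_q\le(1-q^{m+m'})b_\la^{-1}\ll b_\la^{-1}$ by direct coefficient inspection. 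This is elementary and requires no localization or generic-fibre analysis. Your route instead uses the central action, the generic fibre, and Theorem \ref{gineq}, which is conceptually appealing (it explains \emph{why} a proper quotient must be ``small'': it is torsion over the polynomial subring).

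There is, however, a real gap at the advertised ``technical core''. For the localization step to yield torsion of $M_\la$ you need $F=\mathrm{Frac}(Z'(\la))\otimes_{Z'(\la)}\widetilde{K}_\la'$ to be \emph{simple}, not merely to decompose into copies of a simple module; and your justification identifies $F$ with a single copy of $\mathrm{Ind}^{\mathfrak S_n}_{\mathfrak S_\la}\mathsf{triv}$ by comparing with $\dim K_\la=n!/\prod_j\la_j!$. But $\dim K_\la$ is the dimension of the fibre of $\widetilde{K}_\la'$ at the graded origin over $B(\la)^{\mathfrak S(\la)}$; to transfer this to the \emph{generic} $Z'(\la)$-fibre you need two facts that the paper does not establish: (i) $\widetilde{K}_\la'$ is \emph{free} over $B(\la)^{\mathfrak S(\la)}$ (torsion-free plus the $\mathrm{gdim}$ comparison gives this, but it needs saying), and (ii) $\mathrm{Frac}(Z'(\la))=\mathrm{Frac}(B(\la)^{\mathfrak S(\la)})$ --- a priori $Z'(\la)$ could be a proper graded subring (it often is, e.g.\ for $\la=(2,1)$), and if the fraction-field extension were nontrivial the $Z'(\la)$-rank of $\widetilde{K}_\la'$ would exceed $\dim K_\la$, the generic fibre would be a proper \emph{direct sum} of copies of the induced module, and your final step (``a nonzero submodule of $F$ is all of $F$'') would fail. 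Both (i) and (ii) are true, (ii) by observing that $\Spec B(\la)\to\A^n/\mathfrak S_n$ is generically injective modulo $\mathfrak S(\la)$ on the distinct-eigenvalue locus, but neither is free, and as stated your proof treats them as granted. The moral is that your approach buys a conceptual explanation at the cost of commutative-algebra input the paper avoids; the paper's explicit-containment proof is shorter and needs none of it.
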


\begin{proof}
We borrow the setting of the proof of Proposition \ref{selfex}. Since $\mathsf{soc} \, K_{\la} = L_{(n)}$, we find $L_{(n)} \left< m \right> \subset \ker \, ( \widetilde{K}'_\la \to M_\la )$ for some $m \in \Z_{> 0}$. As all the copies of $L_{(n)}$ and $L_{\la}$ in $\widetilde{K}^+_\la$ are obtained by the $B (\la)$-action from unique copies at degree zero, we find $m' \in \Z_{> 0}$ such that
$$\widetilde{N} \left< m \right> \subset \widetilde{K}_\la' \hskip 3mm \text{and} \hskip 3mm \widetilde{K}_\la' \left< m' \right>\subset \widetilde{N} \hskip 3mm \text{inside $\widetilde{K}^+_\la$ as $A_n$-modules.}$$
 This forces $\widetilde{K}_\la' \left< m' + m \right> \subset \widetilde{N} \subset \widetilde{K}_\la'$ to be zero in $M_\la$. Therefore, we have
$$\mathrm{gdim} \, \mathrm{hom}_{\mathfrak S_n} ( L_{\la},  M_\la ) \le ( 1 - q^{m + m'}) \mathrm{gdim} \, \mathrm{hom}_{\mathfrak S_n} ( L_{\la},  \widetilde{K}_\la ) = ( 1 - q^{m + m'}) b_\la^{-1}.$$
This implies the assertion.
\end{proof}

\begin{cor}\label{surjmult}
Let $\la \in \cP_n$. Assume that $M$ is a graded $A_n$-module generated by the subspace
$$
  M^{\mathrm{top}} \cong \bigoplus_{j=1}^m L_\la \left< d_j \right> \subset M \hskip 5mm \text{such that} \hskip 5mm [M : L_{\mu} ]_q = \begin{cases} b_\la^{-1} \sum_{j=1}^m q^{d_j} & (\mu = \la)\\ 0 &(\mu \not\ge \la)\end{cases}.
$$
Then, we have $M \cong \bigoplus_{j=1}^m \widetilde{K}'_\la \left< d_j \right>$.
\end{cor}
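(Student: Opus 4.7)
The strategy proceeds in three steps: (1) construct a surjection $\pi \colon Q := \bigoplus_{j=1}^m \widetilde{K}_\la\langle d_j\rangle \twoheadrightarrow M$; (2) factor $\pi$ through the canonical quotient $\sigma \colon Q \twoheadrightarrow N := \bigoplus_{j=1}^m \widetilde{K}'_\la\langle d_j\rangle$; (3) identify the resulting surjection $\rho \colon N \twoheadrightarrow M$ as an isomorphism.

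For step (1), I would lift each generator $v_j \in L_\la\langle d_j\rangle \subset M$ via the projectivity of $P_\la$ (Proposition \ref{PLcov}) to a graded map $P_\la\langle d_j\rangle \to M$. The hypothesis $[M:L_\mu]_q = 0$ for $\mu \not\ge \la$ yields $\mathrm{hom}_{A_n}(P_\mu, M) \cong \mathrm{hom}_{\mathfrak S_n}(L_\mu, M) = 0$, so by the defining property of $\widetilde{K}_\la$ each lift factors through $\widetilde{K}_\la\langle d_j\rangle$. Assembling these and using that $M$ is generated by $M^{\mathrm{top}}$ produces the surjection $\pi$.

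Step (2) is the main obstacle. Setting $\widetilde M_j := \pi(\widetilde{K}_\la\langle d_j\rangle) \subset M$, I need each $\widetilde M_j$ to be a quotient of $\widetilde{K}'_\la\langle d_j\rangle$; a priori it is only known to be a quotient of $\widetilde{K}_\la\langle d_j\rangle$, which by Theorem \ref{support} has $[\widetilde{K}_\la : L_\la]_q - b_\la^{-1} \ll b_\la^{-1}$. My plan is to exploit the tight equality $[M:L_\la]_q = b_\la^{-1}\sum_j q^{d_j}$: writing $\sum_j [\widetilde M_j : L_\la]_q \ge [M:L_\la]_q$ from $M = \sum_j \widetilde M_j$, and pairing this with Corollary \ref{quotineq}---which forbids any proper quotient of $\widetilde{K}'_\la\langle d_j\rangle$ from having $L_\la$-multiplicity comparable to $b_\la^{-1}q^{d_j}$---the budget leaves no room for $\widetilde M_j$ to sit strictly between $\widetilde{K}'_\la\langle d_j\rangle$ and $\widetilde{K}_\la\langle d_j\rangle$, collapsing each image to (a shift of) $\widetilde{K}'_\la$ and furnishing the desired factorization $\rho \colon N \twoheadrightarrow M$.

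For step (3), the equality $[N : L_\la]_q = [M : L_\la]_q$ gives $[\ker \rho : L_\la]_q = 0$. A key consequence of Corollary \ref{quotineq}: any nonzero $A_n$-submodule $S \subset \widetilde{K}'_\la\langle d_j\rangle$ satisfies $[S : L_\la]_q \ne 0$, since otherwise the proper quotient $\widetilde{K}'_\la\langle d_j\rangle / S$ would retain the full $L_\la$-multiplicity $b_\la^{-1}q^{d_j}$, contradicting the $\ll$-bound. Projecting $\ker \rho$ to each direct summand of $N$ therefore yields zero, and since the intersection of the projection kernels is trivial, $\ker \rho = 0$ and $\rho$ is the required isomorphism $M \cong \bigoplus_{j=1}^m \widetilde{K}'_\la\langle d_j\rangle$.
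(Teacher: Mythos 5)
Your steps (1) and (3) are sound, and your step (3) observation---that a nonzero submodule $S \subset \widetilde{K}'_\la\langle d_j\rangle$ must satisfy $[S:L_\la]_q \neq 0$, because otherwise the proper quotient by $S$ would retain the full $L_\la$-multiplicity $b_\la^{-1}q^{d_j}$, contradicting Corollary \ref{quotineq}---is a clean way to extract injectivity once a surjection from $N := \bigoplus_j \widetilde{K}'_\la\langle d_j\rangle$ onto $M$ is in hand. The gap is entirely in step (2), and it is genuine.

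The budget argument as written does not produce the factorization. You want to conclude that each $\widetilde M_j := \pi(\widetilde{K}_\la\langle d_j\rangle)$ is a quotient of $\widetilde{K}'_\la\langle d_j\rangle$, i.e.\ that $\pi$ kills $\ker(\widetilde{K}_\la\to\widetilde{K}'_\la)\langle d_j\rangle$. But Corollary \ref{quotineq} only constrains quotients of $\widetilde{K}'_\la$, not quotients of $\widetilde{K}_\la$, and a priori $\widetilde M_j$ is merely a quotient of the latter. Its $L_\la$-multiplicity is bounded only by $q^{d_j}[\widetilde{K}_\la:L_\la]_q = q^{d_j}\bigl(b_\la^{-1} + \varepsilon\bigr)$ where $\varepsilon \ll b_\la^{-1}$ (Theorem \ref{support}), which is \emph{strictly larger} than $q^{d_j}b_\la^{-1}$. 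The inequality $\sum_j [\widetilde M_j : L_\la]_q \ge [M:L_\la]_q = b_\la^{-1}\sum_j q^{d_j}$ can therefore be satisfied with some $\widetilde M_j$ not factoring through $\widetilde{K}'_\la\langle d_j\rangle$: the slack $\varepsilon\sum_j q^{d_j}$ absorbs the excess, and the overlaps $\widetilde M_i \cap \sum_{j<i}\widetilde M_j$ make the inequality potentially strict in the wrong direction as well. You invoke Corollary \ref{quotineq} as if you already knew the factorization, which is circular.

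The paper's proof sidesteps this by never attempting to show $\pi$ factors through $N$. Instead it first quotients $M$ to $M' := M \big/ \sum_j \pi\bigl(\ker(\widetilde{K}_\la\to\widetilde{K}'_\la)\langle d_j\rangle\bigr)$, which \emph{is} automatically a quotient of $N$, with $[M:L_\la]_q - [M':L_\la]_q \ll b_\la^{-1}$ by Theorem \ref{support}. It then chooses a maximal $S \subset \{1,\dots,m\}$ for which $\bigoplus_{j\in S}\widetilde{K}'_\la\langle d_j\rangle$ injects into $M'$, so that each remaining summand lands as a \emph{proper} quotient of $\widetilde{K}'_\la\langle d_{j}\rangle$ in $M'' := M'/\bigl(\text{image}\bigr)$; now Corollary \ref{quotineq} applies legitimately, giving $[M'':L_\la]_q \ll b_\la^{-1}$. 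Combined with the hypothesized exact value $[M:L_\la]_q = b_\la^{-1}\sum_j q^{d_j}$, this forces $S = \{1,\dots,m\}$ and finally $M = M'$. In short: you need some device to put the proper-quotient hypothesis of Corollary \ref{quotineq} in force before you may use it, and the maximal-injective-subset trick is what supplies it.
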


\begin{proof}
We have a surjection
$$f : \bigoplus_{j=1}^m \widetilde{K}_\la \left< d_j \right> \longrightarrow \!\!\!\!\! \rightarrow M.$$
Consider the quotient $M'$ of $M$ by $\sum_{j=1}^m f ( \ker ( \widetilde{K}_\la \to \widetilde{K}_\la' ) \left< d_j \right>)$. Let $f' : \bigoplus_{j=1}^m \widetilde{K}_\la' \left< d_j \right> \to M'$ be the map induced from $f$. Let us choose a maximal subset $S \subset \{1,\ldots, m\}$ such that $\bigoplus_{j \in S} \widetilde{K}_\la' \left< d_j \right>$ injects into $M'$ by $f'$. We take the quotient  $M''$ of $M'$ by this image. Then, the image $K_j$ of $\widetilde{K}_\la' \left< d_j \right>$ ($j \not\in S$) in $M''$ under the induced map must be a proper quotient of $\widetilde{K}_\la' \left< d_j \right>$.

Suppose that $S \neq \{1,\ldots, m\}$. Corollary \ref{quotineq} and Theorem \ref{gineq} forces
  $$[M'':L_\la]_q \le \sum_{j \not\in S} [K_j:L_\la]_q \ll [ \widetilde{K}'_\la : L_\la ]_q.$$
  By Theorem \ref{support}, we have $[M : L_\la]_q - [ M' : L_\la]_q \ll b_\la^{-1}$.  Thus, we have
  $$[M:L_\la]_q - \sum_{j \in S} q^j [\widetilde{K}'_\la:L_\la]_q \ll \sum_{j \not\in S}^m q^{d_j} [\widetilde{K}'_\la:L_\la]_q,$$
  that is a contradiction by $[\widetilde{K}'_\la:L_\la]_q = b_\la^{-1}$. Therefore, we have $S = \{1,2,\ldots,m\}$. This implies that $f'$ is an isomorphism. By Propositions \ref{selfex} and \ref{char-K}, we conclude $M = M'$ by the comparison of graded multiplicities.
\end{proof}

\subsection{Proof of Theorem \ref{main}}\label{mainproof}
We prove Theorem \ref{main} and $\widetilde{K}_\la = \widetilde{K}_\la'$ ($\la \in \cP_n$) by induction on $n$. Theorem \ref{main} holds for $n = 1$ as $\cP_1 = \{ ( 1 )\}$, $P_{(1)} = \widetilde{K}_{(1)} =  \widetilde{K}_{(1)}' = \C [X]$, $K_{(1)} = \C$, and
$$\mathrm{ext}^{k}_{\C [X]}( \C [X], \C ) \cong \C^{\delta_{k,0}}.$$

We assume the assertion for all $1 \le n < n_0$ and prove the assertion for $n = n_0$. We fix $\la \in \cP_{n_0-1}$ and set
$$\mathrm{ind} ( \la ) := \mathrm{ind}_{1,n_0-1} ( \C [X] \boxtimes \widetilde{K}_\la ).$$
For each $\mu \in \cP_{n_0}$ and $k \in \Z$, Theorem \ref{fnr} implies
\begin{equation}
\mathrm{ext}^{k}_{A_{n_0}} ( \mathrm{ind} ( \la ), K_\mu^* ) \cong \mathrm{ext}^{k}_{A_{1,n_0-1}} ( \C [X] \boxtimes \widetilde{K}_\la, K_\mu^* ).\label{FR-main}
\end{equation}
Since $\C [X]$ is projective as $\C [X]$-modules, Theorem \ref{GP} implies that
\begin{equation}
\mathrm{gdim} \, \mathrm{ext}^{k}_{A_{1,n_0-1}} ( \C [X] \boxtimes \widetilde{K}_\la, K_\mu^* ) \cong \begin{cases} \sum_{1 \le j \le \ell ( \mu ), \la = \mu_{(j)}} q^{n ( \mu ) - n ( \mu _{(j)} ) + j} & (k=0)\\ 0 & (k \neq 0)\end{cases}\label{extcalc-main}
\end{equation}
by the short exact sequences associated to (\ref{GPfilt}). In other word, we have
$$\mathrm{gdim} \, \mathrm{hom}_{A_{1,n_0-1}} ( \C [X] \boxtimes \widetilde{K}_\la, K_\mu^* ) = q^{\star} [m_{j} ( \mu )]_q.$$
and it is nonzero if and only if $\mu_{(j)} = \la$ for some $1 \le j \le \ell ( \mu )$. This is equivalent to $\la^{(j)} = \mu$ for some $1 \le j \le \ell ( \la ) + 1$. We set $S := \{\la ^{(j)}\}_{j = 1}^{\ell ( \la )+1} \subset \mathcal P_{n_0}$.

Note that $L_\mu =\mathsf{soc}\,K_{\mu}^*$, and hence every $0 \neq f \in \mathrm{hom}_{A_{n_0}} ( \mathrm{ind} ( \la ), K_\mu^* )$ satisfies $[\mathrm{Im}\,f : L_{\mu}]_q \neq 0$. In view of Lemma \ref{Kmult}, we further deduce $[\mathrm{Im}\,f : L_{\mu}] = 1$. Therefore, the image of the map
$$f^+ : \mathrm{ind} ( \la ) \longrightarrow \left( K_{\mu}^* \right)^{\oplus \star}$$
obtained by taking the sum of all the maps of $\mathrm{hom}_{A_{n_0}} ( \mathrm{ind} ( \la ), K_\mu^* )$ satisfies
\begin{itemize}
\item $\mathsf{soc}\, \mathrm{Im} \, f^+$ is the direct sum of $L_{\mu} \left< m \right>$ ($m \in \Z$);
\item $\dim\, ( \mathsf{soc}\, \mathrm{Im} \, f^+ ) = ( \dim \, L_{\mu} ) \cdot ( \dim \, \mathrm{hom}_{A_{n_0}} ( \mathrm{ind} ( \la ), K_\mu^* ) )$. 
\end{itemize}

We consider an $A_{n_0}$-submodule generated by the preimage of $( \mathsf{soc}\, \mathrm{Im} \, f^+ )$ (considered as the direct sum of grading shifts of $L_{\mu}$), that we denote by $N_{\mu}$. Although the module $N_{\mu}$ might depend on the choice of a lift, the number of its $A_{n_0}$-module generators is unambiguously determined.

We have $\la^{(j)} \ge \la^{(j+1)}$ for $1 \le j \le \ell ( \la )$ by inspection. In particular, $S$ is a totally ordered set with respect to $\le$. Moreover, $\mathrm{ind} ( \la )$ is generated by $\mathrm{Ind}_{1,n_0-1} L _\la$ as an $A_{n_0}$-module, and an irreducible constituent of $\mathrm{Ind}_{1,n_0-1} L _\la$ is of the form $L_{\la^{(j)}}$ for $1 \le j \le ( \ell ( \la ) + 1 )$ by the Littlewood-Richardson rule. As a consequence, we find that $\sum_{\gamma \in S} N_{\gamma} = \mathrm{ind} ( \la )$. For each $1 \le j \le \ell ( \la ) + 1$, we set $N ( j ) := \sum_{i \ge j} N_{\la^{(i)}}$. We have $N ( j + 1 ) \subset N ( j )$ for $1 \le j \le \ell ( \la )$ and $N ( 1 ) = \mathrm{ind} ( \la )$. 

By the Littlewood-Richardson rule and Lemma \ref{Kmult}, we find that
\begin{equation}
[ \mathrm{ind} ( \la ) : L_{\gamma}]_q \neq 0\hskip 5mm \text{only if} \hskip 5mm \gamma \ge \la^{(\ell ( \la ) + 1 )}.\label{LRest1}
\end{equation}

\begin{claim}\label{Njmult}
We have $[N ( j ) / N ( j + 1 ) : L_{\gamma}]_q = 0$ for $\gamma < \la^{(j)}$.
\end{claim}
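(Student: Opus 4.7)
My plan is to prove Claim \ref{Njmult} by downward induction on $j$, from $j = \ell(\la)+1$ down to $j = 1$. The base case $j = \ell(\la)+1$ follows directly from (\ref{LRest1}): since $N(j+1) = \{0\}$ by convention and $N(j)/N(j+1) = N_{\la^{(\ell(\la)+1)}} \subseteq \mathrm{ind}(\la)$, and $\mathrm{ind}(\la)$ has no composition factor $L_\gamma$ with $\gamma < \la^{(\ell(\la)+1)}$, the claim holds trivially at this step.

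For the inductive step, assume the claim for $j' > j$. The key observation is that $K_{\la^{(j)}}^*$ has composition factors $L_\nu$ only for $\nu \ge \la^{(j)}$, by Lemma \ref{Kmult} together with $L_\nu^* \cong L_\nu$. Therefore the universal map $f^+ \colon \mathrm{ind}(\la) \to (K_{\la^{(j)}}^*)^{\oplus \star}$ has image with composition factors only of the form $L_\nu$ with $\nu \ge \la^{(j)}$, and consequently $\ker f^+$ contains every $L_\gamma$-composition factor of $\mathrm{ind}(\la)$ with $\gamma \not\ge \la^{(j)}$. Combined with the identification $N_{\la^{(j)}} = (f^+)^{-1}(\sigma)$, where $\sigma \cong L_{\la^{(j)}}^{\oplus s_j}$ is the socle of $\mathrm{Im}\,f^+$, we obtain the short exact sequence $0 \to \ker f^+ \to N_{\la^{(j)}} \to \sigma \to 0$. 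Passing to the quotient modulo $N(j+1)$, the right-end contributes only $L_{\la^{(j)}}$-composition factors, so it suffices to show $[\ker f^+ / (\ker f^+ \cap N(j+1)) : L_\gamma]_q = 0$ for $\gamma < \la^{(j)}$.

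I would handle this remaining step by invoking the inductive hypothesis: for any composition factor $L_\gamma$ of $\ker f^+$ with $\gamma < \la^{(j)}$, there is a unique maximal $i > j$ with $\la^{(i)} \le \gamma$ (since $\la^{(1)} \ge \la^{(2)} \ge \cdots \ge \la^{(\ell(\la)+1)}$ is a totally ordered chain), and I would argue that the $L_\gamma$-contribution of $\mathrm{ind}(\la)$ is already absorbed by $N_{\la^{(i)}} \subseteq N(j+1)$. This would rely on a Frobenius-reciprocity argument for the analogous universal map to $(K_{\la^{(i)}}^*)^{\oplus\star}$ (Theorem \ref{fnr}), together with the graded multiplicity formula (\ref{extcalc-main}), and the Hall-Littlewood identification via (\ref{PsiK}), Proposition \ref{orth-id}, and Proposition \ref{idform}, which together pin down the total contribution of each $L_\gamma$ across the filtration.

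The main obstacle is this last flow-tracking step: rigorously verifying that each $L_\gamma$-composition factor of $\mathrm{ind}(\la)$ is routed into the correct stratum $N(i)/N(i+1)$ rather than surviving in some earlier subquotient, and performing the total-multiplicity bookkeeping needed to rule out any $L_\gamma$ with $\gamma < \la^{(j)}$ in $N(j)/N(j+1)$. The inductive hypothesis is not directly sufficient by itself, since it only constrains strata of index strictly greater than $j$; the decisive ingredient is that the numerical multiplicities forced by the Hall-Littlewood orthogonality (Proposition \ref{orth-id}) exactly saturate the slots available in $N(j+1)$ for $L_\gamma$ with $\gamma < \la^{(j)}$, leaving no residue for $N(j)/N(j+1)$.
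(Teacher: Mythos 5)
Your proposal does not close the gap, and one of its central identifications is incorrect.

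First, the proposed short exact sequence $0 \to \ker f^+ \to N_{\la^{(j)}} \to \sigma \to 0$ rests on the claim $N_{\la^{(j)}} = (f^+)^{-1}(\sigma)$. This is not how $N_{\mu}$ is defined: it is the $A_{n_0}$-submodule generated by a \emph{chosen lift} of $\mathsf{soc}\,\mathrm{Im}\,f^+$ to $\mathrm{ind}(\la)$, not the full preimage. The paper even flags this dependence explicitly ("the module $N_\mu$ might depend on the choice of a lift"). The full preimage $(f^+)^{-1}(\sigma)$ contains the entire $\ker f^+$, while $N_{\la^{(j)}}$ typically does not, so your sequence is wrong, and the reduction to the module $\ker f^+ / (\ker f^+ \cap N(j+1))$ does not follow. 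Passing to $N(j)/N(j+1) \cong N_{\la^{(j)}}/(N_{\la^{(j)}} \cap N(j+1))$ moreover does not interact cleanly with the fibering over $\sigma$, since $N(j+1)$ need not intersect $\ker(f^+|_{N_{\la^{(j)}}})$ in the way your argument requires.

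Second, you correctly identify the remaining "flow-tracking" step as the crux and concede it is not done. The appeal to Hall--Littlewood orthogonality via (\ref{PsiK}), Proposition \ref{orth-id}, and Proposition \ref{idform} is problematic: these give an identity for the total class $[\mathrm{ind}(\la)]$ (as in (\ref{indchar}) of the paper), i.e.\ a character statement, whereas Claim \ref{Njmult} is a structural statement about how composition factors are distributed along the specific filtration $N(1) \supset N(2) \supset \cdots$. The class of $\mathrm{ind}(\la)$ alone cannot distinguish between a filtration where $L_\gamma$ lands in $N(j)/N(j+1)$ and one where it lands in $N(j+1)$. Indeed the passage from the character identity to the filtration structure is precisely what Claim \ref{direct} accomplishes \emph{using} Claim \ref{Njmult}, so deducing Claim \ref{Njmult} from the numerology alone risks circularity. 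The paper avoids this entirely: it argues by contradiction, taking $j$ minimal with a bad $\gamma$ occurring at minimal degree $m$, then uses $\mathrm{ext}^1$-vanishing supplied by Proposition \ref{char-K}, Theorem \ref{ad}, and the minimality constraint (\ref{Nmult}) to extend a nonzero map $L_\gamma \langle m \rangle \to K_\gamma^*\langle m \rangle$ across $N(j)/N(j+1)$, then across $N(j)$, then across $\mathrm{ind}(\la)$. The resulting homomorphism $\mathrm{ind}(\la) \to K_\gamma^*\langle m\rangle$ annihilates $N(j+1) \supseteq N_\gamma$, which contradicts how $N_\gamma$ was defined through the socle of the image of the universal map to $K_\gamma^*$. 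This homological construction, not a multiplicity count, is the missing idea.
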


\begin{proof}
Assume to the contrary to deduce contradiction. We have some $1 \le j \le \ell ( \la )$ and $\gamma < \la^{(j)}$ such that $[N ( j ) / N ( j + 1 ) : L_{\gamma}]_q \neq 0$. Here we have $\la^{(\ell ( \la ) + 1 )} \le \gamma < \la^{(j)}$ by (\ref{LRest1}). By rearranging $j$, we assume that $j$ is the minimal number with this property. In particular, we have
\begin{equation}
[N(l) / N(l+1) : L_{\gamma}]_q = 0 \hskip 5mm \gamma < \la^{(l)} \hskip 3mm \text{for} \hskip 3mm l < j.\label{Nmult}
\end{equation}
This in turn implies that $[N ( l ) / N ( j ) : L _{\gamma} ]_q = 0$ for $\gamma < \la^{(j)}$ for every $l \le j$. By rearranging $\gamma$ if necessary, we can assume that the $A_{n_0}$-submodule $N^- ( j ) \subset N ( j ) / N ( j + 1 )$ generated by $\mathfrak S_{n_0}$-isotypic components $L _{\kappa}$ such that $\kappa < \la^{(j)}$ satisfies $L _{\gamma} \left< m \right> \subset \mathsf{hd} \, N^- ( j )$ and the value $m$ is minimum among all $\gamma < \la^{(j)}$. Then, the lift of  $L _{\gamma} \left< m \right> \subset \mathsf{hd} \, N^- ( j )$ to $N^- ( j )$ is uniquely determined as graded $\mathfrak S_{n_0}$-module. It follows that the maximal quotient $L_{\gamma}^+$ of $N ( j ) / N ( j + 1 )$ (and hence also a quotient of $N ( j )$) such that $\mathsf{soc} \, L_{\gamma}^+ = L_{\gamma} \left< m \right>$ is finite-dimensional (as the grading must be bounded) and $[L_{\gamma}^+ : L_{\kappa}]_q = 0$ if $\kappa < \gamma (< \la^{(j)})$. By Proposition \ref{char-K} and Theorem \ref{ad}, we find
$$\mathrm{ext}^1_{A_{n_0}} ( \mathrm{coker} \, ( L_\gamma \to L_{\gamma}^+ ), K_{\gamma}^* ) = 0$$
by a repeated applications of the short exact sequences. In particular, the non-zero map $L_{\gamma} \left< m \right> \to K_{\gamma}^* \left< m \right>$ prolongs to $L_{\gamma}^+$, and hence it gives rise to a map $N ( j ) \rightarrow K_{\gamma}^* \left< m \right>$. By (\ref{Nmult}), we additionally have
$$\mathrm{ext}^1_{A_{n_0}} ( \mathrm{ind} ( \la ) / N(j) , K_{\gamma}^* ) = 0.$$
Therefore, we deduce a non-zero map $\mathrm{ind} ( \la ) \rightarrow K_{\gamma}^* \left< m \right>$ from our assumption that does not come from the generator set of $N_{\la^{(l)}}$ for every $l$. This is a contradiction, and hence we conclude the result.
\end{proof}

We return to the proof of Theorem \ref{main}. Note that Claim \ref{Njmult} guarantees that $N ( j )$ ($1 \le j \le \ell (\la + 1)$) is defined unambiguously as all the possible generating $\mathfrak S_{n_0}$-isotypical components of $N ( j )\subset\mathrm{ind} ( \la )$ (i.e. $L_{\la^{(k)}}$ for $j \le k \le \ell ( \la ) + 1$) must belong to $N ( j )$. In view of the above and Corollary \ref{idnum}, we deduce
\begin{align}\nonumber
\Psi ( [\mathrm{ind} ( \la )]) & = \sum_{\gamma \in \cP} Q^{\vee}_\gamma \cdot \left< [\mathrm{ind} ( \la )], [K_{\gamma}] \right>_{EP}\\\nonumber
& = \sum_{\gamma \in \cP, k \in \Z} (-1)^k Q^{\vee}_\gamma \cdot \mathrm{gdim} \, \mathrm{ext}_{A_{n_0}}^k ( \mathrm{ind} ( \la ), K_{\gamma}^* )^*\\
\nonumber
& = \sum_{\gamma \in S} Q^{\vee}_\gamma \cdot \mathrm{gdim} \, \mathrm{hom}_{A_{n_0}} ( \mathrm{ind} ( \la ), K_{\gamma}^* )^*\\
& = \sum_{\gamma \in S} b_\gamma^{-1} \cdot Q_\gamma \cdot \mathrm{gdim} \, \mathrm{hom}_{A_{n_0}} ( \mathrm{ind} ( \la ), K_{\gamma}^* )^* \in \La_q.\label{indchar}
\end{align}
This expansion exhibits positivity (as a formal power series in $\Q (\!(q)\!)$).

\begin{claim}\label{direct}
For each $1 \le j \le \ell ( \la )$, the module $N (j) / N (j+1)$ is the direct sum of grading shifts of $\widetilde{K}'_{\la^{(j)}}$.
\end{claim}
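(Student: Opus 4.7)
The plan is to prove the claim by downward induction on $j$ (from $j = \ell(\la) + 1$ down to $j = 1$), verifying at each step the hypotheses of Corollary \ref{surjmult} for $M := N(j)/N(j+1)$ with label $\la^{(j)}$. The inductive hypothesis, combined with Proposition \ref{selfex} and the identification $\Psi([\widetilde{K}'_\gamma]) = Q^{\vee}_\gamma$, fixes the character of $N(j+1)$ as $\sum_{\gamma \in S,\, \gamma < \la^{(j)}} \tilde{c}_\gamma(q)\, Q^{\vee}_\gamma$, where $\tilde{c}_\gamma(q) := \mathrm{gdim}\,\mathrm{hom}_{A_{n_0}}(\mathrm{ind}(\la), K_\gamma^*)^*$. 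If $\la^{(j)} = \la^{(i)}$ for some $i > j$, then $N_{\la^{(j)}} \subset N(j+1)$, forcing $N(j) = N(j+1)$, so the assertion is trivial. I may thus assume $\la^{(j)} > \la^{(i)}$ strictly (in dominance order) for every $i > j$.

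For the composition-factor hypothesis, subtracting $\Psi([N(j+1)])$ from (\ref{indchar}) gives
\[
\Psi([\mathrm{ind}(\la)/N(j+1)]) = \sum_{\gamma \in S,\, \gamma \ge \la^{(j)}} \tilde{c}_\gamma(q)\, Q^{\vee}_\gamma.
\]
Expanding each $Q^{\vee}_\gamma = b_\gamma^{-1}\sum_\mu [K_\gamma : L_\mu]_q\, S_\mu$ and invoking Lemma \ref{Kmult}, every composition factor $L_\mu$ of $\mathrm{ind}(\la)/N(j+1)$ satisfies $\mu \ge \gamma \ge \la^{(j)}$ for some such $\gamma$. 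Since $N(j)/N(j+1) \hookrightarrow \mathrm{ind}(\la)/N(j+1)$, this forces $[N(j)/N(j+1) : L_\mu]_q = 0$ for $\mu \not\ge \la^{(j)}$. The same computation, now using Lemma \ref{Kmult} with $\la^{(j)} \ge \gamma$ (hence $\gamma = \la^{(j)}$), yields $[\mathrm{ind}(\la)/N(j+1) : L_{\la^{(j)}}]_q = b_{\la^{(j)}}^{-1}\tilde{c}_{\la^{(j)}}(q)$.

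By construction, $N(j)/N(j+1)$ is generated by the images of the lifts of $\mathsf{soc}(\mathrm{Im}\,f^+_{\la^{(j)}})$, which are copies of $L_{\la^{(j)}}$ whose total graded count is precisely $\tilde{c}_{\la^{(j)}}(q)$ (after unwinding the grading on $K_{\la^{(j)}}^*$). It remains to verify that all $L_{\la^{(j)}}$-composition factors of $\mathrm{ind}(\la)/N(j+1)$ lie inside $N(j)/N(j+1)$, i.e., $[\mathrm{ind}(\la)/N(j) : L_{\la^{(j)}}]_q = 0$. Assuming to the contrary that $L_{\la^{(j)}}\langle m\rangle$ appears as a composition factor of $\mathrm{ind}(\la)/N(j)$, every other composition factor $L_\kappa$ satisfies $\kappa > \la^{(j)}$ by the previous paragraph, so Proposition \ref{char-K} yields $\mathrm{ext}^1(L_\kappa, K_{\la^{(j)}}^*) = 0$. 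Mimicking the construction in the proof of Claim \ref{Njmult}, I would build a finite-dimensional quotient $L_{\la^{(j)}}^+$ of $\mathrm{ind}(\la)/N(j)$ with socle $L_{\la^{(j)}}\langle m\rangle$ and extend the inclusion $L_{\la^{(j)}}\langle m\rangle \hookrightarrow K_{\la^{(j)}}^*\langle m\rangle$ to a map $L_{\la^{(j)}}^+ \to K_{\la^{(j)}}^*\langle m\rangle$. The composition $\mathrm{ind}(\la) \twoheadrightarrow L_{\la^{(j)}}^+ \to K_{\la^{(j)}}^*\langle m\rangle$ then vanishes on $N(j) \supset N_{\la^{(j)}}$; yet, as a component of $f^+_{\la^{(j)}}$, it must send the lift of the corresponding socle copy (which by definition generates part of $N_{\la^{(j)}}$) to a nonzero element — a contradiction.

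The main obstacle is this final extension-and-contradiction step: it is precisely what pins down the $L_{\la^{(j)}}$-multiplicity in $N(j)/N(j+1)$ at exactly $b_{\la^{(j)}}^{-1}\tilde{c}_{\la^{(j)}}(q)$, which in turn forces every generator of $N_{\la^{(j)}}$ to survive in $N(j)/N(j+1)$ and thereby lets Corollary \ref{surjmult} deliver $N(j)/N(j+1) \cong \bigoplus \widetilde{K}'_{\la^{(j)}}\langle d\rangle$ with shifts $d$ matching the degrees of the generators of $N_{\la^{(j)}}$.
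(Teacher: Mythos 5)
Your proof follows essentially the same route as the paper's: compute the graded composition multiplicities of $N(j)/N(j+1)$ from the expansion \eqref{indchar} together with the downward inductive hypothesis, and feed the result into Corollary~\ref{surjmult}. The one genuine difference is that the final ``extension-and-contradiction'' step — which you flag as the main obstacle — is redundant. The vanishing $[\mathrm{ind}(\la)/N(j) : L_{\la^{(j)}}]_q = 0$ is already an immediate consequence of Claim~\ref{Njmult}: for each $l < j$, either $\la^{(l)} = \la^{(j)}$, in which case $N_{\la^{(l)}} = N_{\la^{(j)}} \subset N(j) \subset N(l+1)$ forces $N(l) = N(l+1)$ so the $l$-th layer contributes nothing; or $\la^{(j)} < \la^{(l)}$, in which case Claim~\ref{Njmult} gives $[N(l)/N(l+1) : L_{\la^{(j)}}]_q = 0$ directly. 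Summing over $l < j$ gives the desired vanishing without any new $\mathrm{ext}^1$-manipulation. This is exactly what the paper's terse ``We apply Claim~\ref{Njmult}'' accomplishes, so re-running the machinery from Claim~\ref{Njmult}'s proof (building the quotient $L_{\la^{(j)}}^+$, extending into $K_{\la^{(j)}}^*$, etc.) only re-proves a special case of something already on the table. Apart from that, the remaining steps — the reduction to $\la^{(j)} > \la^{(j+1)}$, the identification of $\Psi([\mathrm{ind}(\la)/N(j+1)])$ with $\sum_{\gamma \ge \la^{(j)}}\tilde c_\gamma(q)\,Q^{\vee}_\gamma$ via the upper-triangularity of Lemma~\ref{Kmult}, and the invocation of Corollary~\ref{surjmult} — match the paper's argument.
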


\begin{proof}
We assume that the assertion holds for all the larger $j$ (or $j = \ell ( \la )+1$), and $\la^{(j)} \neq \la^{(j+1)}$ (and hence $\la^{(j)} > \la^{(j+1)}$). We apply Claim \ref{Njmult}, and compare Lemma \ref{Kmult} and Theorem \ref{GPHL} with (\ref{indchar}) to find
$$\left[\frac{\mathrm{ind} ( \la )}{N (j+1)} :L_{\la^{(j)}}\right]_q = \left[\frac{N ( j )}{N (j+1)} :L_{\la^{(j)}}\right]_q = b_{\la^{(j)}}^{-1} \cdot \mathrm{gdim} \, \mathrm{hom}_{A_{n_0}} ( \mathrm{ind} ( \la ), K_{\la^{(j)}}^* )^*.$$
Since $\Psi ( [\mathrm{ind} ( \la ) / N ( j + 1 )])$ must be the sum of $Q^{\vee}_{\gamma}$ for $\gamma = \la^{(k)}$ ($k \le j + 1$) by the induction hypothesis and the above formulae, Theorem \ref{GPHL} implies
$$[N ( j ) / N (j+1) :L_{\mu}]_q = 0 \hskip 5mm \text{if}\hskip 5mm \mu \not\ge \la^{(j)}.$$
It follows that $N ( j ) / N (j+1)$ admits a surjection from direct sum of $\widetilde{K}_{\la^{(j)}}$ with its multiplicity $\mathrm{gdim} \, \mathrm{hom}_{A_{n_0}} ( \mathrm{ind} ( \la ), K_{\la^{(j)}}^* )^*$ (as this latter number counts the number of generators of $N ( j ) / N (j+1)$). Applying Corollary \ref{surjmult}, we conclude that $N (j) / N (j+1)$ is the direct sum of grading shifts of $\widetilde{K}'_{\la^{(j)}}$. These proceed the induction, and we conclude the result.
\end{proof}

\begin{claim}\label{ind-filt}
Let us enumerate as $S = \{\gamma_1 < \gamma_2 < \cdots < \gamma_s \}$. We have a finite increasing filtration
$$\{0\} = G_0 \subset G_1 \subset G_2 \subset \cdots \subset G_{s} = \mathrm{ind} ( \la )$$
as $A_{n_0}$-modules such that each $G_i / G_{i-1}$ is isomorphic to the direct sum of grading shifts of $\widetilde{K}'_{\gamma_i}$. In addition, each $G_s / G_{i-1}$ contains a copy of $\widetilde{K}'_{\gamma_i}$ as its $A_{n_0}$-module direct summand.
\end{claim}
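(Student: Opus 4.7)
The plan is to regroup the filtration $\{N(j)\}_{j=1}^{\ell(\la)+1}$ according to distinct values of $\la^{(j)}$, identify the graded pieces by Corollary~\ref{surjmult}, and extract the direct summand via Frobenius reciprocity. For each $1\le i\le s$, let $j_i$ be the smallest index with $\la^{(j_i)}=\gamma_i$. Since $\la^{(\bullet)}$ is a weakly decreasing sequence and $\gamma_1<\cdots<\gamma_s$, these indices satisfy $1=j_s<j_{s-1}<\cdots<j_1\le\ell(\la)+1$. Setting $j_0:=\ell(\la)+2$ together with the convention $N(\ell(\la)+2):=\{0\}$, define $G_i:=N(j_i)$ for $0\le i\le s$, so that $G_0=\{0\}\subset G_1\subset\cdots\subset G_s=\mathrm{ind}(\la)$ is a strictly increasing filtration.

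Each quotient $G_i/G_{i-1}=N(j_i)/N(j_{i-1})$ carries an induced finite filtration with layers $N(k)/N(k+1)$ for $j_i\le k<j_{i-1}$; each such layer satisfies $\la^{(k)}=\gamma_i$ by the choice of $j_i$, hence is a direct sum of grading shifts of $\widetilde{K}'_{\gamma_i}$ by Claim~\ref{direct}. Lifting the $L_{\gamma_i}$-head of every such summand layer-by-layer into $G_i/G_{i-1}$ produces an $L_{\gamma_i}$-isotypic graded $\mathfrak{S}_{n_0}$-subspace that generates $G_i/G_{i-1}$ as an $A_{n_0}$-module, of total graded multiplicity $b_{\gamma_i}^{-1}\sum_\ell q^{d_\ell}$; combined with the vanishing $[G_i/G_{i-1}:L_\mu]_q=0$ for $\mu\not\ge\gamma_i$, inherited layer-wise from $\widetilde{K}'_{\gamma_i}$ via Lemma~\ref{Kmult} and Proposition~\ref{selfex}, Corollary~\ref{surjmult} then gives the direct-sum decomposition $G_i/G_{i-1}\cong\bigoplus_\ell\widetilde{K}'_{\gamma_i}\langle d_\ell\rangle$.

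The direct-summand assertion is the main obstacle. My plan is to construct a degree-$0$ $A_{n_0}$-homomorphism $\pi:\mathrm{ind}(\la)\to\widetilde{K}^+_{\gamma_i}$ via the Frobenius reciprocity identification $\mathrm{hom}_{A_{n_0}}(\mathrm{ind}(\la),\widetilde{K}^+_{\gamma_i})\cong\mathrm{hom}_{A_{1,n_0-1}}(\C[X]\boxtimes\widetilde{K}_\la,\widetilde{K}^+_{\gamma_i})$, choosing the source $A_{1,n_0-1}$-map so that it matches the $L_\la$-component appearing in the restriction of $\mathrm{Ind}^{\mathfrak{S}_{n_0}}_{\mathfrak{S}_{\gamma_i}}\mathsf{triv}$ to $\mathfrak{S}_{1,n_0-1}$, which exists precisely because $\la=(\gamma_i)_{(j_i)}$. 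By Lemma~\ref{udeg0} the image of $\pi$ is contained in, and exhausts, the submodule $\widetilde{K}'_{\gamma_i}\subset\widetilde{K}^+_{\gamma_i}$. Crucially, $\pi(G_{i-1})=0$: the submodule $G_{i-1}=\sum_{k<i}N_{\gamma_k}$ is $A_{n_0}$-generated by $L_{\gamma_k}$-isotypic pieces with $\gamma_k<\gamma_i$, in particular $\gamma_k\not\ge\gamma_i$, whereas $\widetilde{K}'_{\gamma_i}$ has composition factors $L_\mu$ only for $\mu\ge\gamma_i$, so these generators necessarily map to zero.

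Therefore $\pi$ descends to a surjection $\overline{\pi}:G_s/G_{i-1}\twoheadrightarrow\widetilde{K}'_{\gamma_i}$, and its restriction to the degree-$0$ $\widetilde{K}'_{\gamma_i}$-direct summand $V$ of $G_i/G_{i-1}$ coming from the layer $k=j_i$ is a degree-$0$ endomorphism of $\widetilde{K}'_{\gamma_i}$; Proposition~\ref{ext-est} identifies $\mathrm{end}_{A_{n_0}}(\widetilde{K}'_{\gamma_i})_0$ with $\C$, so $\overline{\pi}|_V$ is a non-zero scalar, and after rescaling $\overline{\pi}$ provides the desired retraction, exhibiting $\widetilde{K}'_{\gamma_i}\cong V$ as a direct summand of $G_s/G_{i-1}$. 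The hardest step will be verifying cleanly that the Frobenius-reciprocity construction yields a genuinely non-zero restriction to the distinguished layer $k=j_i$, which should follow from a direct combinatorial matching using Lemma~\ref{Ktriv} to describe each factor $\widetilde{K}_{(\gamma_{i,r})}$ together with the Garsia--Procesi filtration of $\widetilde{K}^+_{\gamma_i}$ restricted to $A_{1,n_0-1}$ via Theorem~\ref{GP}.
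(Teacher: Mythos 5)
Your reconstruction of the first assertion works but is more elaborate than necessary: by the definition $N(j)=\sum_{l\ge j}N_{\la^{(l)}}$, whenever $\la^{(k)}=\la^{(k+1)}$ one has $N_{\la^{(k)}}=N_{\la^{(k+1)}}$ and hence $N(k)=N(k+1)$. So along a run of equal values the intermediate $N(k)$ all coincide, and $G_i/G_{i-1}=N(j_i)/N(j_{i-1})$ is literally a single non-trivial layer $N(j_i)/N(j_i+1)$, which Claim~\ref{direct} already identifies as a direct sum of shifts of $\widetilde{K}'_{\gamma_i}$. No appeal to Corollary~\ref{surjmult} or to layer-by-layer reassembly is needed; this is exactly what the paper means by \textquotedblleft a rearrangement of Claim~\ref{direct}.\textquotedblright

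The second (direct-summand) part of your argument has a genuine gap, and the plan as written cannot be completed. The heart of the problem is the construction of $\pi$. Via Frobenius reciprocity a degree-$0$ map $\pi:\mathrm{ind}(\la)\to\widetilde{K}^+_{\gamma_i}$ is the same as an $A_{1,n_0-1}$-module map $f:\C[X]\boxtimes\widetilde{K}_\la\to\mathrm{res}_{1,n_0-1}\widetilde{K}^+_{\gamma_i}$. You want $f$ at degree $0$ to be the $\mathfrak S_{n_0-1}$-embedding $L_\la\hookrightarrow L_{\gamma_i}\subset(\widetilde{K}^+_{\gamma_i})_0$. But such a choice at degree $0$ need not extend to an $A_{n_0-1}$-linear map from $\widetilde{K}_\la$: that would require the vector $f(L_\la)$ to be annihilated by $\ker(\C[X_1,\dots,X_{n_0-1}]\to\mathrm{end}(\widetilde{K}_\la))$, which fails in general. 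A concrete small example: take $n_0=3$, $\la=(2)$, $\gamma_i=(2,1)=\la^{(2)}$. Here $\widetilde{K}_{(2)}=\C[X_1,X_2]/(X_1-X_2)$, so a degree-$0$ map $\widetilde{K}_{(2)}\to\mathrm{res}\,\widetilde{K}^+_{(2,1)}$ must send the generator to a vector $v\in(\widetilde{K}^+_{(2,1)})_0$ satisfying $(X_1-X_2)v=0$. Writing $(\widetilde{K}^+_{(2,1)})_0=\C v_1\oplus\C v_2\oplus\C v_3$ with $v_1=e\otimes1$, $v_2=s_2\otimes1$, $v_3=s_1s_2\otimes1$, one computes $(X_1-X_2)v_1=0$, $(X_1-X_2)v_2=s_2\otimes(\tfrac12Y_1-Y_2)$, $(X_1-X_2)v_3=s_1s_2\otimes(Y_2-\tfrac12Y_1)$, so the only admissible $v$ is a multiple of $v_1$. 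This $v_1$ does \emph{not} lie in the $L_{(2,1)}$-isotypic component; its $\mathfrak S_3$-span is all of $(\widetilde{K}^+_{(2,1)})_0=L_{(3)}\oplus L_{(2,1)}$, so the resulting $\pi$ is surjective onto $\widetilde{K}^+_{(2,1)}$ rather than onto $\widetilde{K}'_{(2,1)}$. In particular the appeal to Lemma~\ref{udeg0} is also misplaced: that lemma concerns maps $\widetilde{K}_\la\to\widetilde{K}^+_\la$ with the same partition, not maps $\mathrm{ind}(\la)\to\widetilde{K}^+_{\gamma_i}$ across ranks, and does not control the image here. The \textquotedblleft hardest step\textquotedblright\ you flagged is therefore worse than unverified; the construction it presupposes does not exist.

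The paper's argument is genuinely different: it never attempts to build an explicit retraction from $\mathrm{ind}(\la)$. Instead it observes that $\mathsf{hd}\,(G_s/G_{i-1})=(G_s/G_{i-1})_0$ contains a unique degree-zero copy of $L_{\gamma_i}$ (coming from the Pieri decomposition of $\mathrm{Ind}^{\mathfrak S_{n_0}}_{\mathfrak S_{n_0-1}}L_\la$), that all composition factors of $G_s/G_{i-1}$ have labels $\ge\gamma_i$, and then uses the structural results Proposition~\ref{selfex} (the $K_{\gamma_i}$-filtration of $\widetilde{K}'_{\gamma_i}$) and Proposition~\ref{char-K} (head/socle of $K_{\gamma_i}$ and the $\mathrm{ext}^1$-vanishing) to conclude that the $L_{\gamma_i}$-generated submodule, which equals the unshifted summand of $G_i/G_{i-1}$, in fact splits off $G_s/G_{i-1}$. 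That abstract argument avoids the Frobenius-reciprocity construction entirely and is what actually closes the proof.
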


\begin{proof}
The first part is a rearrangement of Claim \ref{direct}.

We have $L_{\gamma_i} \subset \mathrm{Ind}^{\mathfrak S_{n_0}}_{\mathfrak S_{n_0-1}} L_\la$ as $\mathfrak S_{n_0}$-modules. If we have $[G_{s} / G_{i-1} : L_{\mu}]_q \neq 0$, then Claim \ref{direct} implies $[\widetilde{K}'_{\gamma_j} : L_{\mu}]_q \neq 0$ for some $i \le j \le s$. By Lemma \ref{Kmult}, we conclude that $\mu \ge \gamma_i$. Since $\mathrm{Ind}^{\mathfrak S_{n_0}}_{\mathfrak S_{n_0-1}} L_\la = \mathrm{ind} ( \la )_0$, we find a degree zero copy of $L_{\gamma_i}$ in $\mathsf{hd} \, \mathrm{ind} ( \la )$. By Propositions \ref{selfex} and \ref{char-K}, it must lift to a direct summand $\widetilde{K}'_{\gamma_i} \subset G_{s} / G_{i-1}$. This implies the second assertion.
\end{proof}

\begin{claim}
For each $\gamma \in S$, we have
\begin{equation}
\mathrm{ext}^{k}_{A_n} ( \widetilde{K}'_{\gamma}, K_{\mu}^* ) = \begin{cases} \C & ( k=0, \gamma = \mu )\\ \{ 0 \} & (\text{else}) \end{cases}.\label{1stext}
\end{equation}
\end{claim}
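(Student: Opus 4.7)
The plan is to proceed by induction on the index $i$ in the ordering $\gamma_1 < \gamma_2 < \cdots < \gamma_s$ of $S$, establishing (\ref{1stext}) for $\gamma = \gamma_i$ and all $\mu \in \cP_{n_0}$.

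First I would handle the higher Ext vanishing. Under the inductive hypothesis that (\ref{1stext}) holds for all $\gamma_j$ with $j < i$, the filtration $0 \subset G_1 \subset \cdots \subset G_{i-1}$ has subquotients that are direct sums of grading shifts of the $\widetilde{K}'_{\gamma_j}$ ($j < i$), so a standard d\'evissage gives $\mathrm{ext}^k_{A_{n_0}}(G_{i-1}, K_\mu^*) = 0$ for all $k \geq 1$ and $\mu \in \cP_{n_0}$. Applying $\mathrm{ext}^\bullet_{A_{n_0}}(-, K_\mu^*)$ to
$$0 \to G_{i-1} \to \mathrm{ind}(\la) \to \mathrm{ind}(\la)/G_{i-1} \to 0,$$
and using the vanishing $\mathrm{ext}^k_{A_{n_0}}(\mathrm{ind}(\la), K_\mu^*) = 0$ for $k \geq 1$ from (\ref{FR-main})--(\ref{extcalc-main}), the long exact sequence yields $\mathrm{ext}^k_{A_{n_0}}(\mathrm{ind}(\la)/G_{i-1}, K_\mu^*) = 0$ for $k \geq 2$. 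Since Claim \ref{ind-filt} identifies $\widetilde{K}'_{\gamma_i}$ as a direct $A_{n_0}$-summand of $\mathrm{ind}(\la)/G_{i-1}$, the same vanishing transfers: $\mathrm{ext}^k_{A_{n_0}}(\widetilde{K}'_{\gamma_i}, K_\mu^*) = 0$ for $k \geq 2$.

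Next I would pin down $\hom_{A_{n_0}}(\widetilde{K}'_{\gamma_i}, K_\mu^*)$ by a head--socle analysis. Any non-zero $f : \widetilde{K}'_{\gamma_i} \to K_\mu^*$ has image whose head is a quotient of $\mathsf{hd}\,\widetilde{K}'_{\gamma_i} = L_{\gamma_i}$ and which must contain $\mathsf{soc}\, K_\mu^* = L_\mu$ by Proposition \ref{char-K}; hence $L_{\gamma_i}$ occurs in $K_\mu^*$ (forcing $\gamma_i \geq \mu$ by Lemma \ref{Kmult}) while $L_\mu$ occurs in $\widetilde{K}'_{\gamma_i}$ (forcing $\mu \geq \gamma_i$). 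Thus $\hom_{A_{n_0}}(\widetilde{K}'_{\gamma_i}, K_\mu^*) = 0$ whenever $\gamma_i \neq \mu$. When $\gamma_i = \mu$, such an $f$ is determined by its restriction to the degree-zero cyclic generator $L_\mu \subset \widetilde{K}'_\mu$, which factors through $\hom_{\mathfrak S_{n_0}}(L_\mu, (K_\mu^*)_0) \cong \C$; so $\mathrm{gdim}\,\hom_{A_{n_0}}(\widetilde{K}'_\mu, K_\mu^*)$ is concentrated in degree zero with value at most $1$.

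Finally, I would close the argument with the Euler--Poincar\'e pairing. The corollary after Proposition \ref{selfex} combined with Proposition \ref{orth-id} and Theorem \ref{defHall} yields
$$\left< [\widetilde{K}'_{\gamma_i}],\, [K_\mu] \right>_{EP} = \left< Q^\vee_{\gamma_i}, Q_\mu \right> = \delta_{\gamma_i, \mu},$$
which, using the vanishing of $\mathrm{ext}^k$ for $k \geq 2$, reduces (after the harmless $q \leftrightarrow q^{-1}$ reflection built into the dualization in $\left<\bullet,\bullet\right>_{EP}$) to the identity
$$\mathrm{gdim}\,\hom_{A_{n_0}}(\widetilde{K}'_{\gamma_i}, K_\mu^*) - \mathrm{gdim}\,\mathrm{ext}^1_{A_{n_0}}(\widetilde{K}'_{\gamma_i}, K_\mu^*) = \delta_{\gamma_i,\mu}.$$
For $\gamma_i \neq \mu$ the hom vanishes, forcing $\mathrm{ext}^1 = 0$; for $\gamma_i = \mu$ the upper bound on the hom from the previous paragraph together with non-negativity of graded dimensions forces $\mathrm{gdim}\,\hom = 1$ and $\mathrm{ext}^1 = 0$, completing the inductive step. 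The most delicate point is that the d\'evissage yields Ext-vanishing only in degrees $\geq 2$, so the $k = 1$ case must be recovered via the Euler form; the head--socle analysis is precisely what supplies the needed hom-input (vanishing in the off-diagonal case, an a priori upper bound in the diagonal case).
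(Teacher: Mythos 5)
Your proof is correct, and it takes a genuinely different route from the paper at the decisive step. Both arguments rely on the same inputs (Claim~\ref{ind-filt}, the vanishing $\mathrm{ext}^{>0}_{A_{n_0}}(\mathrm{ind}(\la), K_\mu^*) = 0$ from (\ref{FR-main})--(\ref{extcalc-main}), and a head--socle computation of $\hom$), but they close the $\mathrm{ext}^1$ gap differently. The paper works from the quotient side: it proves (\ref{1stext}) together with the auxiliary statement $\mathrm{ext}^{>0}(G_s/G_j, K_\mu^*) = 0$ by a simultaneous induction, using the multiplicity-counting isomorphism (\ref{homeqind}) in the long exact sequence for $0 \to (\widetilde{K}'_{\gamma_i})^{\oplus\star} \to G_s/G_{i-1} \to G_s/G_i \to 0$ to conclude that $\mathrm{ext}^1(G_s/G_i, K_\mu^*) = 0$, which transfers directly to the summand $\widetilde{K}'_{\gamma_i}$. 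You instead work from the submodule side: the d\'evissage through $G_{i-1}$ only yields $\mathrm{ext}^{\ge 2}$ vanishing for the quotient $\mathrm{ind}(\la)/G_{i-1}$ and hence for $\widetilde{K}'_{\gamma_i}$, and you recover the $k=1$ case from the Euler--Poincar\'e identity $\langle[\widetilde{K}'_{\gamma_i}],[K_\mu]\rangle_{EP} = \langle Q^\vee_{\gamma_i}, Q_\mu\rangle = \delta_{\gamma_i,\mu}$ supplied by Proposition~\ref{orth-id}, Theorem~\ref{defHall}, and the corollary after Proposition~\ref{selfex}, together with the non-negativity of graded dimensions and your a priori upper bound $\dim\hom \le 1$ in the diagonal case. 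Your route has the advantage of not needing to establish (\ref{homeqind}) and keeps the induction purely on the statement (\ref{1stext}); the cost is that it imports the global character identity $\Psi([\widetilde{K}'_\la]) = Q^\vee_\la$ (which is, however, already available at this point in \S2.3 and is also used by the paper in the surrounding computations around (\ref{indchar})). One small remark: your d\'evissage implicitly uses that each subquotient $G_j/G_{j-1}$ is a \emph{finite} direct sum of grading shifts of $\widetilde{K}'_{\gamma_j}$, which holds because $\mathrm{ind}(\la)$ is a finitely generated module over a Noetherian ring and is guaranteed by Corollary~\ref{surjmult}; it is worth making this explicit.
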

\begin{proof}
We prove (\ref{1stext}) and
\begin{equation}
\mathrm{ext}^{>0}_{A_n} ( G_s/G_{j}, K_{\mu}^* ) = 0 \label{Gext}
\end{equation}
for $0 \le j \le s$ by induction. The $j=0$ case of (\ref{Gext}) follows by (\ref{FR-main}). The $j=i-1$ case of (\ref{Gext}) implies (\ref{1stext}) for $\gamma = \gamma_i$ and $k > 0$ as $G_s / G_{i-1}$ contains $\widetilde{K}'_{\gamma_i}$ as its direct summand by Claim \ref{ind-filt}. We have
\begin{equation}
\mathrm{hom}_{A_{n_0}} ( \widetilde{K}'_{\gamma}, K_{\mu}^*) = \begin{cases} \C & (\gamma=\mu)\\ 0 & (\gamma \neq \mu)\end{cases}\label{KK-hom}
\end{equation}
by Lemma \ref{Kmult}, $\mathsf{hd} \, \widetilde{K}_{\gamma} = L_{\gamma}$, and $\mathsf{soc} \, K_{\mu}^* = L_{\mu}$. By counting the multiplicities of $L_{\gamma_i}$, we deduce
\begin{equation}
\mathrm{hom}_{A_{n_0}} ( G_s / G_{j-1}, K_{\gamma_j}^*) \stackrel{\cong}{\longrightarrow} \mathrm{hom}_{A_{n_0}} ( ( \widetilde{K}'_{\gamma_j} )^{\oplus \star}, K_{\gamma_j}^*)\label{homeqind}
\end{equation}
for $0 \le j \le s$ from Claim \ref{ind-filt}.

Now a part of the long exact sequence
\begin{align*}
0 \rightarrow & \, \mathrm{hom}_{A_{n_0}} ( G_s/G_{i}, K_{\mu}^*) \rightarrow \mathrm{hom}_{A_{n_0}} ( G_s/G_{i-1}, K_{\mu}^*) \stackrel{\cong}{\longrightarrow} \mathrm{hom}_{A_{n_0}} ( ( \widetilde{K}'_{\gamma_i} )^{\oplus \star}, K_{\mu}^*)\\
\rightarrow & \, \mathrm{ext}^1_{A_{n_0}} ( G_s/G_{i}, K_{\mu}^*) \rightarrow \mathrm{ext}^1_{A_{n_0}} ( G_s/G_{i-1}, K_{\mu}^*) = 0
\end{align*}
associated to the short exact sequence
$$0 \rightarrow ( \widetilde{K}' _{\gamma_i} ) ^{\oplus \star} \rightarrow G_s/G_{i-1} \rightarrow G_s / G_{i} \rightarrow 0,$$
as well as (\ref{KK-hom}) and (\ref{homeqind}), yields (\ref{1stext}) for $\gamma = \gamma_i$ and (\ref{Gext}) for $j=i$ from (\ref{Gext}) for $j=i-1$ inductively on $i$.
\end{proof}

We return to the proof of Theorem \ref{main}. All elements of $\cP_{n_0}$ appear as $\la^{(j)}$ for suitable $\la \in \cP_{n_0-1}$ and $1 \le j \le ( \ell ( \la ) + 1 )$. By rearranging $\la$ if necessary, we conclude (\ref{1stext}) for every $\gamma \in \cP_{n_0}$. A repeated use of short exact sequences decomposes $\{ K_{\mu} \}_{\gamma \le \mu}$ into $\{ L_{\mu} \}_{\gamma \le \mu}$ by starting from $K_{(n)} = L_{(n)}$ (see Lemma \ref{Kmult}). Substituting these to the second factor of (\ref{1stext}), we deduce
$$\mathrm{ext}^{>0}_{A_n} ( \widetilde{K}'_{\gamma}, L_{\mu} ) \neq 0 \hskip 3mm \text{implies} \hskip 3mm \mu < \gamma.$$
This implies $\widetilde{K}'_{\gamma} = \widetilde{K}_{\gamma}$ for all $\gamma \in \cP_{n_0}$. Therefore, Proposition \ref{ext-est} and Proposition \ref{selfex} imply Theorem \ref{main} 1) and 2) for $n = n_0$, and (\ref{1stext}) is Theorem \ref{main} 3) for $n = n_0$. 

\medskip

In view of the above arguments, we find that each $\mathrm{ind} ( \la )$ $(\la \in \cP_{n_0-1})$ admits a $\Delta$-filtration. Since $\mathrm{ind}_{1,\star}$ preserves projectivity, we deduce that $A_{n_0}$ admits a filtration by $\mathrm{ind} ( \la )$ $(\la \in \cP_{n_0-1})$ by the induction hypothesis. Therefore, $A_{n_0}$ admits a $\Delta$-filtration. Since each $\widetilde{K}_\la$ is generated by its simple head, applying an idempotent does not separate them out non-trivially. Therefore, we conclude that each projective module of $A_{n_0}$ also admits a $\Delta$-filtration. Given this and Theorem \ref{main} 2) and 3), the latter assertion of Theorem \ref{main} 4) is standard (see e.g. \cite[Corollary 3.12]{Kat17}). This is Theorem \ref{main} 4) for $n = n_0$. 

This completes the proof of Theorem \ref{main}.

\subsection{Applications of Theorem \ref{main}}\label{indproof}

Note that $A_n$ is a Noetherian ring as a finitely generated $A_n$-module is also finitely generated by $\C [X_1,\ldots,X_n]$. The global dimension of $A_n$ is finite (Theorem \ref{gdim}). We have $\mathrm{gdim} \, A_n \in \Z [\![q]\!]$ by inspection.

We introduce a total order $\prec$ on $\cP_n$ that refines $\le$ and set $\mathbf{e}_{\la} := \sum_{\la \succ \mu \in \cP_n} e_{\mu}$ for each $\la \in \cP_n$. The two sided ideals $A_n \mathbf{e}_{\la} A_n \subset A_n$ satisfies $A_n \mathbf{e}_{\la} A_n \subset A_n \mathbf{e}_{\mu} A_n$ if $\mu \succ \la$. By Lemma \ref{Kmult}, we deduce that
$$( A_n \mathbf{e}_\la A_n ) \otimes_{A_n} P_\la \longrightarrow \widetilde{K}_\la$$
is a surjection. By Proposition \ref{char-K} and Theorem \ref{main} 2), we further deduce
$$( A_n \mathbf{e}_\la A_n ) \otimes_{A_n} P_\la \stackrel{\cong}{\longrightarrow} \widetilde{K}_\la.$$

Theorem \ref{main} 1) implies that $\mathrm{end}_{A_n} ( \widetilde{K}_\la )$ is a graded polynomial ring for each $\la \in \cP_n$. In conjunction with Theorem \ref{main} 2), we find that
$$\mathrm{end}_{A_n} ( P_\mu, \widetilde{K}_\la )$$
is a free module over $\mathrm{end}_{A_n} ( \widetilde{K}_\la )$ for each $\la,\mu \in \cP_n$. In particular, that the graded algebra $A_n$ is an affine quasi-hereditary in the sense of \cite[Introduction]{Kle14} with $\Delta_\la = \widetilde{K}_\la$ and $\overline{\nabla}_\la = K_\la^*$ $(\la \in \cP_n)$.

\begin{thm}[\cite{Kle14} Theorem 7.21 and Lemma 7.22]\label{crit-filt}
A module $M \in A \mathchar`-\mathsf{gmod}$ admits a $\Delta$-filtration if and only if
$$\mathrm{ext}^1_{A_n} ( M, K_{\la}^* ) = 0 \hskip 5mm \la \in \cP_n.$$
A module $M \in A \mathchar`-\mathsf{fmod}$ admits a $\overline{\Delta}$-filtration if and only if
$$\mathrm{ext}^1_{A_n} ( \widetilde{K}_{\la}, M^* ) = 0 \hskip 5mm \la \in \cP_n.$$
\end{thm}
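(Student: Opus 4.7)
My plan is to prove the first characterization (for $\Delta$-filtrations) in detail, and to obtain the second from it by a duality argument using Theorem \ref{ad} together with the symmetric role of $\widetilde{K}$ and $K^*$ in Theorem \ref{main}(3).

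For the forward direction of the first statement, given a $\Delta$-filtration $M = F_0 M \supset F_1 M \supset \cdots$, I would iterate the long exact sequences associated to $0 \to F_{k+1} M \to F_k M \to \mathrm{gr}^F_k M \to 0$ together with the Ext-orthogonality $\mathrm{ext}^1_{A_n}(\widetilde{K}_\mu, K_\la^*) = 0$ coming from Theorem \ref{main}(3). Since $K_\la^*$ is finite-dimensional (Proposition \ref{RK-id}) and $M$ has its graded pieces finite-dimensional and bounded below, any fixed graded degree of $\mathrm{ext}^1_{A_n}(M, K_\la^*)$ is detected by only finitely many layers of the filtration. Thus separability is enough to pass from the finite-length case to a possibly infinite $\Delta$-filtration.

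For the backward direction, my plan is to mimic the inductive construction of a $\Delta$-filtration from Claim \ref{ind-filt} in the proof of Theorem \ref{main}. Enumerate $\cP_n$ as $\gamma_1 < \cdots < \gamma_N$ via a linear refinement of dominance. For each $i$, let $f_{\gamma_i}^+ : M \to (K_{\gamma_i}^*)^{\oplus r_i}$ be the universal morphism with $r_i = \dim \mathrm{hom}_{A_n}(M, K_{\gamma_i}^*)$, let $N_{\gamma_i} \subset M$ be the $A_n$-submodule generated by the preimage of $\mathsf{soc}\, \mathrm{Im}\, f_{\gamma_i}^+$, and set $G_i := \sum_{j \le i} N_{\gamma_j}$. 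The core inductive claim is that $G_i/G_{i-1}$ is a direct sum of grading shifts of $\widetilde{K}_{\gamma_i}$; to verify this I would check that the $L_\mu$-multiplicities in $G_i/G_{i-1}$ match the pattern required by Corollary \ref{surjmult}, exploiting the Ext-vanishing hypothesis through the long exact sequence together with Lemma \ref{Kmult} and Proposition \ref{char-K}.

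The main obstacle I expect is showing the exhaustion $G_N = M$. Setting $M' := M/G_N$, the forward direction applied to the $\Delta$-filtration already built on $G_N$ would give $\mathrm{ext}^1_{A_n}(G_N, K_\la^*) = 0$, so the hypothesis on $M$ would force $\mathrm{ext}^1_{A_n}(M', K_\la^*) = 0$. If $M' \neq 0$, a minimal $L_{\mu_0}\langle m \rangle$ in its head would yield a surjection $M' \twoheadrightarrow L_{\mu_0}\langle m\rangle$ that lifts along $K_{\mu_0}^* \twoheadrightarrow L_{\mu_0}\langle m\rangle$ by using the Ext-vanishing and the socle structure of $K_{\mu_0}^*$ from Proposition \ref{char-K}. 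The resulting morphism $M \to K_{\mu_0}^*$ would then escape $f_{\mu_0}^+$, contradicting the maximality built into $r_{\mu_0}$. The second statement then follows by applying $M \mapsto M^*$ and Theorem \ref{ad}: a $\overline{\Delta}$-filtration of $M$ by $K_\nu\langle m\rangle$'s (for $r=0$) is exactly a $K_\nu^*$-filtration of $M^*$, and the dual backward construction (using $L_\la$-socles rather than heads, and maps from $\widetilde{K}_\la$ in place of maps to $K_\mu^*$) produces it from $\mathrm{ext}^1_{A_n}(\widetilde{K}_\la, M^*) = 0$.
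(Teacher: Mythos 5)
The paper does not prove Theorem \ref{crit-filt} itself: it is quoted verbatim from Kleshchev \cite{Kle14} (Theorem 7.21 and Lemma 7.22), after the preceding paragraph verifies that $A_n$ is an affine quasi-hereditary algebra with $\Delta_\la = \widetilde{K}_\la$ and $\overline{\nabla}_\la = K_\la^*$. You are therefore attempting a self-contained proof, which is a legitimately different route; the paper buys the result wholesale by checking the axioms of \cite{Kle14}, whereas you try to re-run the filtration construction from the proof of Theorem \ref{main}. Your forward direction is essentially correct: iterating the long exact sequences using $\mathrm{ext}^1_{A_n}(\widetilde{K}_\mu, K_\la^*) = 0$ (Theorem \ref{main}(3)), together with the boundedness observation that each graded piece of $\mathrm{ext}^1_{A_n}(M, K_\la^*)$ sees only finitely many layers, does work.

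The backward direction, however, has two genuine gaps. First, the inductive claim that $G_i/G_{i-1}$ is a direct sum of grading shifts of $\widetilde{K}_{\gamma_i}$, which you intend to extract from Corollary \ref{surjmult}, is exactly the hard part and is not actually carried out. In the paper's Claim \ref{direct} the required multiplicity pattern $[G_i/G_{i-1}:L_\mu]_q$ is obtained from the explicit character computation (\ref{indchar}) via Corollary \ref{idnum}, which uses the full Euler--Poincar\'e pairing and hence the vanishing of \emph{all} higher $\mathrm{ext}^k$, $k\ge 1$, for the module $\mathrm{ind}(\la)$. Your hypothesis only gives $\mathrm{ext}^1_{A_n}(M, K_\la^*)=0$; you would need to simultaneously establish $\mathrm{ext}^{k}_{A_n}(M, K_\la^*)=0$ for $k\ge 2$ before any Euler-characteristic bookkeeping becomes available, and this is itself one of the assertions such a theorem is supposed to deliver. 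Second, your exhaustion argument contains a concrete error: you invoke a surjection ``$K_{\mu_0}^* \twoheadrightarrow L_{\mu_0}\langle m\rangle$''. By Proposition \ref{char-K}, $\mathsf{hd}\,K_{\mu_0} = L_{\mu_0}$ and $\mathsf{soc}\,K_{\mu_0} = L_{(n)}\langle n(\mu_0)\rangle$, so dualizing gives $\mathsf{soc}\,K_{\mu_0}^* = L_{\mu_0}$ and $\mathsf{hd}\,K_{\mu_0}^* = L_{(n)}$; there is no surjection $K_{\mu_0}^* \to L_{\mu_0}$ unless $\mu_0=(n)$. The intended lifting step therefore does not go through as written; one would instead need to start from the inclusion $L_{\mu_0}\hookrightarrow K_{\mu_0}^*$ and argue via $\mathrm{ext}^1$-vanishing of appropriate cokernels, as in the analogous step inside the proof of Claim \ref{Njmult}, but this rewrite is not supplied.
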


\begin{cor}[\cite{Kle14} \S 7, particularly Lemma 7.5]\label{mult}
Let $M \in A \mathchar`-\mathsf{gmod}$. If $M$ admits a $\Delta$-filtration, then the multiplicity space of $\widetilde{K}_\la$ in $M$ is given by
$$\mathrm{hom}_{A_n} ( M, K_\la )^*.$$
If the module $M$ admits a $\overline{\Delta}$-filtration, then the multiplicity space of of $K_\la$ in $M$ is given by
$$\mathrm{hom}_{A_n} ( \widetilde{K}_\la, M^* )^*.$$
\end{cor}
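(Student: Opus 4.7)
The plan is to derive both assertions directly from the Ext-orthogonality $\mathrm{ext}^i_{A_n}(\widetilde{K}_\la, K_\mu^*) \cong \C^{\delta_{\la,\mu}\delta_{i,0}}$ of Theorem \ref{main} 3), combined with the duality in Theorem \ref{ad}. The vanishing of $\mathrm{Ext}^1$ is precisely what makes the bifunctor $\mathrm{hom}_{A_n}(-, K_\la^*)$ exact on $\Delta$-filtered modules, and dually makes $\mathrm{hom}_{A_n}(\widetilde{K}_\la, -)$ exact on $\overline{\Delta}$-filtered modules. The stated multiplicity formulae then fall out of a routine induction along the filtration (the $K_\la$ in the statement should be read as $K_\la^*$ under the costandard-module convention fixed by Theorem \ref{main} 3)).

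For the $\Delta$-filtered case, take $M = F_0 M \supset F_1 M \supset \cdots$ with $\mathrm{gr}^F_k M \cong \widetilde{K}_{\la_k}\langle m_k\rangle$, and apply $\mathrm{hom}_{A_n}(-, K_\la^*)$ to each short exact sequence
$$0 \longrightarrow F_{k+1}M \longrightarrow F_k M \longrightarrow \widetilde{K}_{\la_k}\langle m_k\rangle \longrightarrow 0.$$
Theorem \ref{main} 3) kills the connecting Ext, so the piece $\mathrm{hom}_{A_n}(\widetilde{K}_{\la_k}\langle m_k\rangle, K_\la^*) = \delta_{\la_k,\la}\C\langle -m_k\rangle$ injects into $\mathrm{hom}_{A_n}(F_kM, K_\la^*)$ with cokernel $\mathrm{hom}_{A_n}(F_{k+1}M, K_\la^*)$. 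Iterating down the filtration produces
$$\mathrm{hom}_{A_n}(M, K_\la^*) \cong \bigoplus_{k : \la_k = \la} \C\langle -m_k \rangle,$$
whose $*$-dual has graded dimension $\sum_{k : \la_k = \la} q^{m_k} = (M:\widetilde{K}_\la)_q$, as required.

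The $\overline{\Delta}$-filtered case is dual. If $M \in A\mathchar`-\mathsf{fmod}$ admits a $\overline{\Delta}$-filtration, then $M^*$ inherits a filtration whose graded pieces are (grading-shifted) $K_{\nu_k}^*$. Finite-dimensionality allows Theorem \ref{ad} to be applied freely, reducing the multiplicity count of $K_\la$ in $M$ to that of the corresponding costandard piece in $M^*$. Applying $\mathrm{hom}_{A_n}(\widetilde{K}_\la, -)$ to the induced short exact sequences and again using Theorem \ref{main} 3) to erase the $\mathrm{Ext}^1$ terms yields
$$\mathrm{hom}_{A_n}(\widetilde{K}_\la, M^*) \cong \bigoplus_{k : \nu_k = \la} \C\langle -m_k \rangle,$$
whose graded dual encodes the multiplicity of $K_\la$ in $M$.

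The one subtlety I expect to require care is the potentially infinite length of a $\Delta$-filtration, which raises the question whether the iterated surjections $\mathrm{hom}_{A_n}(F_k M, K_\la^*) \twoheadrightarrow \mathrm{hom}_{A_n}(F_{k+1}M, K_\la^*)$ truly assemble into an isomorphism with $\mathrm{hom}_{A_n}(M, K_\la^*)$. The needed interchange follows from separability of the filtration together with the fact that $K_\la^*$ has grading bounded in a single direction (its composition factors sit in a bounded range of degrees, by Proposition \ref{char-K} and Lemma \ref{Kmult}): in any fixed graded component, only finitely many filtration steps can contribute a nonzero map, so the inverse limit reduces to a finite calculation in each degree. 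This is the only analytic issue; everything else is a formal consequence of the homological orthogonality already established.
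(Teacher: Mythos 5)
Your proof is correct and is the standard argument Kleshchev uses (the paper simply cites \cite{Kle14} Lemma 7.5 rather than reproving it): apply $\mathrm{hom}_{A_n}(-,K_\la^*)$ (resp.\ $\mathrm{hom}_{A_n}(\widetilde{K}_\la,-)$ to the dual filtration) along the short exact sequences and invoke the $\mathrm{ext}^1$-vanishing from Theorem \ref{main}~3), with the convergence issue for infinite $\Delta$-filtrations handled exactly as you describe via separability and the degreewise finite-dimensionality of $M$. You are also right that the displayed $K_\la$ should be $K_\la^*$: the costandard object is $\overline{\nabla}_\la = K_\la^*$, as the paper itself says just before Theorem \ref{crit-filt}, and the usage in Corollary \ref{skewpos} confirms this.
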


\begin{thm}\label{indres}
Fix $n \ge 0$, and $0 \le r \le n$. Let $\la \in \cP_n, \mu \in \cP_r, \nu \in \cP_{n-r}$. We have the following:
\begin{enumerate}
\item {\rm(Garsia-Procesi \cite{GP92})} The module $\mathrm{res}_{r,n-r} \, K_\la$ admits a $\overline{\Delta}$-filtration;
\item The module $\mathrm{ind}_{r,n-r} \, ( P_{\mu} \boxtimes\widetilde{K}_\nu )$ admits a $\Delta$-filtration.
\end{enumerate}
\end{thm}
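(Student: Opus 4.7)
The plan is to derive both assertions from Theorem \ref{main} via the Frobenius--Nakayama adjunction (Theorem \ref{fnr}) and a K\"unneth-type computation of Ext groups. I would first establish (2) by induction on $r$, taking as base case the statement that $\mathrm{ind}_{1,n-1}(P_{(1)} \boxtimes \widetilde{K}_\nu)$ is $\Delta$-filtered, which is precisely Claim \ref{ind-filt} from the proof of Theorem \ref{main} (applied at every partition size). Then (1) follows from (2) via Frobenius--Nakayama and an $A_{r,n-r}$-analog of Theorem \ref{crit-filt}.

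For the inductive step of (2) at $(r,n)$ with $r \geq 2$: realize $P_\mu$ as a direct summand of $A_r \cong \mathrm{ind}^{A_r}_{A_{r-1,1}}(A_{r-1} \boxtimes A_1)$, from which transitivity of induction gives
\[
\mathrm{ind}^{A_n}_{A_{r,n-r}}(A_r \boxtimes \widetilde{K}_\nu) \cong \mathrm{ind}^{A_n}_{A_{r-1,n-r+1}}\!\left(A_{r-1} \boxtimes \mathrm{ind}^{A_{n-r+1}}_{A_{1,n-r}}(P_{(1)} \boxtimes \widetilde{K}_\nu)\right).
\]
The base case applied at partition size $n-r+1$ makes the inner tensor factor $\Delta$-filtered as an $A_{n-r+1}$-module; decomposing $A_{r-1} \cong \bigoplus_{\mu'} P_{\mu'}^{\oplus \dim L_{\mu'}}$ and invoking the induction hypothesis (2) at $(r-1,n)$ on each resulting $\mathrm{ind}^{A_n}_{A_{r-1,n-r+1}}(P_{\mu'} \boxtimes \widetilde{K}_{\nu'})$ presents the whole iterated induction as a successive extension of $\Delta$-filtered modules. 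Theorem \ref{crit-filt} characterizes $\Delta$-filtered modules by $\mathrm{ext}^1$-vanishing against $K_\la^*$, so the property is closed under extensions and direct summands; extracting the $P_\mu \boxtimes \widetilde{K}_\nu$ summand yields (2).

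To derive (1) from (2), apply Frobenius--Nakayama (Theorem \ref{fnr}) to convert
\[
\mathrm{ext}^i_{A_{r,n-r}}(P_\alpha \boxtimes \widetilde{K}_\beta, \mathrm{res}_{r,n-r} K_\la^*) \cong \mathrm{ext}^i_{A_n}(\mathrm{ind}_{r,n-r}(P_\alpha \boxtimes \widetilde{K}_\beta), K_\la^*).
\]
By (2) just proven, the right-hand side is the Ext of a $\Delta$-filtered module against $K_\la^*$, which vanishes in positive degrees by Theorem \ref{main} (3). The main obstacle is upgrading this vanishing to an honest $\overline{\Delta}$-filtration of $\mathrm{res}_{r,n-r} K_\la$: one needs the $A_{r,n-r}$-analog of Theorem \ref{crit-filt}, stating that a graded $A_{r,n-r}$-module $M$ admits a $\overline{\Delta}$-filtration (with pieces $L_\alpha \boxtimes K_\beta$) if and only if $\mathrm{ext}^1_{A_{r,n-r}}(P_\alpha \boxtimes \widetilde{K}_\beta, M^*) = 0$ for all $\alpha \in \cP_r$ and $\beta \in \cP_{n-r}$. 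This follows by treating $A_r \boxtimes A_{n-r}$ as an affine quasi-hereditary algebra in the sense of \cite{Kle14} with the trivial stratification on $A_r$ (where $\Delta_\mu^{A_r} := P_\mu$) and the Kostka stratification on $A_{n-r}$ from Theorem \ref{main}; the K\"unneth formula for Ext, projectivity of $P_\alpha$, and Theorem \ref{main} (3) supply the ext-vanishing on the characteristic pieces $L_\alpha \boxtimes K_\beta^*$.
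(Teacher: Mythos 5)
Your proposal is correct, but it takes a genuinely different route from the paper, so a comparison is worthwhile. For part (2), the paper avoids any induction on $r$: it applies Frobenius--Nakayama once to reduce to $\mathrm{ext}^{\bullet}_{A_{r,n-r}}(P_{\mu}\boxtimes\widetilde{K}_{\nu},K_{\la}^{*})$, then applies Theorem \ref{GP} $r$ times to filter $\mathrm{res}\,K_{\la}^{*}$ by $K_{\gamma}^{*}$'s over $A_{n-r}$, uses freeness of $P_{\mu}$ over the $r$-variable polynomial ring to drop to $\C\mathfrak{S}_{r}\boxtimes A_{n-r}$, and finishes with the K\"unneth isomorphism and Theorem \ref{main}~(3). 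You instead induct on $r$ using transitivity of induction and the explicit decomposition $A_{r-1}\cong\bigoplus_{\mu'}P_{\mu'}^{\oplus\dim L_{\mu'}}$, taking Claim \ref{ind-filt} from inside the proof of Theorem \ref{main} as the $r=1$ base case and closing under extensions and direct summands via the ext-vanishing criterion; this is sound but less self-contained, since it reaches into the internals of Theorem \ref{main}'s proof rather than treating it as a black box, and it re-derives iteratively what the paper gets in one shot by restricting $K_{\la}^{*}$. For part (1), the paper's argument is a one-liner — apply Theorem \ref{GP} $r$ times directly to $K_{\la}$ — whereas you derive it from (2) via adjunction, which is more uniform and nicely exhibits the duality between the two assertions, at the cost of invoking the stronger statement. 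You are right to flag the need for the $A_{r,n-r}$-analogue of Theorem \ref{crit-filt} with standard objects $P_{\mu}\boxtimes\widetilde{K}_{\nu}$: the paper also uses this (stating it for $\C\mathfrak{S}_{r}\boxtimes A_{n-r}$, which is equivalent because $P_{\mu}$ is free over $\C[X_{1},\ldots,X_{r}]$) without spelling it out, so this is a gap shared with the paper, not one you introduced; your sketch of how to fill it via Kleshchev's framework and the K\"unneth formula is reasonable.
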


\begin{rem}\label{HLrem}
One cannot swap the roles of $\{\widetilde{K}_\la\}_\la$ and $\{K_{\la}\}_\la$ in Theorem \ref{indres}. In fact, the polynomiality claim in Corollary \ref{HLnum} 2) is already nontrivial (without a prior knowledge of characters).
\end{rem}

\begin{proof}[Proof of Theorem \ref{indres}]
We prove the first assertion. By the second part of Theorem \ref{crit-filt}, it suffices to check the $\mathrm{ext}^1$-vanishing with respect to $L_{\mu}\boxtimes\widetilde{K}_{\nu}$ ($\mu \in \cP_r, \nu \in \cP_{n-r}$) as a module over $\C \mathfrak S_{r} \boxtimes A_{n-r}$ (equivalently, we can check the $\mathrm{ext}^1$-vanishing with respect to $P_{\mu} \boxtimes \widetilde{K}_{\nu}$ as a module of $A_{r,n-r}$; see below). In particular, we do not need to mind the first factor as the $\mathfrak S_r$-action is granted by construction. Therefore, the first assertion is just a $r$-times repeated application of Theorem \ref{GP}.

We prove the second assertion. For each $\la \in \cP_r, \mu \in \cP_{n-r}$ and $\nu \in \cP_{n}$, we have
\begin{equation}
\mathrm{ext}^{\bullet}_{A_{n}} ( \mathrm{ind}_{r,n-r} ( P_{\la} \boxtimes \widetilde{K} _\mu ), K_\nu^* ) \cong \mathrm{ext}^{\bullet}_{A_{r,n-r}} ( P_{\la} \boxtimes \widetilde{K} _\mu, K_\nu^* )\label{Ind1}
\end{equation}
by Theorem \ref{fnr}. Applying Theorem \ref{GP} to $K_{\nu}^*$ as many as $r$-times, we find that the restriction of $K_{\nu}$ to $A_{n-r}$ admits a filtration whose associated graded is the direct sum of grading shifts of $\{ K_{\gamma} \}_{\gamma \in \cP_{n-r}}$. Since $P_{\la}$ is free over a polynomial ring of $r$-variables, we have
$$\mathrm{ext}^{\bullet}_{A_{r,n-r}} ( P_{\la} \boxtimes \widetilde{K} _\mu, K_\nu^* ) \cong \mathrm{ext}^{\bullet}_{\C \mathfrak S_r \boxtimes A_{n-r}} ( L_{\la} \boxtimes \widetilde{K} _\mu, K_\nu^* ).$$
Thus, we derive a natural isomorphism
\begin{equation}
    \mathrm{ext}^{1}_{\C \mathfrak S_r \boxtimes A_{n-r}} ( L_{\la} \boxtimes \widetilde{K} _\mu, K_\nu^* ) \stackrel{\cong}{\longrightarrow} \mathrm{hom}_{\mathfrak S_r} ( L_{\la}, \mathrm{ext}_{A_{n-r}}^1 ( \widetilde{K} _\mu, K_\nu^* ) ).\label{pie}
\end{equation}
By Theorem \ref{main} 3) and Theorem \ref{GP}, the RHS of (\ref{pie}) is zero. By the first part of Theorem \ref{crit-filt}, we conclude the second assertion.
\end{proof}

\begin{cor}\label{HLnum}
Let $\la,\mu \in \cP$. We have the following:
\begin{enumerate}
\item We have $\Delta \, ( Q_\la ) \in \sum_{\gamma, \kappa} \Z_{\ge 0} [q]\,( S_\gamma \otimes Q_{\kappa} )$;
\item We have $s_{\la} \cdot Q^{\vee}_\mu \in \sum_{\gamma} \Z_{\ge 0} [q]\,Q^{\vee}_\nu$. In case $\la = (1^n)$, it is the Pieri rule.
\end{enumerate}
\end{cor}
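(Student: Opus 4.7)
The plan is to translate Theorem \ref{indres} directly into identities in $\La_q$ using the twisted Frobenius characteristic $\Psi$ of (\ref{gFr}), which by Lemma \ref{tFr} intertwines induction/restriction with product/coproduct, and which sends $[P_\la] \mapsto s_\la$ (Proposition \ref{idform}), $[\widetilde{K}_\la] \mapsto Q^\vee_\la$ (the corollary following Proposition \ref{selfex}, combined with $\widetilde{K}'_\la = \widetilde{K}_\la$ from Theorem \ref{main}), $[K_\la] \mapsto Q_\la$ by (\ref{PsiK}), and $[L_\la] \mapsto S_\la$ by the definition of $\Psi$.

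For part (1), I apply Theorem \ref{indres} (1) to conclude that $\mathrm{res}_{r,n-r}\,K_\la$ admits a $\overline{\Delta}$-filtration for every $0 \le r \le n$. Hence its class in $[\cA \boxtimes \cA]$ is a $\Z_{\ge 0}[q]$-linear combination $\sum_{\gamma,\kappa} c_r^{\gamma,\kappa}(q)\,[L_\gamma \boxtimes K_\kappa]$; polynomiality in $q$ (rather than Laurent polynomiality or power series) results because every module in sight is non-negatively graded and only finitely many pairs $(\gamma,\kappa)$ with $|\gamma|+|\kappa|=n$ contribute. Summing over $r$ and applying $\Psi \boxtimes \Psi$ via Lemma \ref{tFr}, the left-hand side yields $\Delta(Q_\la)$ while the right-hand side yields a non-negative polynomial combination of $S_\gamma \otimes Q_\kappa$, which is precisely (1).

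For part (2), I apply Theorem \ref{indres} (2) to conclude that $\mathrm{ind}_{r,n-r}(P_\mu \boxtimes \widetilde{K}_\nu)$ admits a $\Delta$-filtration for each $\mu \in \cP_r$, $\nu \in \cP_{n-r}$. Its Grothendieck class therefore lies in $\sum_\gamma \Z_{\ge 0}[q]\,[\widetilde{K}_\gamma]$. Applying $\Psi$ and invoking Lemma \ref{tFr} together with $\Psi([P_\mu]) = s_\mu$ and $\Psi([\widetilde{K}_\gamma]) = Q^\vee_\gamma$, the left-hand side becomes $s_\mu \cdot Q^\vee_\nu$, yielding the desired non-negative polynomial expansion in the basis $\{Q^\vee_\gamma\}_\gamma$. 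The identification with the Pieri rule in the case $\mu = (1^n)$ follows because $s_{(1^n)} = e_n$ and the classical Pieri formula for Hall-Littlewood functions (Macdonald \cite[III (5.7$'$)]{Mac95}) gives the unique expansion of $e_n \cdot Q^\vee_\nu$ in $\{Q^\vee_\gamma\}_\gamma$.

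There is essentially no serious obstacle once Theorem \ref{indres} is in hand: the entire argument is bookkeeping with $\Psi$ and the associated graded of the filtrations supplied by Theorem \ref{indres}. The only delicate point worth flagging is ensuring that coefficients lie in $\Z_{\ge 0}[q]$, which follows because $P_\mu$, $\widetilde{K}_\nu$, $L_\gamma$, and $K_\kappa$ are all non-negatively graded and generated in degree zero, so every grading shift $\langle m \rangle$ occurring in the relevant filtrations satisfies $m \ge 0$.
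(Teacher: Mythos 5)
Your proposal follows exactly the paper's route: translate Theorem \ref{indres} through the twisted Frobenius characteristic $\Psi$, using Lemma \ref{tFr} (monoidality of $\Psi$) and the dictionary $[P_\la] \mapsto s_\la$, $[\widetilde{K}_\la] \mapsto Q^\vee_\la$, $[K_\la] \mapsto Q_\la$, $[L_\la] \mapsto S_\la$, then identify the $\la = (1^n)$ case with the Pieri rule via $s_{(1^n)} = e_n = Q^\vee_{(1^n)}$. The dictionary is assembled correctly and the bookkeeping in both parts is sound.

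The one substantive gap is in the final paragraph, where you claim that non-negativity of the grading shifts $\langle m\rangle$ already yields coefficients in $\Z_{\ge 0}[q]$. For part (1) this is fine, since a $\overline{\Delta}$-filtration is by definition finite, so there are only finitely many subquotients and the coefficients are genuine polynomials. For part (2), however, a $\Delta$-filtration is a separable but possibly infinite decreasing filtration, so $m \ge 0$ only places the multiplicities in $\Z_{\ge 0}[\![q]\!]$. To get polynomiality one needs an extra finiteness input: for instance, Corollary \ref{mult} identifies the multiplicity of $\widetilde{K}_\gamma$ in $M := \mathrm{ind}_{r,n-r}(P_\mu\boxtimes\widetilde{K}_\nu)$ with a graded $\mathrm{hom}$-space into $K_\gamma$, which is finite-dimensional because $K_\gamma$ is finite-dimensional and $M$ is finitely generated (one can also pass this through Frobenius reciprocity, Theorem \ref{fnr}). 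This is precisely the subtlety that Remark \ref{HLrem} flags as nontrivial; the paper's own proof is terse and leaves it implicit, but since you explicitly advertise the polynomiality as addressed, the incomplete justification should be noted.

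A minor bibliographic point: the paper locates the Hall--Littlewood Pieri rule at \cite[I\!I\!I (3.2)]{Mac95} and the identity $e_n = Q^\vee_{(1^n)}$ at \cite[I\!I\!I (2.8)]{Mac95}, whereas you cite a skew-function formula from \S 5; either is serviceable, but the paper's reference is the more direct one.
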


\begin{proof}
Apply the twisted Frobenius characteristic to Theorem \ref{indres} using Lemma \ref{tFr}. Here the equality $s_{(1^n)} = Q^{\vee}_{(1^n)}$ is in \cite[I\!I\!I (2.8)]{Mac95} and the Pieri rule is in \cite[I\!I\!I (3.2)]{Mac95}.
\end{proof}

\begin{cor}\label{skewpos}
The skew Hall-Littlewood $Q$-function $Q_{\la/\nu}$ expands positively with respect to the big Schur function. In addition, we have a graded $A_{|\la| - |\nu|}$-module defined as
$$\mathrm{hom}_{A_{|\nu|}} ( \widetilde{K}_{\nu}, K_{\la}^* )^*,$$
such that its image under $\Psi$ is $Q_{\la/\nu}$.
\end{cor}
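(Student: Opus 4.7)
The plan is to define $Q_{\la/\nu}$ via the standard coproduct rule $\Delta(Q_\la) = \sum_\nu Q_{\la/\nu} \otimes Q_\nu$, using that $\{Q_\nu\}$ is a $\Q (\!(q)\!)$-basis of $\La_q$. The categorified coproduct (Lemma \ref{tFr}) gives $\Delta(Q_\la) = \sum_r (\Psi \times \Psi)([\mathrm{res}_{r,n-r} K_\la])$ where $n = |\la|$, and Theorem \ref{indres} 1) supplies a $\overline{\Delta}$-filtration of each $\mathrm{res}_{r,n-r} K_\la$ with successive quotients $L_\mu \boxtimes K_\kappa$ of multiplicity $c^{(r)}_{\mu,\kappa}(q) \in \Z_{\ge 0}[q]$. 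Applying $\Psi \times \Psi$ (with $\Psi([L_\mu]) = S_\mu$ and $\Psi([K_\kappa]) = Q_\kappa$) and comparing coefficients of $\otimes Q_\nu$ yields $Q_{\la/\nu} = \sum_\mu c^{(n-|\nu|)}_{\mu,\nu}(q)\, S_\mu$, which is the desired positive expansion in the big Schur basis.

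For the module realization, set $r = n-|\nu|$ and $m = |\nu|$, so that $K_\la$ restricts to an $A_r \boxtimes A_m$-module. Dualizing the $\overline{\Delta}$-filtration yields a filtration of $\mathrm{res}_{r,m} K_\la^*$ with quotients $L_\mu \boxtimes K_\kappa^*$ (the grading shifts reversed). The core computation is that $L_\mu \boxtimes K_\kappa^*$, restricted to $A_m$ alone, is isomorphic to $L_\mu \otimes \underline{K_\kappa^*}$ (with $L_\mu$ viewed as a graded vector space concentrated in degree zero), so Theorem \ref{main} 3) applied to the algebra $A_m$ gives $\mathrm{hom}_{A_m}(\widetilde{K}_\nu, L_\mu \boxtimes K_\kappa^*) \cong \delta_{\nu,\kappa}\, L_\mu$ and $\mathrm{ext}^{>0}_{A_m}(\widetilde{K}_\nu, L_\mu \boxtimes K_\kappa^*) = 0$ as graded $A_r$-modules (the $A_r$-action persisting through the first factor $L_\mu$). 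The ext-vanishing makes $\mathrm{hom}_{A_m}(\widetilde{K}_\nu, -)$ exact on this filtration, so $\mathrm{hom}_{A_m}(\widetilde{K}_\nu, \mathrm{res}_{r,m} K_\la^*)$ acquires an $A_r$-module filtration whose nonzero successive quotients are the $L_\mu$'s coming from the pieces with $\kappa = \nu$ (with reversed grading shifts). The final dualization $(-)^*$ restores the original grading shifts, yielding $[\mathrm{hom}_{A_m}(\widetilde{K}_\nu, K_\la^*)^* : L_\mu]_q = c^{(r)}_{\mu,\nu}(q)$, so $\Psi$ of this module equals $\sum_\mu c^{(r)}_{\mu,\nu}(q)\, S_\mu = Q_{\la/\nu}$, as required.

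The only substantial input beyond the preceding results is the ext-vanishing from Theorem \ref{main} 3) applied at the intermediate algebra $A_m$, which is exactly what makes the inner Hom functor transport the $\overline{\Delta}$-filtration on $\mathrm{res}_{r,m} K_\la^*$ to a clean $L$-filtration on the resulting $A_r$-module. The main obstacle is then the careful tracking of the two grading reversals (one from dualizing the filtered module, one from dualizing the resulting Hom space) so the multiplicities emerge as honest polynomials in $\Z_{\ge 0}[q]$ and exactly match the coefficients read off from the coproduct expansion of $Q_\la$.
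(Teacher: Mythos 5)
Your argument is correct and follows the same route as the paper: you use Theorem \ref{indres} 1) to produce a $\overline{\Delta}$-filtration of $\mathrm{res}_{r,m} K_\la$, pass through the twisted Frobenius characteristic (Lemma \ref{tFr}) and the definition of $Q_{\la/\nu}$ via $\Delta$ and the Hall inner product to get the $S_\mu$-expansion, and then realize the skew function as $\mathrm{hom}_{A_{|\nu|}}(\widetilde{K}_\nu, K_\la^*)^*$. The only difference is cosmetic: where the paper simply invokes Corollary \ref{mult} (the Kleshchev multiplicity formula for $\overline{\Delta}$-filtered modules), you unroll that citation by applying the $\mathrm{ext}$-orthogonality of Theorem \ref{main} 3) at $A_m$ directly and tracking the two dualizations by hand — a legitimate and slightly more self-contained substitute.
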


\begin{proof}
Let $\la \in \cP_n$. The Hall-Littlewood $Q$-polynomial corresponds to the module $K_\la$ by Theorem \ref{GPHL}. Therefore, its restriction admits a $\overline{\Delta}$-filtration. In particular, we have
$$[\mathrm{res}_{r,n-r} \, K_\la] = \sum_{\mu,\nu} c_\la^{\mu,\nu} [L_\mu \boxtimes K_\nu ] \hskip 5mm c_{\la}^{\mu,\nu} \in \Z_{\ge 0} [q].$$
Here $\mathrm{res}_{r,n-r}$ corresponds to $\Delta$ in $\Lambda_q$ by Lemma \ref{tFr}. Taking \cite[I\!I\!I (5.2)]{Mac98}, (\ref{Horth}), and (\ref{PsiK}) into account, we conclude that
$$Q_{\la/\nu} = \sum_{\mu} c_\la^{\mu,\nu} \Psi ( [L_\mu] ).$$
This is the first assertion by $\Psi ( [L_\mu] ) = S_\mu$, read off from (\ref{gFr}). In view of Theorem \ref{indres} 1) and Corollary \ref{mult}, we conclude the second assertion.
\end{proof}

\medskip

{\small
{\bf Acknowledgement:} The author presented an intensive lecture course based on the contents of this paper at Nagoya University on Fall 2020. The author thanks the hospitality of Shintarou Yanagida and Nagoya University. He also thanks Kota Murakami for pointing out some typos. This research was supported in part by JSPS KAKENHI Grant Number JP19H01782.}

\printbibliography
\end{document}